\newcommand\cplus{\mathbin{\raisebox{-\height}{$+$}}}
\newcommand\contdots{\raisebox{-\height}{$\vphantom{+}\dotsm$}}
\numberwithin{equation}{section}
\theoremstyle{plain}
\newtheorem{theorem}{Theorem}[section]
\newtheorem{lemma}[theorem]{Lemma}
\newtheorem{conjecture}[theorem]{Conjecture}
\newtheorem{remark}[theorem]{Remark}
\newcommand\T{\rule{0pt}{2.6ex}}
\newcommand\B{\rule[-1.2ex]{0pt}{0pt}}
\textblockorigin{\paperwidth}{0.0 pt}
\title[Infinite $q$-products with matching coefficients] {Matching coefficients in the series expansions of certain $q$-products and their reciprocals}
\author{Nayandeep Deka Baruah}
\address{Department of Mathematical Sciences, Tezpur University, Napaam 784028, Sonitpur, Assam, India}
\email{nayan@tezu.ernet.in}
\author{Hirakjyoti Das}
\address{Department of Mathematical Sciences, Tezpur University, Napaam 784028, Sonitpur, Assam, India}
\email{hdas@tezu.ernet.in}
\begin{document}

\begin{abstract}
We show that the series expansions of certain  $q$-products have \textit{matching coefficients} with their reciprocals. Several of the results are associated to Ramanujan's continued fractions. For example, let $R(q)$ denote the Rogers-Ramanujan continued fraction having the well-known $q$-product repesentation $$R(q)=\dfrac{(q;q^5)_\infty(q^4;q^5)_\infty}{(q^2;q^5)_\infty(q^3;q^5)_\infty}.$$ If
\begin{align*}
\sum_{n=0}^{\infty}\alpha(n)q^n=\dfrac{1}{R^5\left(q\right)}=\left(\sum_{n=0}^{\infty}\alpha^{\prime}(n)q^n\right)^{-1},\\
\sum_{n=0}^{\infty}\beta(n)q^n=\dfrac{R(q)}{R\left(q^{16}\right)}=\left(\sum_{n=0}^{\infty}\beta^{\prime}(n)q^n\right)^{-1},
\end{align*}
then
\begin{align*}
\alpha(5n+r)&=-\alpha^{\prime}(5n+r-2) \quad r\in\{3,4\},\\
\beta(10n+r)&=-\beta^{\prime}(10n+r-6) \quad r\in\{7,9\}.
\end{align*}
\end{abstract}
\maketitle
\noindent{\footnotesize Key words: Matching coefficient, $q$-product, Rogers-Ramanujan continued fraction, Theta function}

\vskip 3mm
\noindent {\footnotesize 2010 Mathematical Reviews Classification Numbers: 11F33, 11A55}

\section{Introduction and results}\label{Introduction}
For  complex numbers $a$ and $q$  such that $\mid q\mid<1$, we customarily define
\begin{align*}\left(a;q\right)_n:=\prod_{j=0}^{n-1}\left(1-a q^j\right), \quad \left(a;q\right)_{\infty}:=\prod_{j=0}^{\infty}\left(1-a q^j\right).
\end{align*}
For brevity, we set $\left(a_1,a_2,\ldots,a_n;q\right)_{\infty}:=\left(a_1;q\right)_{\infty}\left(a_2;q\right)_{\infty}\cdots\left(a_n;q\right)_{\infty}$.

For two  power series $\sum_{n=0}^\infty A(n)q^n$ and $\sum_{n=0}^\infty B(n)q^n$, if for some positive integers $a,c,$ and $k$  and non-negative integers $b$ and $d$,  $A(an+b)=\pm k B(cn+d)$,  for all $n\ge0,$  then the two power series are said to have matching coefficients.

We  notice that the series expansions of certain  $q$-products have matching coefficients with their reciprocals. For example, let
$$\sum_{n=0}^{\infty}\alpha(n)q^n= \left(q;q^2\right)_{\infty}^8= \left(\sum_{n=0}^{\infty}\alpha^{\prime}(n)q^n\right)^{-1}.$$ Equating the coefficients of $q^{2n+1}$ in the expansions of the  $q$-products of Jacobi's famous identity \cite[p. 470]{ww}
\begin{align}\label{jacobi-quartic}
\left(q;q^2\right)_{\infty}^8-\left(-q;q^2\right)_{\infty}^8=-16q\left(q^2;q^4\right)_{\infty}^{-8},
\end{align}
one can arrive at
\begin{align*}
\alpha(2n+1)=-8\alpha^\prime(n).
\end{align*}

Again, cubing \eqref{jacobi-quartic}, we find that
\begin{align*}
\left(q;q^2\right)_{\infty}^{24}-\left(-q;q^2\right)_{\infty}^{24}=-4096q^3\left(q^2;q^4\right)_{\infty}^{-24}-48q,
\end{align*}
from which, it follows that
\begin{align*}
\beta(2n+3)=-2048\beta^\prime(n),
\end{align*}
where
$$\sum_{n=0}^{\infty}\beta(n)q^n= \left(q;q^2\right)_{\infty}^{24}= \left(\sum_{n=0}^{\infty}\beta^{\prime}(n)q^n\right)^{-1}.$$

 In this paper, we find many such $q$-products having matching coefficients with their reciprocals. The results mainly arise from Ramanujan's theta function identities, modular equations, and identities associated to Ramanujan's continued fractions. In the process, we also find some new identities for the Rogers-Ramanujan type functions.

In the following subsections we present our results. For convenience,  if $A(n)$  is the $n$-th coefficient of the series expansion of a $q$-product, then  the corresponding $n$-th coefficient for the  reciprocal is denoted by $A^\prime(n)$.

\subsection{Matching coefficients arising from Ramanujan's theta functions}
Ramanujan's general theta function $f(a,b)$ is defined by
\begin{align}\label{jtpi}
f(a,b):=&\sum_{n=-\infty}^{{\infty}}a^{n(n+1)/2}b^{n(n-1)/2}=(-a,-b,ab;ab)_{\infty}, \quad \mid ab \mid<1,
\end{align}
where the last equality is Jacobi's famous triple product identity. Three special cases of $f(a,b)$ are
\begin{align*}
f(-q)&:=f(-q,-q^{2})=f_1, \quad
\varphi(q):=f(q,q)=\dfrac{f_2^5}{f_1^2f_4^2}, \quad
\psi(q):=f\left(q,q^3\right)=\dfrac{f_2^2}{f_1},
\end{align*}
where for convenience, we set $f_n:=\left(q^n;q^n\right)_\infty$.

The following results arise from the $p$-dissections of $f(-q)$, $\varphi(q)$, and $\psi(q)$, where for a power series  $P(q)$ in $q$,  a \textit{$p$-dissection} of $P(q)$ is given by
\begin{align*}
P(q)= \sum_{j=0}^{p-1}q^{j}P_j\left(q^p\right),
\end{align*}
where $P_j$'s are power series in $q^p$.

\begin{theorem}\label{Theo_1}
For odd primes $p$, suppose that
\begin{align*}
&\sum_{n=0}^{\infty}\gamma_{1}(n)q^n=\left(q,q^2,\ldots,q^{p-1};q^{p}\right)_{\infty},\quad\sum_{n=0}^{\infty}\gamma_2(n)q^n=\dfrac{\left(q,q^2,\ldots,q^{p-1};q^{p}\right)_{\infty}^{2}}{\left(q^2,q^4,\ldots,q^{2(p-1)};q^{2p}\right)_{\infty}},\\
&\sum_{n=0}^{\infty}\gamma_3(n)q^n=\dfrac{\left(q^2,q^4,\ldots,q^{2(p-1)};q^{2p}\right)_{\infty}^{2}}{\left(q,q^2,\ldots,q^{p-1};q^{p}\right)_{\infty}}.
\end{align*}
 Then, for any $n\geq 0$, we have
\begin{align*}
\textup{(i)}& ~\gamma_{1}\left(pn+\dfrac{p^2-1}{24}\right)=(-1)^{\frac{p\pm1}{6}}\gamma_{1}^{\prime}(n), \quad where~ p>3;
\quad\textup{(ii)} ~\gamma_2(pn)=\gamma^{\prime}_{2}(n);\\
 \quad\textup{(iii)}& ~\gamma_3\left(pn+\dfrac{p^2-1}{8}\right)=\gamma^{\prime}_{3}(n),
\end{align*}
where the  sign $\pm$ in (i) should be chosen according to $(p\pm1)/6$ being an integer.
\end{theorem}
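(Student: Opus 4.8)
The plan is to recognize each of the three products as a theta quotient in which the denominator is a power series in $q^p$, so that extracting a residue class modulo $p$ reduces to a single residue class of a classical theta series. First I would record the factorization $\left(q,q^2,\ldots,q^{p-1};q^p\right)_\infty=f_1/f_p$, obtained by splitting $f_1=(q;q)_\infty$ according to residues modulo $p$, together with its analogue $\left(q^2,q^4,\ldots,q^{2(p-1)};q^{2p}\right)_\infty=f_2/f_{2p}$. Combining these with the standard evaluations $\varphi(-q)=f_1^2/f_2$ and $\psi(q)=f_2^2/f_1$ rewrites the three generating functions as
\begin{align*}
\sum_{n\ge0}\gamma_1(n)q^n=\frac{f_1}{f_p},\qquad\sum_{n\ge0}\gamma_2(n)q^n=\frac{\varphi(-q)}{\varphi(-q^p)},\qquad\sum_{n\ge0}\gamma_3(n)q^n=\frac{\psi(q)}{\psi(q^p)}.
\end{align*}
Since each denominator is a series in $q^p$, it is constant on residue classes modulo $p$; hence the residue-$r$ part of each quotient is the residue-$r$ part of its numerator divided by the unchanged denominator.

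Thus everything reduces to a single distinguished residue class in the $p$-dissections of $f_1$, $\varphi(-q)$, and $\psi(q)$. I would use the series $f_1=\sum_k(-1)^kq^{k(3k-1)/2}$, $\varphi(-q)=\sum_n(-1)^nq^{n^2}$, and $\psi(q)=\sum_n q^{n(2n-1)}$, for which the exponents $E$ satisfy $24E+1=(6k-1)^2$, $E=n^2$, and $8E+1=(4n-1)^2$, respectively. For $p\ge5$ the relevant linear forms $6k-1$, $n$, $4n-1$ are invertible modulo $p$ and so run over all residues; consequently each residue class of $E$ is attained by two summation indices, except for the unique class in which the linear form vanishes modulo $p$, where it is attained by one. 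These exceptional classes are exactly $r\equiv(p^2-1)/24$ for $\gamma_1$ (needing $p>3$ for integrality), $r=0$ for $\gamma_2$, and $r\equiv(p^2-1)/8$ for $\gamma_3$. Collecting the lone surviving arithmetic progression and re-summing, I expect the residue-class component of the numerator to collapse to $\varepsilon\,q^{r}\,T(q^{p^2})$, where $T$ is the same theta function and $\varepsilon=\pm1$; dividing by the denominator and relabelling $q^p\mapsto q$ converts $T(q^{p^2})/T(q^p)$ into the reciprocal generating function, which is precisely the asserted matching of coefficients.

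The sign $\varepsilon$ is forced to be $+1$ in (ii) and (iii): for $\gamma_2$ the class $r=0$ keeps only $n\equiv0\pmod p$ and gives $\sum_m(-1)^{pm}q^{p^2m^2}=\varphi(-q^{p^2})$ because $p$ is odd, while for $\gamma_3$ the indices with $4n-1\equiv0\pmod p$ re-sum to $q^{(p^2-1)/8}\psi(q^{p^2})$ with all coefficients $+1$. The genuinely delicate point, and the step I expect to be the main obstacle, is the sign in (i). Writing $k_0$ for the central index with $6k_0\equiv1\pmod p$, one finds $6k_0-1=\pm p$ according as $p\equiv\mp1\pmod6$, so $k_0=(1\pm p)/6$; I must verify both that the residual quadratic in the re-summation reduces to an honest pentagonal series (which is where the value $s=(6k_0-1)/p=\pm1$ enters) and that the resulting prefactor $(-1)^{k_0}$ equals $(-1)^{(p\pm1)/6}$ with the sign convention of the theorem. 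Finally, when $(p^2-1)/24$ or $(p^2-1)/8$ exceeds $p$, the offset must be reduced modulo $p$; this is routine once the residue-class component is known to begin exactly at the predicted power of $q$, since that shows no lower terms in the class are lost.
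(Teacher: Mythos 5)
Your proposal is correct and follows essentially the same route as the paper: write the three products as $f_1/f_p$, $\varphi(-q)/\varphi(-q^p)$, $\psi(q)/\psi(q^p)$, observe that each denominator is a series in $q^p$, and use the $p$-dissections of $f_1$, $\varphi$, $\psi$ in which the distinguished residue class ($\frac{p^2-1}{24}$, $0$, $\frac{p^2-1}{8}$) is hit by exactly one term, equal to $\varepsilon q^{r}$ times the same theta function in $q^{p^2}$. The only difference is that the paper cites these dissections (Cui--Gu for $f_1$ and $\psi$, Baruah--Nath for $\varphi$) together with the incongruence conditions on the remaining exponents, whereas you rederive them from the classical series via the completed squares $24E+1=(6k-1)^2$, $E=n^2$, $8E+1=(4n-1)^2$ — a self-contained but equivalent argument.
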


The 3-, 5-, 7-, and 11-dissections of $f_1^k$, for $k\geq1$ give further $q$-products having matching coefficients with their reciprocals. We state some of the results in the following theorem.

\begin{theorem}\label{ExtraTheo1}
For integers  $\ell>1, m>1$, and $k\ge1$, suppose that
\begin{align*}
\sum_{n=0}^{\infty}\delta_{m;k}(n)q^n&=\dfrac{f_1^k}{f_m^k},&
\sum_{n=0}^{\infty}\delta_{\ell,m;k}(n)q^n&=\dfrac{f_1^k f_{\ell}^k}{f_m^k f_{\ell m}^k}.
\end{align*}
Then, for all $n\geq 0$, we have
\begin{align*}
&\textup{(i)} & &\delta_{3;3}\left(3n+1\right)=-3\delta_{3;3}^{\prime}(n);&
&\textup{(ii)} & &\delta_{3;6}\left(3n+2\right)=9\delta_{3;6}^{\prime}(n);\\
&\textup{(iii)} & &\delta_{5;2}\left(5n+2\right)=-\delta_{5;2}^{\prime}(n); &
&\textup{(iv)} & &\delta_{5;3}\left(5n+3\right)=5\delta_{5;3}^{\prime}(n);\\
&\textup{(v)} & &\delta_{5;4}\left(5n+4\right)=-5\delta_{5;4}^{\prime}(n);&
&\textup{(vi)} & &\delta_{5;8}\left(5n+3\right)=-125\delta_{5;8}^{\prime}(n);\\
&\textup{(vii)} & &\delta_{7;2}\left(7n+4\right)=\delta_{7;2}^{\prime}(n);&
&\textup{(viii)} & &\delta_{7;3}\left(7n+6\right)=-7\delta_{7;3}^{\prime}(n);\\
&\textup{(ix)} & &\delta_{7;6}\left(7n+12\right)=49\delta_{7;6}^{\prime}(n); &
&\textup{(x)} & &\delta_{11;2}\left(11n+10\right)=\delta_{11;2}^{\prime}(n);\\
&\textup{(xi)} & &\delta_{2,3;1}\left(3n+1\right)=-\delta_{2,3;1}^{\prime}(n); &
&\textup{(xii)} & &\delta_{2,3;2}\left(3n+2\right)=-3\delta_{2,3;2}^{\prime}(n);\\
&\textup{(xiii)} & &\delta_{2,3;5}\left(3n+5\right)=-81\delta_{2,3;5}^{\prime}(n);&
&\textup{(xiv)} & &\delta_{2,5;1}\left(5n+3\right)=\delta_{2,5;1}^{\prime}(n);\\
&\textup{(xv)} & &\delta_{2,5;3}\left(5n+9\right)=25\delta_{2,5;3}^{\prime}(n);&
&\textup{(xvi)} & &\delta_{3,5;1}\left(5n+4\right)=\delta_{3,5;1}^{\prime}(n);\\
&\textup{(xvi)} & &\delta_{3,5;2}\left(5n+8\right)=-5\delta_{3,5;2}^{\prime}(n);&
&\textup{(xviii)} & &\delta_{3,5;3}\left(5n+12\right)=25\delta_{3,5;3}^{\prime}(n);\\
&\textup{(xix)} & &\delta_{2,7;1}\left(7n+6\right)=\delta_{2,7;1}^{\prime}(n); &
&\textup{(xx)} & &\delta_{4,7;1}\left(7n+10\right)=\delta_{4,7;1}^{\prime}(n);\\
&\textup{(xxi)} & &\delta_{3,11;1}\left(11n+20\right)=\delta_{3,11;1}^{\prime}(n); &
&\textup{(xxii)} & &\delta_{4,11;1}\left(11n+25\right)=\delta_{4,11;1}^{\prime}(n).
\end{align*}
\end{theorem}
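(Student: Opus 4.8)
The plan is to convert each of the twenty-two assertions into a single eta-product (theta-function) identity and then to prove that identity by an $m$-dissection. The observation that drives the reduction is that the denominators occurring in $\delta_{m;k}$ and $\delta_{\ell,m;k}$ are already power series in $q^m$: both $f_m^k=(q^m;q^m)_\infty^k$ and $f_{\ell m}^k=(q^{\ell m};q^{\ell m})_\infty^k$ are invariant under the $m$-dissection and simply factor out (recall $m\in\{3,5,7,11\}$ is prime in every case). Writing $F(q)=\sum\delta(n)q^n$ for the stated product and $G(q)=1/F(q)=\sum\delta'(n)q^n$ for its reciprocal, the claim $\delta(mn+b)=\pm k'\,\delta'(n)$ is equivalent to
\begin{align*}
\sum_{n\ge0}\delta(mn+b)\,q^{mn+b}=\pm k'\,q^{b}\,G(q^{m}).
\end{align*}
Since $G(q^{m})$ is the reciprocal product rescaled by $q\mapsto q^{m}$ (for instance $f_m^k/f_1^k$ becomes $f_{m^2}^k/f_m^k$), and since $f_m^k$ cancels, it suffices in the one-parameter case to prove the \emph{purely multiplicative} statement that the residue-$(b\bmod m)$ component of the $m$-dissection of $f_1^k$ equals $\pm k'\,q^{b}f_{m^2}^k$; in the two-parameter case the analogous target is $\pm k'\,q^{b}f_{m^2}^k f_{\ell m^2}^k$, the residue component of $f_1^k f_\ell^k$.

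The tools for these dissection identities are classical. Euler's pentagonal number theorem $f_1=\sum_n(-1)^n q^{n(3n-1)/2}$ and Jacobi's identity $f_1^3=\sum_{n\ge0}(-1)^n(2n+1)q^{n(n+1)/2}$ settle the lowest cases directly. For part (i), for example, collecting the terms of $f_1^3$ of exponent $\equiv1\pmod3$ (which come from $n\equiv1\pmod3$) and using $(-1)^{3j+1}(6j+3)=-3(-1)^j(2j+1)$ together with $9j(j+1)/2\equiv0\pmod3$ gives the residue-$1$ component $-3q\sum_j(-1)^j(2j+1)q^{9j(j+1)/2}=-3q\,f_9^3$, whence $\delta_{3;3}(3n+1)=-3\delta_{3;3}'(n)$. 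The remaining one-parameter cases with $m=5,7,11$ and larger $k$ follow in the same way once one has the relevant $m$-dissection of $f_1^k$; these are available from Ramanujan's work and its modern treatments (via the Jacobi triple product, the quintuple product identity, and the modular equations of prime degree $5,7,11$). In each case one reads off the prescribed residue component and checks that it collapses to the single term $\pm k'\,q^{b}f_{m^2}^k$.

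For the two-parameter families $\delta_{\ell,m;k}$ I would obtain the $m$-dissection of $f_1^k f_\ell^k$ by multiplying the $m$-dissection of $f_1^k$ with that of $f_\ell^k$ (the latter obtained from the former by $q\mapsto q^\ell$), or, more efficiently, by invoking the modular equation of degree $\ell$ relating $f_1$ and $f_\ell$; extracting the chosen residue then yields the target product $\pm k'\,q^{b}f_{m^2}^k f_{\ell m^2}^k$. Throughout, the arithmetic of the exponents (triangular and pentagonal numbers modulo $m$) guarantees that any terms sharing the residue $b\bmod m$ but lying below $q^{b}$ have vanishing coefficient, so that the residue component and the progression $mn+b$ coincide exactly.

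The main obstacle is not the reduction but the verification, case by case, that the selected arithmetic progression isolates \emph{exactly one} surviving summand: one must show that all cross terms in the dissection cancel, leaving a pure power of $f_{m^2}$ (or $f_{m^2}f_{\ell m^2}$). This cancellation is transparent for $f_1$ and $f_1^3$, but for the heavier cases—$\delta_{5;8}$, $\delta_{7;6}$, and especially $\delta_{11;2}$—the full $m$-dissection has many terms, and establishing the collapse requires the explicit modular equations of the corresponding degree together with some nontrivial theta-function bookkeeping. The two-parameter identities compound the difficulty because two theta functions must be dissected and matched simultaneously; I expect these, along with the $m=11$ identity, to be where the real work lies.
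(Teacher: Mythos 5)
Your reduction---factor out the $q^m$-invariant denominators $f_m^k$ (resp.\ $f_m^k f_{\ell m}^k$) and show that the residue-$b$ component of the $m$-dissection of $f_1^k$ (resp.\ $f_1^k f_\ell^k$) collapses to $\pm k' q^{b} f_{m^2}^k$ (resp.\ $\pm k' q^{b} f_{m^2}^k f_{\ell m^2}^k$)---is exactly the paper's method, which divides the known $3$-dissections of $f_1^3$ and $f_1 f_2$ and the Cui--Gu $p$-dissection of $f_1$ by the denominator, takes powers, and applies $U_{mn+b}$; your worked case (i) via Jacobi's identity coincides with theirs. Both you and the paper treat the remaining cases as ``similar'' case-by-case bookkeeping, so the proposal matches the paper in strategy and in level of detail.
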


\subsection{Matching coefficients arising from  modular equations}

The following results arise from Ramanujan's modular equations of degrees 3, 5, 7, 9, 11, 15, 23, and 25.

\begin{theorem}\label{ExtraTheorem1}
For positive integers  $m$ and $k$, suppose that
\begin{align*}
&\sum_{n=0}^{\infty}\mu_{m,k}(n)q^n=\dfrac{\left(q^k;q^{2k}\right)_{\infty}^{mk}}{\left(q;q^2\right)_{\infty}^m}, \quad\quad\sum_{n=0}^{\infty}\nu_{m,k}(n)q^n=\dfrac{\left(q;q^2\right)_{\infty}^{mk}}{\left(q^k;q^{2k}\right)_{\infty}^m},\\
&\sum_{n=0}^{\infty}\eta_{k}(n)q^n=\dfrac{\left(q;q^2\right)_{\infty}^{3}\left(q^9;q^{18}\right)_{\infty}^{3}}{\left(q^3;q^{6}\right)_{\infty}^{2k}},\quad \sum_{n=0}^{\infty}\zeta_{m,k}(n)q^n=\left(q;q^{2}\right)_{\infty}^k\left(q^m;q^{2m}\right)_{\infty}^k,\\
&\sum_{n=0}^{\infty}\theta_1(n)q^n=\left(q;q^{2}\right)_{\infty}\left(q^3;q^{6}\right)_{\infty}\left(q^5;q^{10}\right)_{\infty}\left(q^{15};q^{30}\right)_{\infty},\\
&\sum_{n=0}^{\infty}\theta_2(n)q^n=\dfrac{\left(q;q^{2}\right)_{\infty}}{\left(q^{25};q^{50}\right)_{\infty}}.
\end{align*}
Then, for any $n\geq0$, we have
\begin{align*}
&\textup{(i)} & &\mu_{1,3}(2n+1)=\mu^\prime_{1,3}(n); & &\textup{(ii)} & &\mu_{3,3}(2n+3)=4\mu^{\prime}_{3,3}(n);\\
&\textup{(iii)} & &\mu_{1,5}(2n+3)=2\mu^{\prime}_{1,5}(n); & &\textup{(iv)} & &\nu_{1,3}(2n)=\nu^{\prime}_{1,3}(n);\\
&\textup{(v)} & & \nu_{1,5}(2n+2)=2\nu^{\prime}_{1,5}(n+1); & &\textup{(vi)} & &\nu_{3,3}(2n)=4\nu^{\prime}_{3,3}(n);\\
&\textup{(vii)} & & \eta_{1}(2n+3)=-2\eta^{\prime}_{}(n); & &\textup{(viii)} & &2\eta_{5}(n)=-\eta^{\prime}_{5}(2n);\\
&\textup{(ix)} & &\zeta_{3,2}(2n+1)=-2\zeta^{\prime}_{3,2}(n); &
&\textup{(x)} & &\zeta_{5,4}(2n+3)=-8\zeta^{\prime}_{5,4}(n);\\
&\textup{(xi)} & &\zeta_{7,1}(2n+1)=-\zeta^{\prime}_{7,1}(n); &
&\textup{(xii)} & &\zeta_{7,3}(2n+3)=-4\zeta^{\prime}_{7,3}(n);\\
&\textup{(xiii)} & &\zeta_{11,2}(2n+3)=-2\zeta^{\prime}_{11,2}(n); &
&\textup{(xiv)} & &\zeta_{23,1}(2n+3)=-\zeta^{\prime}_{23,1}(n);\\
&\textup{(xv)} & &\theta_1(2n+3)=-2\theta_1^{\prime}(n); &
&\textup{(xvi)} & &2\theta_2(10n+3)=-\theta_2^{\prime}(5n+3);\\
&\textup{(xvii)} & &2\theta_2(10n+5)=-\theta_2^{\prime}(5n+4).
\end{align*}
\end{theorem}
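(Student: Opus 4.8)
The plan is to turn each matching-coefficient assertion into a single $q$-series identity and then to recognize that identity as a restatement of a classical modular equation. Throughout, write $P(q)=\sum_{n\ge0}A(n)q^n$ for the $q$-product in question and $1/P(q)=\sum_{n\ge0}A'(n)q^n$ for its reciprocal, and note the elementary fact that dilating the reciprocal gives $1/P(q^2)=\sum_{n\ge0}A'(n)q^{2n}$, so that the even progression of $A'$ sits inside $1/P(q^2)$ and the odd one inside $q/P(q^2)$. For every part whose left-hand index is $2n+b$ with $b\in\{1,3\}$ --- that is, (i)--(iii), (vii), and (ix)--(xv) --- extracting the odd part of $P$ shows that the claimed relation $A(2n+b)=cA'(n)$ is \emph{equivalent} to an identity of the form
\[
P(q)-P(-q)=2c\,q^{b}\,\frac{1}{P(q^2)}+\big(\text{correction terms of order}<q^{b}\big),
\]
where the stray terms (for instance $2A(1)q$ when $b=3$) are read off directly from $P$, exactly as in the cubed Jacobi identity of the Introduction. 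The even-indexed parts (iv)--(vi) are treated identically through $P(q)+P(-q)$, and (viii) is nothing but this same identity read for the reciprocal series, namely $1/P(q)+1/P(-q)=-4\,P(q^2)$.

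I would then rewrite every product through Ramanujan's functions via $(q;q^2)_\infty=f_1/f_2$, $\varphi(-q)=f_1^2/f_2$, and $\psi(q)=f_2^2/f_1$, so that each target identity becomes a relation among $\varphi(\pm q^j)$ and $\psi(\pm q^j)$ for the moduli $j$ present in the product. Since $q\mapsto-q$ sends $\varphi(-q)\mapsto\varphi(q)$ and $\psi(q)\mapsto\psi(-q)=f_1f_4/f_2$, the symmetric and antisymmetric combinations $P(q)\pm P(-q)$ collapse onto the two sides of a modular equation whose degree is dictated by the ratio of moduli: the $f_3,f_6$ products $\mu_{1,3},\mu_{3,3},\nu_{1,3},\nu_{3,3},\zeta_{3,2}$ call for the degree-$3$ equation, the $f_5$ products $\mu_{1,5},\nu_{1,5},\zeta_{5,4}$ for degree $5$, and likewise $\zeta_{7,1},\zeta_{7,3}$ for degree $7$, $\eta$ for degree $9$, $\zeta_{11,2}$ for degree $11$, $\theta_1$ for degree $15$, and $\zeta_{23,1}$ for degree $23$. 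Collecting these modular equations from Ramanujan's notebooks and Berndt's volumes, the extraction above finishes (i)--(xv), the precise sign and constant $c$ falling out of the numerical coefficients in the modular equation.

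The two $\theta_2$ statements (xvi)--(xvii) are structurally different, because the left progression has modulus $10$ while the right has modulus $5$; the plain even/odd split no longer suffices. Here I would instead perform a full $5$-dissection of $(q;q^2)_\infty=f_1/f_2$ and follow how the factor $1/(q^{25};q^{50})_\infty$, being a series in $q^{25}$, redistributes the residue classes, the governing input being the degree-$25$ modular equation together with the $5$-dissection of the associated Rogers--Ramanujan type functions.

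I expect the genuine obstacle to lie in the high-degree cases. The degree-$15$, degree-$23$, and degree-$25$ relations underlying $\theta_1$, $\zeta_{23,1}$, and $\theta_2$ are far less compact and less readily available than the small-degree modular equations, so the real work will be to establish them --- most plausibly by first proving auxiliary identities for the Rogers--Ramanujan type functions (the ``new identities'' promised in the Introduction) and then specializing. Once those are in place, each conclusion follows by the routine even/odd --- or, for $\theta_2$, $5$-/$10$-fold --- coefficient extraction already described.
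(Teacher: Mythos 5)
Your proposal follows essentially the same route as the paper: each part is reduced, via the even/odd split $P(q)\pm P(-q)$, to a $q$-product identity that is the transcription of a Ramanujan modular equation of the degree you correctly identify (3, 5, 7, 9, 11, 15, 23, and, for $\theta_2$, 25 combined with a $5$-dissection of $f_1$), exactly as in the paper's worked example for (v) using the degree-$5$ equation. The only difference is logistical rather than mathematical: the paper does not re-derive the high-degree transcriptions you flag as the ``genuine obstacle'' but simply cites them from Baruah--Berndt, Berndt, and Baruah--Boruah.
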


Note that
\begin{align*}
\sum_{n=0}^\infty\mu_{1,3}(n)q^n=\dfrac{\left(q^3;q^{6}\right)_{\infty}^{3}}{\left(q;q^2\right)_\infty}=\dfrac{q^{1/3}}{\mathcal{C}(q)},
\end{align*}
where $\mathcal{C}(q)$ is Ramanujan's cubic continued fraction defined by (see \cite[p. 345]{B1})
\begin{align*}
\mathcal{C}(q):=\frac{q^{1/3}}{1} \cplus  \frac{q+q^2}{1}\cplus\frac{q^2+q^4}{1}\cplus  \frac{q^3+q^6}{1}\cplus\contdots.
\end{align*}

\subsection{Matching coefficients arising from  identities associated to Ramanujan's continued fractions and Rogers-Ramanujan type functions}

The famous Rogers-Ramanujan continued fraction $R(q)$ is defined by
\begin{align*}
R(q):=\dfrac{1}{1} \cplus  \frac{q}{1}\cplus\frac{q^2}{1}\cplus  \frac{q^3}{1}\cplus\contdots
\end{align*}
and the Rogers-Ramanujan identities are given by
\begin{align*}
G(q)&:= \sum_{n=0}^{\infty}\dfrac{q^{n^2}}{(q,q)_{n}}=\dfrac{1}{(q,q^4;q^5)_{\infty}}, & H(q)&:= \sum_{n=0}^{\infty}\dfrac{q^{n^2+n}}{(q,q)_{n}}=\dfrac{1}{(q^2,q^3;q^5)_{\infty}},
\end{align*}
where $G(q)$ and $H(q)$ are called the Rogers-Ramanujan functions.
It is well-known that $R(q)$, $G(q)$, and $H(q)$ are connected by
\begin{align}\label{R-GH}
R(q)=\dfrac{H(q)}{G(q)}=\dfrac{(q,q^4,;q^5)_{\infty}}{(q^2,q^3;q^5)_{\infty}}.
\end{align}

The following results on matching coefficients arise from the modular relations involving  $G(q)$ and $H(q)$.

\begin{theorem}\label{Theo5_1} Suppose that
\begin{align*}
\sum_{n=0}^{\infty}\lambda_1(n)q^n&=\dfrac{1}{R^5\left(q\right)},~  \sum_{n=0}^{\infty}\lambda_2(n)q^n=\dfrac{R^5\left(q\right)}{R\left(q^5\right)},~
 \sum_{n=0}^{\infty}\lambda_3(n)q^n=\dfrac{R\left(q^2\right)}{R^2\left(q\right)},\\
\sum_{n=0}^{\infty}\lambda_4(n)q^n&=\dfrac{1}{R\left(q\right)R^2\left(q^2\right)},~ \sum_{n=0}^{\infty}\lambda_5(n)q^n=\dfrac{1}{R\left(q\right)R\left(q^4\right)},\\
\sum_{n=0}^{\infty}\lambda_6(n)q^n&=\dfrac{1}{R\left(-q\right)R\left(-q^4\right)},~\sum_{n=0}^{\infty}\lambda_7(n)q^n=\dfrac{R\left(q^4\right)}{R^4\left(q\right)}, ~ \sum_{n=0}^{\infty}\lambda_8(n)q^n=\dfrac{R^2\left(q\right)}{R^3\left(q^4\right)},\\
\sum_{n=0}^{\infty}\lambda_9(n)q^n&=\dfrac{R\left(q\right)}{R\left(q^{16}\right)},~
\sum_{n=0}^{\infty}\lambda_{10}(n)q^n=\dfrac{R\left(q^4\right)}{R^2\left(q\right)R\left(q^2\right)}.
\end{align*}
Then, for any $n\geq0$, we have
\begin{align*}
&\textup{(i)} &\lambda_{1}(5n+r)&=-\lambda_{1}^{\prime}(5n+r-2), \quad  r\in\{3,4\};\\
&\textup{(ii)} &\lambda_{2}(5n)&=\lambda_{2}^{\prime}(5n);\\
&\textup{(iii)} &  \lambda_{3}(5n+r)&=-\lambda_{3}^{\prime}(5n+r), \quad  r\in\{1,4\};\\
&\textup{(iv)}&\lambda_{4}(5n+r)&=-\lambda_{4}^{\prime}(5n+r-2), \quad r\in\{3,4\};\\
&\textup{(v)} &\lambda_{5}(5n+r)&=\lambda_{5}^{\prime}(5n+r-2), \quad  r\in\{3,4\};\\
&\textup{(vi)} & \lambda_{6}(10n+r)&=\lambda_{6}^{\prime}(10n+r-2), \quad  r\in\{5,7\};\\
&\textup{(vii)}& \lambda_{7}(10n+r)&=-\lambda_{7}^{\prime}(10n+r), \quad  r\in\{1,9\};\\
&\textup{(viii)} & \lambda_{8}(10n+r)&=-\lambda_{8}^{\prime}(10n+r-4), \quad  r\in\{5,9\};\\
&\textup{(ix)}& \lambda_{9}(10n+r)&=\lambda_{9}^{\prime}(10n+r-6), \quad  r\in\{7,9\};\\
&\textup{(x)}& \lambda_{10}(2n+r)&=(-1)^r\lambda_{10}^{\prime}(2n+r),\quad  r\in\{0,1\};\\
&\textup{(xi)}&\lambda_{10}(5n+r)&=\lambda_{10}^{\prime}(5n+r),\quad  r\in\{2,3\}.
\end{align*}
\end{theorem}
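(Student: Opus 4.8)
The plan is to prove all eleven items by a single mechanism, reducing each claimed coefficient congruence to the lacunarity of an explicit eta-quotient in a fixed set of residue classes. The basic reformulation is this: if $\sum_{n\ge0}\lambda(n)q^n=P(q)$ and $\sum_{n\ge0}\lambda'(n)q^n=P(q)^{-1}$, then the assertion $\lambda(mn+r)=s\,\lambda'(mn+r-\delta)$ for all $n$, with $s=\pm1$, is equivalent to saying that the power series $P(q)-s\,q^{\delta}P(q)^{-1}$ has no term whose exponent is congruent to $r$ modulo $m$. Hence for each part it suffices to (a) evaluate the combination $P(q)-s\,q^{\delta}P(q)^{-1}$ in closed form as a quotient of the $f_n$, and (b) show that this quotient vanishes in the prescribed residues modulo $m$.

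For step (a) I would first rewrite every $P(q)$ through $R(q)=H(q)/G(q)$ and the product forms of $G$ and $H$, turning each into a quotient of the $f_n$ and of $G,H$ at the arguments $q,q^2,q^4,q^5,q^{16}$ that actually occur. The closed forms are then supplied by Ramanujan's modular equations for the Rogers--Ramanujan continued fraction of the pertinent degree: degree $5$ for (i),(ii)---in the normalization of this paper the classical fifth-power relation reads $R(q)^{-5}-11q-q^{2}R(q)^{5}=f_1^{6}/f_5^{6}$---degree $2$ for (iii),(iv), degree $4$ for (v)--(viii), degrees $2$ and $4$ for the function underlying (x),(xi), and degree $16$ for (ix), while the substitution $q\mapsto-q$ produces (vi) from the degree-$4$ case. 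Several of the required evaluations of $P-s\,q^{\delta}P^{-1}$ are not available in the literature in the needed shape, so a genuine part of the work is to derive them as new identities for $G$ and $H$, typically by combining two standard modular equations so as to cancel the mixed terms.

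Step (b) is the decisive one. I will illustrate with (i): the fifth-power relation already gives the difference $R^{-5}-q^{2}R^{5}=f_1^{6}/f_5^{6}+11q$, so, since $r\in\{3,4\}$ and the term $11q$ lies in residue $1$, the claim is equivalent to showing that the residue-$3$ and residue-$4$ parts modulo $5$ of $f_1^{6}/f_5^{6}$ equal $-2$ times the corresponding parts of $q^{2}R^{5}$; as $f_5$ is a series in $q^{5}$, this reduces to comparing the $5$-dissection of $f_1^{6}$ with that of $R^{5}(q)$, the former obtainable by the methods already used for Theorem \ref{ExtraTheo1} and the latter controlled by the degree-$5$ relation expressing $R^{5}(q)$ rationally in $R(q^{5})$. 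A uniform alternative for the mod-$5$ items is to apply the root-of-unity projection $F\mapsto\frac15\sum_{j=0}^{4}\zeta^{-rj}F(\zeta^{j}q)$ with $\zeta=e^{2\pi i/5}$, together with the transformation of $R$ under $q\mapsto\zeta q$. The items with modulus $10$, namely (vi)--(ix), require interleaving a $2$-dissection with the $5$-dissection; note also that (x) is exactly the functional equation $P(-q)=P(q)^{-1}$, which I would deduce from a product identity relating $R(q),R(-q),R(q^{2}),R(q^{4})$.

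I expect step (b) to be the main obstacle, and within it the mixed-modulus items (vi)--(ix) to be the hardest: there the remainder quotient must be dissected modulo $10$ and shown to vanish in both prescribed residues at once, which is exactly where the new $G,H$-identities, the degree-$16$ relation, and the sign bookkeeping under $q\mapsto-q$ must all be made to fit together. By contrast, for the pure mod-$5$ items (ii),(iii),(v),(xi) the lacunarity should, once the correct companion identity is in hand, drop out directly from the Jacobi triple product by inspecting which exponents can occur.
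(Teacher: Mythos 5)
Your proposal is correct and follows essentially the same route as the paper: each part is reduced to showing that $P(q)\mp q^{\delta}P(q)^{-1}$ vanishes on the prescribed residue classes, the combination is evaluated in closed form via modular relations of the appropriate degree for $G(q)$ and $H(q)$ (including, as you anticipate, new identities that must be derived by combining standard ones), and the vanishing is then read off from $2$- and $5$-dissections, interleaved for the modulus-$10$ cases. The only differences are cosmetic — e.g.\ the paper handles (i) by rewriting $\Lambda_1$ as a single Laurent polynomial in $R(q^5)$ via Gugg's identities and the $5$-dissection of $f_1$, rather than dissecting $f_1^6$ separately, and it does not need your root-of-unity projection.
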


Next, the Ramanujan-G\"{o}llnitz-Gordon  continued fraction $\mathcal{G}(q)$ is defined by
\begin{align*}
\mathcal{G}(q):=\dfrac{q^{1/2}}{1+q} \cplus  \frac{q^2}{1+q^3}\cplus\frac{q^4}{1+q^5}\cplus  \frac{q^6}{1+q^7}\cplus\contdots
\end{align*}
and the Ramanujan-G\"{o}llnitz-Gordon identities are given by
\begin{align}\label{Sq}
S(q)&:= \sum_{n=0}^{\infty}\dfrac{(-q;q^2)_nq^{n^2}}{(q^2;q^2)_n}=\dfrac{1}{(q,q^4,q^7;q^8)_{\infty}},\\
\label{Tq} T(q)&:= \sum_{n=0}^{\infty}\dfrac{(-q;q^2)_nq^{n(n+2)}}{(q^2;q^2)_n}=\dfrac{1}{(q^3,q^4,q^5;q^8)_{\infty}},
\end{align}
where $S(q)$ and $T(q)$ are known as the Ramanujan-G\"{o}llnitz-Gordon functions. On page 229 of his second notebook \cite{nb}, Ramanujan  recorded a product representation
of $\mathcal{G}(q)$, namely, $$\mathcal{G}(q)=q^{1/2}\dfrac{T(q)}{S(q)}.$$

The following results on matching coefficients arise from modular relations involving $S(q)$ and $T(q)$.

\begin{theorem}\label{Theo8_1}
Let $k$ be a positive integer. Suppose that
\begin{align*}
\sum_{n=0}^{\infty}\rho_{1,k}(n)q^n&=\dfrac{S^k(q)}{T^k(q)}, &
\sum_{n=0}^{\infty}\rho_{2,k}(n)q^n&=\dfrac{T(q)S\left(q^k\right)}{S(q)T\left(q^k\right)},\\
\sum_{n=0}^{\infty}\rho_{3,k}(n)q^n&=\dfrac{S(q)S\left(q^k\right)}{T(q)T\left(q^k\right)},&
\sum_{n=0}^{\infty}\rho_{4}(n)q^n&=\dfrac{T\left(q^2\right)S\left(q^4\right)}{S\left(q^2\right)T\left(q^4\right)}, \\
\sum_{n=0}^{\infty}\rho_{5,k}(n)q^n&=\dfrac{S^k\left(-q\right)T^k\left(q\right)S^k\left(q^4\right)}{S^k\left(q\right)T^k\left(-q\right)T^k\left(q^4\right)}, &
\sum_{n=0}^{\infty}\rho_{6}(n)q^n&=\dfrac{S\left(-q\right)T\left(q^2\right)S\left(q^3\right)}{T\left(-q\right)S\left(q^2\right)T\left(q^3\right)},\\
\sum_{n=0}^{\infty}\rho_{7}(n)q^n&=\dfrac{T\left(-q\right)T\left(q^2\right)S\left(q^3\right)}{S\left(-q\right)S\left(q^2\right)T\left(q^3\right)}, &
\sum_{n=0}^{\infty}\rho_{8}(n)q^n&=\dfrac{S\left(q\right)S\left(-q^3\right)S\left(q^6\right)}{T\left(q\right)T\left(-q^3\right)T\left(q^6\right)},\\
\sum_{n=0}^{\infty}\rho_{9}(n)q^n&=\dfrac{S\left(q\right)T\left(-q^3\right)S\left(q^6\right)}{T\left(q\right)S\left(-q^3\right)T\left(q^6\right)}.
\end{align*}
Then, for any $n\geq0$, we have
\begin{align*}
&\textup{(i)} & &\rho_{1,1}\left(2n+1\right)=\rho_{1,1}^{\prime}\left(2n\right); &
&\textup{(ii)} & &\rho_{1,1}\left(4n+2\right)=-\rho_{1,1}^{\prime}\left(4n+1\right);\\
&\textup{(iii)} & &\rho_{1,2}\left(2n+3\right)=-\rho_{1,2}^{\prime}\left(2n+1\right); &
&\textup{(iv)} & &\rho_{1,2}\left(8n+4\right)=\rho_{1,2}^{\prime}\left(8n+2\right);\\
&\textup{(v)} & &\rho_{1,2}\left(8n+6\right)=\rho_{1,2}^{\prime}\left(8n+4\right);&
&\textup{(vi)} & &\rho_{1,4}\left(4n+6\right)=\rho_{1,2}^{\prime}\left(4n+2\right);\\
&\textup{(vii)} & &\rho_{2,3}\left(4n+5\right)=-\rho_{2,3}^{\prime}\left(4n+3\right);&
 &\textup{(viii)} & &\rho_{2,5}\left(4n+6\right)=-\rho_{2,5}^{\prime}\left(4n+2\right); \\
&\textup{(ix)} & &\rho_{2,7}\left(4n+7\right)=-\rho_{2,7}^{\prime}\left(4n+1\right);&
&\textup{(x)} & &\rho_{2,9}\left(4n+10\right)=-\rho_{2,9}^{\prime}\left(4n+2\right);\\
&\textup{(xi)} & &\rho_{3,3}\left(4n+6\right)=-\rho_{3,3}^{\prime}\left(4n+2\right);&
&\textup{(xii)} & &\rho_{3,5}\left(4n+7\right)=-\rho_{3,5}^{\prime}\left(4n+1\right);\\
&\textup{(xiii)} & &\rho_{3,9}\left(4n+11\right)=-\rho_{3,9}^{\prime}\left(4n+1\right);&
&\textup{(xiv)} & &\rho_{4}\left(4n+2\right)=-\rho_{4}^{\prime}\left(4n\right);\\
&\textup{(xv)} & &\rho_{5,1}\left(4n+6\right)=-\rho_{5,1}^{\prime}\left(4n+2\right);&
&\textup{(xvi)} & &\rho_{5,1}\left(4n+7\right)=\rho_{5,1}^{\prime}\left(4n+3\right);\\
&\textup{(xvii)} & &\rho_{5,1}\left(8n+4\right)=-\rho_{5,1}^{\prime}\left(8n\right);&
&\textup{(xviii)} & &\rho_{5,1}\left(8n+5\right)=\rho_{5,1}^{\prime}\left(8n+1\right);\\
&\textup{(xix)} & &\rho_{5,2}\left(8n+12\right)=-\rho_{5,2}^{\prime}\left(8n+4\right);&
&\textup{(xx)} & &\rho_{6}\left(4n+5\right)=-\rho_{6}^{\prime}\left(4n+3\right);\\
&\textup{(xxi)} & &\rho_{7}\left(8n\right)=-\rho_{7}^{\prime}\left(8n\right);&
&\textup{(xxii)} & &\rho_{8}\left(8n+13\right)=-\rho_{8}^{\prime}\left(8n+3\right);\\
&\textup{(xxiii)} & &\rho_{9}\left(4n+6\right)=-\rho_{9}^{\prime}\left(4n+2\right).
\end{align*}

\end{theorem}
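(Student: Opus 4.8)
The plan is to treat Theorem~\ref{Theo8_1} as the Ramanujan--G\"ollnitz--Gordon analogue of Theorem~\ref{Theo5_1}, with the G\"ollnitz--Gordon functions $S,T$ playing the role of the Rogers--Ramanujan functions $G,H$ and modular relations for $\mathcal{G}(q)$ replacing those for $R(q)$. First I would record the product forms of the two basic building blocks,
$$\frac{S(q)}{T(q)}=\frac{(q^3,q^5;q^8)_\infty}{(q,q^7;q^8)_\infty},\qquad S(q)T(q)=\frac{f_2f_8^2}{f_1f_4^2},$$
and observe the crucial structural fact that for every one of the nine products $\rho_{i,k}$ the reciprocal series is obtained by simply interchanging $S$ and $T$ throughout (equivalently, by inverting each factor $\mathcal{G}(q^a)^{\pm1}$). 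Hence $\sum_n\rho'(n)q^n=\bigl(\sum_n\rho(n)q^n\bigr)^{-1}$ is again a balanced monomial of the same shape, and each claim reduces to comparing $P(q):=\sum_n\rho(n)q^n$ with $q^{j}P(q)^{-1}$ along a prescribed arithmetic progression, where $j$ is the difference of the two offsets appearing in the claim.

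The engine of the proof is, for each item, a two-term theta identity of the form
$$P(q)\mp q^{j}\,P(q)^{-1}=\Theta(q),$$
where $\Theta(q)$ is a single eta-quotient whose series is supported only on residues modulo $a$ that are \emph{disjoint} from the target residue $b$. Granting such an identity, extracting coefficients on the class $b\pmod a$ annihilates $\Theta$ and yields exactly $\rho(an+b)=\pm\rho'(an+b-j)$, the final sign being the negative of the sign attached to $q^jP^{-1}$. For the pure-power products $\rho_{1,k}=(S/T)^k$ (items (i)--(vi)) the required identity is a self-relation of the G\"ollnitz--Gordon fraction: for instance item (i) amounts to showing that $\dfrac{S(q)^2-qT(q)^2}{S(q)T(q)}=P(q)-qP(q)^{-1}$ is an even function of $q$, which follows from the G\"ollnitz--Gordon evaluation of $S(q)^2-qT(q)^2$ as a theta quotient, while items (ii)--(vi) are the same statements after a further $4$- or $8$-dissection. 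For the mixed products $\rho_{2,k},\rho_{3,k}$ (items (vii)--(xiii)) the identity is a modular equation of degree $k$ connecting $\mathcal{G}(q)$ and $\mathcal{G}(q^k)$, and for $\rho_4,\dots,\rho_9$ (items (xiv)--(xxiii)) it is a degree-$2$ or degree-$3$ relation combined with the substitution $q\mapsto-q$.

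To produce these identities uniformly I would reduce everything to the two-variable theta function $f(a,b)$ via Jacobi's triple product~\eqref{jtpi} and then apply the $2$-, $4$-, and $p$-dissections of $\varphi,\psi,$ and $f(-q)$ already exploited in Theorem~\ref{Theo_1}; the degree $3,5,7,9$ relations for $\mathcal{G}$ can be assembled from the corresponding dissections of $S$ and $T$ together with the known G\"ollnitz--Gordon modular relations. I expect the main obstacle to be twofold. First, deriving the higher-degree relations ($k=7,9$, i.e.\ items (ix),(x),(xiii),(xxii)) in the clean two-term shape displayed above is delicate, since a naive modular equation produces several additional terms that must be shown to collapse. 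Second, and more fundamentally, one must verify in every case the \emph{support condition} that $\Theta$ contributes nothing to the class $b\pmod a$; this is precisely what legitimizes the coefficient extraction, and it requires careful $2$- and $4$-dissection bookkeeping of $\Theta$ together with sign tracking, from which the fixed $\pm1$ constants also emerge. Once the identities and their supports are in hand, the remaining work---multiplying out the products, confirming the shift $j$, and reading off the constant---is routine.
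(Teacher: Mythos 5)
Your plan is essentially the paper's own proof: for each item the authors likewise form $P\pm q^{j}P^{-1}$, evaluate the numerator through two-term modular relations for $S$ and $T$ (the Huang and Xia--Yao relations together with Lemma \ref{LemmaSTModular}) and the product formula $S(q)T(q)=f_2f_8^2/(f_1f_4^2)$, and then annihilate the target residue class by iterated $2$-dissection from Lemma \ref{Lemma-2-dissection}. The only points your sketch understates are that the resulting right-hand side is in general a sum of several eta-quotients plus a monomial (e.g.\ $2q^2$) rather than a single quotient, and that one of the needed two-term relations, namely \eqref{ModularIdentityST2}, is not in the literature and must be proved first --- both of which you correctly flag as the delicate bookkeeping steps.
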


Our next results are associated to the Rogers-Ramanujan type functions $X(q)$ and $Y(q)$ that appear in  two identities in  Slater's list of Rogers-Ramanujan type identities \cite[Identities (49) and (54)]{slater}. The two identities are
\begin{align}
\label{Xq}X(q)&:= \sum_{n=0}^\infty\dfrac{(-q^2;q^2)_n(1-q^{n+1})q^{n(n+2)}}{(q;q)_{2n+2}}=\dfrac{(q,q^{11},q^{12};q^{12})_\infty}{(q;q)_\infty}=\dfrac{f(-q,-q^{11})}{f_1},\\
\label{Yq} Y(q)&:=\sum_{n=0}^\infty\dfrac{(-q^2;q^2)_n(1+q^n)q^{n^2}}{(q;q)_{2n+2}}=\dfrac{(q^5,q^7,q^{12};q^{12})_\infty}{(q;q)_\infty}=\dfrac{f(-q^5,-q^{7})}{f_1}.
\end{align}

\begin{theorem}\label{Theo12_1}
For integers $k\ge1$ and $m>1$, suppose that
\begin{align*}
\sum_{n=0}^{\infty}\xi_{1,k}(n)q^n&=\dfrac{Y^k(q)}{X^k(q)}, \quad
\sum_{n=0}^{\infty}\xi_{m,k}(n)q^n=\dfrac{X^k(q)Y^k\left(q^m\right)}{Y^k(q)X^k\left(q^m\right)},\\
\sum_{n=0}^{\infty}\xi_{4}(n)q^n&=\dfrac{Y(q)Y\left(q^2\right)}{X(q)X\left(q^2\right)}.
\end{align*}
Then,  for any $n\geq0$, we have
\begin{align*}
&\textup{(i)} & &\xi_{1,1}\left(2n+3\right)=-\xi_{1,1}^{\prime}\left(2n+1\right);&
&\textup{(ii)} & &\xi_{1,1}\left(4n+2\right)=\xi_{1,1}^{\prime}\left(4n\right);\notag\\
&\textup{(iii)} & &\xi_{1,1}\left(6n+2\right)=\xi_{1,1}^{\prime}\left(6n\right);&
&\textup{(iv)} & &\xi_{1,2}\left(12n+7\right)=\xi_{1,2}^{\prime}\left(12n+3\right);\notag\\
&\textup{(v)} & &\xi_{1,2}\left(24n+12\right)=\xi_{1,2}^{\prime}\left(24n+8\right); &
&\textup{(vi)} & &\xi_{2,1}\left(2n+3\right)=-\xi_{2,1}^{\prime}\left(2n+1\right);\\
&\textup{(vii)} & &\xi_{2,1}\left(4n+2\right)=\xi_{2,1}^{\prime}\left(4n\right);&
&\textup{(viii)} & &\xi_{2,1}\left(12n+8\right)=\xi_{2,1}^{\prime}\left(12n+6\right);\\
&\textup{(ix)} & &\xi_{2,2}\left(12n+7\right)=\xi_{2,2}^{\prime}\left(12n+3\right);&
&\textup{(x)} & &\xi_{2,2}\left(24n+12\right)=\xi_{2,2}^{\prime}\left(24n+8\right);\\
&\textup{(xi)} & &\xi_{3,1}\left(3n+5\right)=\xi_{3,1}^{\prime}\left(3n+1\right);&
&\textup{(xii)} & &\xi_{3,1}\left(6n+8\right)=-\xi_{3,1}^{\prime}\left(6n+4\right);\\
&\textup{(xiii)} & &\xi_{3,1}\left(6n+4\right)=-\xi_{3,1}^{\prime}\left(6n\right);&
&\textup{(xiv)} & &\xi_{3,1}\left(12n+7\right)=\xi_{3,1}^{\prime}\left(12n+3\right);\\
&\textup{(xv)} & &\xi_{3,2}\left(24n+14\right)=\xi_{3,2}^{\prime}\left(24n+6\right);&
&\textup{(xvi)} & &\xi_{3,2}\left(36n+25\right)=\xi_{3,2}^{\prime}\left(36n+17\right);\\
&\textup{(xvii)} & &\xi_{4}\left(12n+15\right)=-\xi_{4}^{\prime}\left(12n+9\right);&
&\textup{(xviii)} & &\xi_{4}\left(12n+7\right)=-\xi_{4}^{\prime}\left(12n+1\right);\\
&\textup{(xix)} & &\xi_{4}\left(12n+10\right)=\xi_{4}^{\prime}\left(12n+4\right);&
&\textup{(xx)} & &\xi_{4}\left(12n+11\right)=-\xi_{4}^{\prime}\left(12n+5\right);\\
&\textup{(xxi)} & &\xi_{4}\left(24n+13\right)=\xi_{4}^{\prime}\left(24n+7\right);&
&\textup{(xxii)} & &\xi_{4}\left(36n+24\right)=\xi_{4}^{\prime}\left(36n+18\right).
\end{align*}
\end{theorem}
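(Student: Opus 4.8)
The plan is to reduce the entire theorem to the study of the single quotient
$$u(q) := \frac{Y(q)}{X(q)} = \frac{f(-q^5,-q^{7})}{f(-q,-q^{11})},$$
where the common factor $f_1$ in \eqref{Xq} and \eqref{Yq} cancels. Every generating function in the statement is a monomial in $u$ at a small dilation: $\sum_{n\ge0}\xi_{1,k}(n)q^n=u(q)^k$ with reciprocal $u(q)^{-k}$; $\sum_{n\ge0}\xi_{m,k}(n)q^n=\bigl(u(q^m)/u(q)\bigr)^k$ with reciprocal $\bigl(u(q)/u(q^m)\bigr)^k$; and $\sum_{n\ge0}\xi_4(n)q^n=u(q)u(q^2)$ with reciprocal $1/\bigl(u(q)u(q^2)\bigr)$. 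Thus it suffices to understand $u$ and its reciprocal, together with their behaviour under $q\mapsto q^2$ and $q\mapsto q^3$.

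The mechanism converting an identity into a matching-coefficient relation is the following. If $F(q)=\sum_n A(n)q^n$ has reciprocal $\sum_n A'(n)q^n$, then for $\epsilon\in\{1,-1\}$ and $s\ge1$ the relation $A(an+b)=\epsilon\,A'(an+b-s)$ for all $n\ge0$ is \emph{equivalent} to the assertion that the part of $F(q)-\epsilon q^{s}/F(q)$ lying in the residue class $b\pmod a$ is a polynomial of degree less than $b$, since $[q^{an+b}]\bigl(F-\epsilon q^{s}/F\bigr)=A(an+b)-\epsilon A'(an+b-s)$. For instance, part (i) is precisely the statement that the odd part of $u(q)+q^2/u(q)$ is the single monomial $q$, i.e.\ that
$$\frac{Y(q)}{X(q)}+q^2\,\frac{X(q)}{Y(q)}-q \quad\text{is an even power series.}$$
To prove such identities I would first record the $2$-dissections
$$f(-q,-q^{11})=f(q^{14},q^{34})-q\,f(q^{10},q^{38}),\qquad f(-q^5,-q^{7})=f(q^{22},q^{26})-q^{5}f(q^{2},q^{46}),$$
obtained by splitting the defining theta series $\sum_n(-1)^nq^{6n^2-5n}$ and $\sum_n(-1)^nq^{6n^2-n}$ according to the parity of $n$. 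Rationalizing the denominators then expresses the even and odd parts of $u$ and of $1/u$ as explicit theta quotients, and the displayed identity collapses to a single theta-product identity.

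For the finer dissections (moduli $4,6,8,12,24,36$) and for the functions $\xi_{m,k}$ and $\xi_4$ I would establish the analogous $2$- and $3$-dissections of $f(-q,-q^{11})$ and $f(-q^5,-q^{7})$, and from them the quadratic and cubic modular relations linking $u(q)$, $u(q^2)$ and $u(q^3)$; these are the new Rogers--Ramanujan type identities promised in the introduction. Because $u(q^m)^{\pm1}$ is a series in $q^m$ and every modulus $a$ occurring for a given $m$ is a multiple of $m$, the factor $u(q^m)^{\pm1}$ only translates exponents within a fixed residue class, so each case of $\xi_{m,k}$ reduces to a dissection statement for $u(q)^{\pm k}$ alone. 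With the relevant relation in hand, each part follows by substituting into $F(q)-\epsilon q^{s}/F(q)$, dissecting to the stated modulus, and reading off the residue class $b\pmod a$, exactly the scheme already used for $G,H$ in Theorem \ref{Theo5_1} and for $S,T$ in Theorem \ref{Theo8_1}; note that here the matching constant is always $\pm1$, since $u$ is a unit power series.

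The main obstacle is the theta-product step for each case: proving that the relevant combination collapses to the predicted even series. Concretely, after rationalizing, part (i) requires showing that $f(q^{22},q^{26})f(q^{10},q^{38})-q^{4}f(q^{2},q^{46})f(q^{14},q^{34})$ simplifies to a single theta function, and the mod-$3$ cases require an analogous cubic relation for $u(q^3)/u(q)$; this is where all the content sits. Once these evaluations are secured, verifying the twenty-two individual congruences is routine but lengthy, and I would organize the work by modulus so that each dissection of $X$ and $Y$ is computed only once and then reused across the parts that share it.
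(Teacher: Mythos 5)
Your reduction of every case to the quotient $u(q)=Y(q)/X(q)$, and your mechanism --- the relation $A(an+b)=\epsilon A'(an+b-s)$ is equivalent to the class $b\pmod a$ of $F-\epsilon q^{s}F^{-1}$ being supported in degrees below $b$ --- are exactly the paper's strategy (it forms $\Xi=F\mp q^{s}F^{-1}$ and shows $U_{an+b}(\Xi)=0$), and your parity splittings of $\sum(-1)^nq^{6n^2-5n}$ and $\sum(-1)^nq^{6n^2-n}$ are correct. But there are two genuine gaps. First, the claim that ``the factor $u(q^m)^{\pm1}$ only translates exponents within a fixed residue class, so each case of $\xi_{m,k}$ reduces to a dissection statement for $u(q)^{\pm k}$ alone'' does not hold: the combination $F\mp q^{s}F^{-1}$ with $F=u(q^m)^{k}u(q)^{-k}$ contains \emph{both} $u(q^m)^{k}$ and $u(q^m)^{-k}$, so there is no single factor to peel off, and in any event a series in $q^{m}$ only preserves residues modulo $m$, not modulo the moduli $12$, $24$, $36$ that occur for $m=2,3$. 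What actually makes these cases tractable is clearing denominators,
\begin{align*}
\Xi_{m,k}=\frac{\bigl(X^k(q)Y^k(q^m)\bigr)^2\mp q^{s}\bigl(Y^k(q)X^k(q^m)\bigr)^2}{\bigl(X(q)Y(q)X(q^m)Y(q^m)\bigr)^k},
\end{align*}
factoring the numerator by product formulas for $X(q)Y(q^m)\pm q^{2}X(q^m)Y(q)$, and using $X(q)Y(q)=f_{12}^2f_6/(f_1f_2f_3)$, i.e.\ \eqref{XYq}; this is precisely how the paper's proof of (xv)--(xvi) begins.

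Second, the ``theta-product step'' that you rightly say carries all the content is never carried out, and the route you sketch would not suffice. The needed inputs are the explicit modular relations of Lemma \ref{NewlemmaXY} --- \eqref{BBXY1}, \eqref{RobinsXY1}, \eqref{newXY1}, \eqref{newXY2} --- together with the cited identities of Robins and Baruah--Bora; the paper derives the new ones from the addition theorems \eqref{ThetaIdentity0}--\eqref{ThetaIdentity3}, not from $2$-dissections of the individual theta functions. Even for part (i) your rationalization is incomplete: writing $f(-q,-q^{11})=A-qB$ and $f(-q^5,-q^7)=C-q^5D$, the odd part of $u+q^2/u$ is $q\,(CB-q^4DA)\bigl(1/(A^2-q^2B^2)-q^2/(C^2-q^{10}D^2)\bigr)$, so you must evaluate the bracketed difference of reciprocals as well, not only the product $CB-q^4DA$. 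Finally, the endgame is far from ``routine but lengthy'': for (xvi) the paper, after three rounds of dissection, is left with the eight-term eta-quotient sum \eqref{The36n+5Target}, and proving that it vanishes requires the cubic theta function $a(q)$, the identities \eqref{a1inPhiPsi}--\eqref{OddpartPhi1Phi3}, and the chain of $\varphi$, $\psi$, $\chi$ evaluations culminating in \eqref{LastTrick}. Without supplying the modular relations and these vanishing arguments, the proposal is a correct plan of attack rather than a proof.
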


\begin{remark}
Replacing $q$ by $-q$ in the $q$-products of Theorems \ref{Theo_1}--\ref{Theo12_1}, we can deduce similar results satisfied by the corresponding coefficients. We omit those results.
\end{remark}

The paper is organized as follows. In Section \ref{Preliminary}, we present some preliminary lemmas as well as some new  modular relations involving Rogers-Ramanujan type functions. In Sections \ref{sec3}--\ref{sec7}, we prove Theorems \ref{Theo_1}--\ref{Theo12_1}. In the final section, we pose a few interesting conjectures on matching coefficients.

We end this section by defining the extraction operator $U_{a n+b}$, which acts on a power series as
\begin{align*}
    U_{a n+b}\left(\sum_{n=0}^\infty A(n)q^n\right)&=\sum_{n=0}^\infty A(a n+b)q^n.
\end{align*}
This operator will be used frequently throughout the proofs of the theorems.

\section{Preliminaries}\label{Preliminary}

Some useful results on $f(a,b)$ are presented in the following lemma.

\begin{lemma}\label{ThetaIdentityLemma}
\textup{(\cite[pp. 45--46, Entries 29 and 30 \iffalse-(ii), -(iii), -(v), and -(vi)\fi]{B1})} If $ab=cd$,
\begin{align}
\label{ThetaIdentity0}f(a,b)&=f\left(a^3b,ab^3\right)+a f\left(\dfrac{b}{a},a^5b^3\right),\\
\label{ThetaIdentity1}f^2(-a,-b)&=f\left(a^2,b^2\right)\varphi(ab)-2a f\left(\dfrac{b}{a},a^3b\right)\psi\left(a^2b^2\right),\\
\label{ThetaIdentity2}f(a,b)f(c,d)&+f(-a,-b)f(-c,-d)=2f(ac,bd)f(ad,bc),\\
\label{ThetaIdentity3}f(a,b)f(c,d)&-f(-a,-b)f(-c,-d)=2af\left(\dfrac{b}{c},ac^2d\right)f\left(\dfrac{b}{d},acd^2\right).
\end{align}
\end{lemma}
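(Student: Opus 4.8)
The plan is to work entirely from the bilateral-series definition $f(a,b)=\sum_{n=-\infty}^{\infty}a^{n(n+1)/2}b^{n(n-1)/2}$ in \eqref{jtpi}, treating \eqref{ThetaIdentity0} as a $2$-dissection of a single theta function and \eqref{ThetaIdentity2}--\eqref{ThetaIdentity3} as the two fundamental addition formulas obtained from a product of two such series; the squared identity \eqref{ThetaIdentity1} will then fall out as a specialization. The only auxiliary facts I would invoke beyond the definition are the symmetry $f(A,B)=f(B,A)$, which is immediate upon replacing $n$ by $-n$, and the two evaluations $f(x,x)=\varphi(x)$ (by definition of $\varphi$) and $f(1,x)=2\psi(x)$, the latter because $\sum_n x^{n(n-1)/2}$ visits each triangular exponent exactly twice.

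For \eqref{ThetaIdentity0} I would split the summation index according to parity. Setting $n=2m$ produces the monomial $a^{2m^2+m}b^{2m^2-m}$, and setting $n=2m+1$ produces $a\cdot a^{2m^2+3m}b^{2m^2+m}$. In each case I would solve the linear system matching the $m^2$- and $m$-coefficients of the exponents against the template $A^{m(m+1)/2}B^{m(m-1)/2}$: the even part forces $(A,B)=(a^3b,ab^3)$ and the odd part forces $(A,B)=(a^5b^3,b/a)$. Thus, after extracting the prefactor $a$ from the odd part and using the symmetry of $f$, the two halves become $f(a^3b,ab^3)$ and $a\,f(b/a,a^5b^3)$, which is exactly \eqref{ThetaIdentity0}.

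The heart of the matter is \eqref{ThetaIdentity2}--\eqref{ThetaIdentity3}. Forming the double sums for $f(a,b)f(c,d)$ and $f(-a,-b)f(-c,-d)$ over $(m,n)\in\mathbb{Z}^2$ (rearrangement being justified by absolute convergence when $|ab|,|cd|<1$), their sum retains only the terms with $m+n$ even and their difference only those with $m+n$ odd, each carrying an overall factor $2$. On the even locus I would use the bijection $(m,n)=(i+j,\,i-j)$ of $\mathbb{Z}^2$ onto the same-parity pairs: after collecting exponents, each monomial factors as $(ab/cd)^{ij}$ times the $(i,j)$-term of the double sum defining $f(ac,bd)\,f(ad,bc)$, and the hypothesis $ab=cd$ makes the prefactor $1$, giving \eqref{ThetaIdentity2}. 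On the odd locus I would use $(m,n)=(i+j+1,\,i-j)$, which pulls out the prefactor $a$ and separates the sum into two single-index theta functions; here $ab=cd$ enters twice, once to cancel the cross factor $(ab/cd)^{ij}$ and once to rewrite the arguments into the stated form $f(b/c,ac^2d)$ and $f(b/d,acd^2)$, yielding \eqref{ThetaIdentity3}. I expect the exponent bookkeeping of these two substitutions, together with verifying bijectivity onto the correct parity class, to be the main obstacle; everything else is routine.

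Finally, for \eqref{ThetaIdentity1} I would specialize \eqref{ThetaIdentity2} and \eqref{ThetaIdentity3} to $c=a$ and $d=b$, where the condition $ab=cd$ holds automatically. Using $f(ab,ab)=\varphi(ab)$ and $f(1,a^2b^2)=2\psi(a^2b^2)$, the two specializations read $f^2(a,b)+f^2(-a,-b)=2f(a^2,b^2)\varphi(ab)$ and $f^2(a,b)-f^2(-a,-b)=4a\,f(b/a,a^3b)\psi(a^2b^2)$. Subtracting the second from the first eliminates $f^2(a,b)$ and leaves $f^2(-a,-b)=f(a^2,b^2)\varphi(ab)-2a\,f(b/a,a^3b)\psi(a^2b^2)$, which is \eqref{ThetaIdentity1}.
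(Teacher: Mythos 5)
The paper does not prove this lemma at all: it is quoted verbatim from Berndt's \emph{Ramanujan's Notebooks, Part III} (Entries 29 and 30), so there is no internal proof to compare against. Your argument is correct and is essentially the classical proof found in that reference: the parity split of the index $n$ gives \eqref{ThetaIdentity0} with exactly the exponent-matching you describe; the parity split of $m+n$ in the double sum, with the change of variables $(m,n)=(i+j,i-j)$ producing the cross-factor $(ab/cd)^{ij}$ that the hypothesis kills, gives \eqref{ThetaIdentity2}; and the specialization $c=a$, $d=b$ together with $f(1,x)=2\psi(x)$ recovers \eqref{ThetaIdentity1} exactly as you say. The only point needing care is the one you flagged: on the odd locus with $(m,n)=(i+j+1,i-j)$, the $(i,j)$-term of the source matches the $(-i,-j)$-term (not the $(i,j)$-term) of $a\,f(b/c,ac^2d)\,f(b/d,acd^2)$ after eliminating $d=ab/c$; since negation of both indices is a bijection of $\mathbb{Z}^2$, this is harmless, and the bookkeeping does close up to give \eqref{ThetaIdentity3}.
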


The next three lemmas contain some known 2-, 3-, and 5-dissections of certain $q$-products.
\begin{lemma}\label{Lemma-2-dissection}\textup{(\cite{BKaur1, Hirsch1,HS1})}
We have
\begin{align*}
&f_1^2=\frac{f_{2} f_{8}^5}{f_{4}^2 f_{16}^2}- 2q\frac{ f_{2} f_{16}^2}{f_{8}}, &
&\dfrac{1}{f_1^2}=\dfrac{f_{8}^5}{f_{2}^5 f_{16}^2}+2q\dfrac{f_{4}^2 f_{16}^2 }{f_{2}^5 f_{8}},\\
&f_1^4=\frac{f_{4}^{10}}{f_{2}^2 f_{8}^4}- 4q\frac{ f_{2}^2 f_{8}^4}{f_{4}^2}, & &\frac{1}{f_1^4}=\frac{f_{4}^{14}}{f_{2}^{14} f_{8}^4}+4q\frac{ f_{4}^2 f_{8}^4 }{f_{2}^{10}},\\
&f_{1} f_{3}=\frac{f_{2} f_{8}^2 f_{12}^4}{f_{4}^2 f_{6} f_{24}^2}-q\frac{f_{4}^4 f_{6} f_{24}^2}{f_{2} f_{8}^2 f_{12}^2},& &\frac{1}{f_1 f_{3}}=\frac{f_{8}^2 f_{12}^5}{f_{2}^2 f_{4} f_{6}^4 f_{24}^2}+q\frac{f_{4}^5 f_{24}^2 }{f_{2}^4 f_{6}^2 f_{8}^2 f_{12}},\\
&\frac{f_{3}}{f_1^3}=\frac{f_{4}^6 f_{6 }^3}{f_{2 }^9 f_{12 }^2}+3 q\frac{ f_{4 }^2 f_{6 } f_{12 }^2}{f_{2 }^7},& &\frac{f_{3}^3}{f_1}=\frac{f_{4 }^3 f_{6 }^2}{f_{2 }^2 f_{12 }}+q\frac{f_{12 }^3 }{f_{4 }},\\
&\frac{f_1}{f_{3}}=\frac{f_{2 } f_{16 } f_{24 }^2}{f_{6 }^2 f_{8 } f_{48 }}-q\frac{f_{2 } f_{8 }^2 f_{12 } f_{48 } }{f_{4 } f_{6 }^2 f_{16 } f_{24 }},& &\frac{f_{3}}{f_1}=\frac{f_{4 } f_{6 } f_{16 } f_{24 }^2}{f_{2 }^2 f_{8 } f_{12 } f_{48 }}+q\frac{f_{6 } f_{8 }^2 f_{48 }}{f_{2 }^2 f_{16 } f_{24 }},\\
&\frac{f_1^2}{f_{3}^2}=\frac{f_{2 } f_{4 }^2 f_{12 }^4}{f_{6 }^5 f_{8 } f_{24 }}-2q\frac{f_{2 }^2 f_{8 } f_{12 } f_{24 }}{f_{4 } f_{6 }^4},& &\frac{f_1}{f_{5}}=\frac{f_{2 } f_{8 } f_{20 }^3}{f_{4 } f_{10 }^3 f_{40 }}- q\frac{f_{4 }^2 f_{40 }}{f_{8 } f_{10 }^2},\\
&\frac{f_{5}}{f_1}=\frac{f_{8 } f_{20 }^2}{f_{2 }^2 f_{40 }}+q\frac{f_{4 }^3 f_{10 } f_{40 } }{f_{2 }^3 f_{8 } f_{20 }}.
\end{align*}
\end{lemma}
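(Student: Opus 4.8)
The plan is to read every identity as a \emph{$2$-dissection}: writing a generic entry as $g(q)=g_0(q^2)+q\,g_1(q^2)$, the lemma simply records the even part $g_0(q^2)$ and the odd part $q\,g_1(q^2)$ as explicit eta-quotients. I would prove them all by the same route. First rewrite $g$ in terms of Ramanujan's theta functions via the standard conversions $\varphi(-q)=f_1^2/f_2$, $\psi(q)=f_2^2/f_1$, and $\varphi(q)=f_2^5/(f_1^2f_4^2)$; then substitute the basic $2$-dissections
\begin{align*}
\varphi(q)&=\varphi(q^4)+2q\,\psi(q^8),&
\varphi(q)^2&=\varphi(q^2)^2+4q\,\psi(q^4)^2,&
\psi(q)&=f(q^6,q^{10})+q\,f(q^2,q^{14});
\end{align*}
finally collect even and odd powers of $q$ and translate back into the $f_n$'s. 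Each of these three dissections is itself a specialization of the addition/subtraction formulas \eqref{ThetaIdentity2} and \eqref{ThetaIdentity3}, so no input beyond Lemma~\ref{ThetaIdentityLemma} is required.

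The pure-power entries then fall out mechanically. For example $f_1^2=f_2\,\varphi(-q)=f_2\bigl[\varphi(q^4)-2q\,\psi(q^8)\bigr]$, and inserting $\varphi(q^4)=f_8^5/(f_4^2f_{16}^2)$ and $\psi(q^8)=f_{16}^2/f_8$ reproduces the first formula verbatim; the entry for $f_1^4=f_2^2\,\varphi(-q)^2$ uses $\varphi(-q)^2=\varphi(q^2)^2-4q\,\psi(q^4)^2$ identically. For the reciprocals I would first rationalize the denominator into a function of $q^2$ by means of $\varphi(q)\varphi(-q)=\varphi(-q^2)^2$, obtaining
\begin{align*}
\frac{1}{f_1^2}=\frac{\varphi(q)}{f_2\,\varphi(-q^2)^2},\qquad
\frac{1}{f_1^4}=\frac{\varphi(q)^2}{f_2^2\,\varphi(-q^2)^4},
\end{align*}
after which the $2$-dissections of $\varphi(q)$ and $\varphi(q)^2$ finish the computation.

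The genuinely delicate entries are the mixed quotients $f_1f_3$, $f_1/f_3$, $f_1^2/f_3^2$, $f_3/f_1^3$, $f_3^3/f_1$, $f_1/f_5$, and $f_5/f_1$, in which $g$ is a ratio of theta functions living at two different levels. Here I would specialize the two-variable identities \eqref{ThetaIdentity2} and \eqref{ThetaIdentity3} to split products such as $f(-q)f(-q^3)$ and $f(-q)f(-q^5)$ into their even and odd parts, after multiplying numerator and denominator by the appropriate conjugate factor so that the denominator becomes a function of $q^2$. The main obstacle is precisely this step: choosing the correct conjugating factor and the correct arguments in \eqref{ThetaIdentity2}--\eqref{ThetaIdentity3} so that the unwanted cross terms cancel, and then recognizing each surviving theta function as an eta-quotient of exactly the stated shape. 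Since these identities are classical (\cite{BKaur1,Hirsch1,HS1}), a completely rigorous alternative is available: after clearing denominators each assertion becomes an equality of holomorphic eta-quotients of the same weight and level, so that verifying agreement of finitely many coefficients---read off directly from the product expansions---forces equality of the full series by the theory of modular forms.
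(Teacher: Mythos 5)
First, a point of comparison: the paper does not prove this lemma at all --- it is imported verbatim from \cite{BKaur1,Hirsch1,HS1} --- so there is no internal argument to measure your proposal against. Judged on its own terms, your treatment of the four pure-power entries is correct and complete: the conversions $f_1^2=f_2\varphi(-q)$ and $f_1^4=f_2^2\varphi^2(-q)$, the rationalizations via $\varphi(q)\varphi(-q)=\varphi^2(-q^2)$, and the dissections $\varphi(q)=\varphi(q^4)+2q\psi(q^8)$ (which actually comes from \eqref{ThetaIdentity0}, as in \eqref{phi-2}, rather than from \eqref{ThetaIdentity2}--\eqref{ThetaIdentity3}) and $\varphi^2(q)=\varphi^2(q^2)+4q\psi^2(q^4)$ (from \eqref{ThetaIdentity2} and \eqref{ThetaIdentity3} with $a=b=c=d=q$ together with $f(1,q^4)=2\psi(q^4)$) reproduce those four formulas exactly as you say, using nothing beyond Lemma \ref{ThetaIdentityLemma}.

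For the nine mixed entries, however, your plan has a concrete defect as stated. The identities \eqref{ThetaIdentity2} and \eqref{ThetaIdentity3} require $ab=cd$, and for the products you propose to split, $f(-q)f(-q^3)=f(-q,-q^2)f(-q^3,-q^6)$ has $ab=q^3\neq q^9=cd$ (similarly for $f(-q)f(-q^5)$), so these formulas cannot simply be ``specialized'' to dissect such products. One must instead first dissect each factor separately via \eqref{ThetaIdentity0} (as in $\psi(q)=f(q^6,q^{10})+qf(q^2,q^{14})$) and then evaluate the resulting two-parameter theta products as eta-quotients --- this is where the $f_{16}$, $f_{40}$, and $f_{48}$ appearing in the stated formulas come from, and it is precisely where the real work in \cite{BKaur1,Hirsch1,HS1} lies. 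You identify this as ``the main obstacle'' but do not resolve it, so nine of the thirteen identities remain unproved by the direct route. Your fallback --- clearing denominators and verifying finitely many coefficients of holomorphic eta-quotients of fixed weight and level --- is a legitimate way to certify each identity rigorously (subject to checking holomorphy at all cusps so that the Sturm bound applies), and together with the explicit citations it makes the proposal an acceptable verification plan; but as a self-contained proof it is complete only for the $f_1^{\pm 2}$ and $f_1^{\pm 4}$ entries.
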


\begin{lemma}\label{Lemma-3-dissection} \textup{(\cite{Hirsch1})}
We have
\begin{align*}
&\frac{f_1^2}{f_{2}}=\frac{f_{9 }^2}{f_{18 }}-2 q\frac{f_{3 } f_{18 }^2}{f_{6 } f_{9 }},& &\frac{f_{2}}{f_1^2}=\frac{f_{6 }^4 f_{9 }^6}{f_{3 }^8 f_{18 }^3}+2 q\frac{f_{6 }^3 f_{9 }^3}{f_{3 }^7}+4 q^{2 }\frac{f_{6 }^2 f_{18 }^3}{f_{3 }^6},\\
&\frac{f_1 f_{4}}{f_{2}}=\frac{f_{3 } f_{12 } f_{18 }^5}{f_{6 }^2 f_{9 }^2 f_{36 }^2}-q\frac{f_{9 } f_{36 }}{f_{18 }}, & &\frac{f_{2}}{f_1 f_{4}}=\frac{f_{18 }^9}{f_{3 }^2 f_{9 }^3 f_{12 }^2 f_{36 }^3}+q\frac{f_{6 }^2 f_{18 }^3}{f_{3 }^3 f_{12 }^3}+q^{2 }\frac{f_{6 }^4 f_{9 }^3 f_{36 }^3}{f_{3 }^4 f_{12 }^4 f_{18 }^3},\\
&f_1^3=\dfrac{f_6f_9^6}{f_3f_{18}^3}-3qf_9^3+4q^3 \dfrac{f_3^2f_{18}^6}{f_6^2f_9^3},& &f_1 f_{2}=\frac{f_{6 } f_{9 }^4}{f_{3 } f_{18 }^2}-qf_{9 } f_{18 }-2 q^{2 }\frac{f_{3 } f_{18 }^4}{f_{6 } f_{9 }^2}.
\end{align*}
\end{lemma}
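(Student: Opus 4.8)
The plan is to reduce the six eta-quotient 3-dissections in Lemma~\ref{Lemma-3-dissection} to a short list of 3-dissections of single theta functions and then to obtain the rest by elementary algebra. The key observation is that each left-hand side is, up to the conventions of the excerpt, one of Ramanujan's classical theta functions or a reciprocal of one: indeed $f_1^2/f_2=\varphi(-q)$ and $f_1f_4/f_2=\psi(-q)$, while $f_2/f_1^2$ and $f_2/(f_1f_4)$ are their reciprocals, $f_1^3$ is Jacobi's cube, and $f_1f_2=\varphi(-q)\psi(q)$. So the strategy is threefold: (a) prove the dissections of the ``primitive'' functions $\varphi(-q)$, $\psi(-q)$, $\psi(q)$, and $f_1^3$ directly on the series side; (b) deduce the reciprocals by a rationalization trick; and (c) assemble $f_1f_2$ from the product $\varphi(-q)\psi(q)$.

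First I would dissect $\varphi(-q)=\sum_{n=-\infty}^{\infty}(-1)^n q^{n^2}$ by splitting the index as $n=3m$ or $n=3m\pm1$. The terms with $n\equiv0$ resum to $\varphi(-q^9)=f_9^2/f_{18}$, while the $n\equiv\pm1$ terms combine (after sending $m\mapsto-m$ in one of them) to $-2q\,f(-q^3,-q^{15})$; applying the triple product \eqref{jtpi} and reducing the resulting infinite product shows $f(-q^3,-q^{15})=f_3f_{18}^2/(f_6f_9)$, which is exactly the second term. The same index-splitting, applied to the squares in $\varphi(\pm q)$, to the triangular numbers $n(n+1)/2$ underlying $\psi(q)=f_2^2/f_1$ and $\psi(-q)=f_1f_4/f_2$, and to Jacobi's expansion $f_1^3=\sum_{n\ge0}(-1)^n(2n+1)q^{n(n+1)/2}$, produces the remaining primitive dissections. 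For $f_1^3$ the class $n\equiv1\pmod 3$ immediately yields the clean middle term $-3q\,f_9^3$, since $\sum_m(-1)^m(2m+1)q^{9m(m+1)/2}=f_9^3$.

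Next I would handle the reciprocals $f_2/f_1^2=1/\varphi(-q)$ and $f_2/(f_1f_4)=1/\psi(-q)$ via the factorization $(A+qB)(A^2-qAB+q^2B^2)=A^3+q^3B^3$. Writing a two-term dissection as $\varphi(-q)=A(q^3)+q\,B(q^3)$, this gives $1/\varphi(-q)=(A^2-qAB+q^2B^2)/(A^3+q^3B^3)$, whose denominator is a function of $q^3$ and whose numerator is already sorted by residue; expanding term by term explains why the reciprocal of a two-term dissection is precisely the three-term dissection recorded for $f_2/f_1^2$ and $f_2/(f_1f_4)$. Finally, $f_1f_2=\varphi(-q)\psi(q)$, so its dissection follows by multiplying the (now known) dissections of $\varphi(-q)$ and $\psi(q)$ and collecting the three residue classes; alternatively one can split $f(-q)f(-q^2)$ directly using the addition formulas \eqref{ThetaIdentity2}--\eqref{ThetaIdentity3} of Lemma~\ref{ThetaIdentityLemma}.

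The main obstacle is not the index-splitting, which is routine, but the two eta-quotient manipulations it leaves behind. The first is converting each residue-class theta sum $f(-q^a,-q^b)$ into the exact eta-quotient $f_n^{\,\cdots}$ of the statement; this requires careful use of the triple product in reverse together with standard product rearrangements (as in the reduction $f(-q^3,-q^{15})=f_3f_{18}^2/(f_6f_9)$ above), and in the reciprocal case one must also verify that the ``norm'' $A^3+q^3B^3$ collapses to a single eta-quotient in $q^3$. The second, for $f_1^3$ and $f_1f_2$, is combining the two surviving nontrivial classes ($n\equiv0$ and $n\equiv2\pmod 3$) into the single quotients $f_6f_9^6/(f_3f_{18}^3)+4q^3f_3^2f_{18}^6/(f_6^2f_9^3)$ and its analogue, which is where the addition theorems of Lemma~\ref{ThetaIdentityLemma} do the real work. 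Since all six are classical identities, in practice I would either carry out this bookkeeping once and for all or simply cite Hirschhorn's treatment, exactly as the excerpt does.
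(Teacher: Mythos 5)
Your outline is sound, but note first that the paper does not prove Lemma \ref{Lemma-3-dissection} at all: it is quoted verbatim from Hirschhorn \cite{Hirsch1}, so there is no in-paper argument to compare against. Measured against the standard derivations, your plan is essentially the classical one and works cleanly for five of the six identities. The identifications $f_1^2/f_2=\varphi(-q)$, $f_1f_4/f_2=\psi(-q)$, $f_1f_2=\varphi(-q)\psi(q)$ are correct; the mod-$3$ index split of $\varphi(-q)$ does give $\varphi(-q^9)-2qf(-q^3,-q^{15})$ with $f(-q^3,-q^{15})=f_3f_{18}^2/(f_6f_9)$ via \eqref{jtpi}; the rationalization $1/(A-qB)=(A^2+qAB+q^2B^2)/(A^3-q^3B^3)$ (equivalently, multiplying by $\varphi(-\omega q)\varphi(-\omega^2 q)$) does produce the three-term reciprocal dissections once the $q^3$-denominator is evaluated as an eta-quotient; and multiplying the two-term dissections of $\varphi(-q)$ and $\psi(q)$ reproduces the stated expansion of $f_1f_2$ exactly, including the cancellation $A_0B_1-2A_1B_0=-f_9f_{18}$, with no appeal to the addition theorems needed.

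The one genuine soft spot is $f_1^3$. Splitting Jacobi's series $\sum_{n\ge0}(-1)^n(2n+1)q^{n(n+1)/2}$ modulo $3$ does give $-3qf_9^3$ from the class $n\equiv1$, but the classes $n\equiv0$ and $n\equiv2$ merge into the single bilateral sum $\sum_{m\in\mathbb{Z}}(-1)^m(6m+1)q^{(9m^2+3m)/2}$, which carries the linear weight $6m+1$ and is therefore \emph{not} a theta function or a product of them. The addition theorems \eqref{ThetaIdentity2}--\eqref{ThetaIdentity3} of Lemma \ref{ThetaIdentityLemma} only manipulate products $f(a,b)f(c,d)$ of unweighted theta series, so they cannot, as you suggest, ``do the real work'' of converting this weighted sum into $f_6f_9^6/(f_3f_{18}^3)+4q^3f_3^2f_{18}^6/(f_6^2f_9^3)$. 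To close this case you need a different device: either cube the full three-term $3$-dissection of $f_1$ coming from the pentagonal number theorem and simplify the cross terms, or invoke the known evaluation of $\sum(-1)^n(6n+1)q^{n(3n+1)/2}$ as an eta-quotient combination (this is precisely the content of Hirschhorn's treatment). Since you explicitly flag this as the place where you would fall back on \cite{Hirsch1}, the proposal is acceptable as a plan, but as written the $f_1^3$ step would not go through by the stated means.
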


\begin{lemma}\label{5dissectf1phipsi}\textup{(\cite[p. 80, Entry 38(iv) and p. 49, Corollary]{B1})}
We have
\begin{align}
\label{5df_1}
f_1&=f_{25}\left(\dfrac{1}{R\left(q^5\right)}-q-q^2R\left(q^5\right)\right),\\
\label{5dissectPhi}\varphi(q)&=\varphi\left(q^{25}\right)+2qf\left(q^{15},q^{35}\right)+2q^{4}f\left(q^{5},q^{45}\right),\\
\label{5dissectPsi}\psi(q)&=f\left(q^{10},q^{15}\right)+qf\left(q^{5},q^{20}\right)+q^3\psi\left(q^{25}\right).
\end{align}
\end{lemma}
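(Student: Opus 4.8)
The three identities are all instances of a $5$-dissection, and the plan is to prove each by writing the left-hand side as a theta series, splitting the summation index modulo $5$, and reassembling each residue class back into a theta function via the Jacobi triple product identity \eqref{jtpi}. A running sanity check throughout is to compare low-order coefficients on both sides.

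For \eqref{5dissectPhi} I would start from $\varphi(q)=\sum_{n=-\infty}^{\infty}q^{n^2}$ and write $n=5m+j$ with $j\in\{0,\pm1,\pm2\}$. Since $n^2\equiv j^2\pmod5$ takes only the values $0,1,4$, the residues $2$ and $3$ are absent, which already explains why only the powers $q^0,q^1,q^4$ survive. Collecting the $j=0$ terms gives $\varphi(q^{25})$, the pair $j=\pm1$ gives $2qf(q^{15},q^{35})$, and the pair $j=\pm2$ gives $2q^{4}f(q^{5},q^{45})$, where each inner sum $\sum_m q^{25m^2\pm10m}$ (resp. $\sum_m q^{25m^2\pm20m}$) is identified as the displayed $f(a,b)$ by matching its quadratic and linear coefficients against the definition \eqref{jtpi}. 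The argument for \eqref{5dissectPsi} is the same in spirit but uses $\psi(q)=\sum_{n\ge0}q^{n(n+1)/2}$: the triangular numbers satisfy $T_n\bmod5\in\{0,1,3\}$ as $n$ runs through residues $0,\dots,4$, so the residues $2,4$ drop out. The class $n\equiv2\pmod5$ gives $T_{5m+2}=25T_m+3$, hence the term $q^{3}\psi(q^{25})$, while the remaining classes reassemble, after reindexing, into $f(q^{10},q^{15})$ and $qf(q^{5},q^{20})$.

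The identity \eqref{5df_1} is the substantial one. I would begin with Euler's pentagonal number theorem $f_1=\sum_{n=-\infty}^{\infty}(-1)^{n}q^{n(3n-1)/2}$ (the case $a=-q,\,b=-q^{2}$ of \eqref{jtpi}), writing $g(n):=n(3n-1)/2$ and noting that $g(n+5)\equiv g(n)\pmod5$, with $g(n)\bmod5\in\{0,1,2\}$ and the residues $3,4$ never attained; splitting the sum modulo $5$ thus yields exactly three pieces, matching the shape $f_{25}\bigl(\tfrac1{R(q^5)}-q-q^{2}R(q^5)\bigr)$. The residue-$1$ class ($n\equiv1$) is the easy one: from $g(5m+1)=25\,g(-m)+1$ together with the sign $(-1)^{5m+1}$, it collapses to $-q\,f_{25}$, which supplies the middle term. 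The residue-$0$ piece, however, collects the two index classes $n\equiv0,2$ and the residue-$2$ piece the classes $n\equiv3,4$; each must be reindexed to a \emph{single} theta function, after which the two resulting theta quotients have to be recognized as $f_{25}/R(q^5)$ and $f_{25}\,R(q^5)$ respectively.

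This last recognition is the main obstacle: it is not mere index bookkeeping but requires converting the reassembled theta functions into infinite products and matching them against $R(q^5)=\dfrac{(q^5;q^{25})_\infty(q^{20};q^{25})_\infty}{(q^{10};q^{25})_\infty(q^{15};q^{25})_\infty}$. I expect the cleanest route is via the quintuple product identity, which produces precisely the products $(q^5,q^{20};q^{25})_\infty$ and $(q^{10},q^{15};q^{25})_\infty$ out of the relevant pentagonal subseries; alternatively one can invoke Ramanujan's theta-function factorizations directly, as in Berndt's treatment of Entry~38(iv). As a final check, both sides begin $1-q-q^{2}+q^{5}+q^{7}-q^{12}-q^{15}+\cdots$, confirming the sign pattern and the placement of the three residue classes (indeed the residue-$2$ part factors as $-q^{2}\bigl(1-q^{5}+q^{10}-q^{20}+\cdots\bigr)f_{25}=-q^{2}R(q^5)f_{25}$, exactly as required).
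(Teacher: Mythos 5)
The paper does not prove this lemma at all; it is quoted verbatim from Berndt's \emph{Ramanujan's Notebooks, Part III} (Entry 38(iv) and the Corollary on p.~49), so there is no internal proof to compare against. Your sketch is the standard argument behind those entries: the dissections of $\varphi$ and $\psi$ are correct, routine residue-class computations, and for \eqref{5df_1} you correctly locate the only real content in recognizing the residue-$0$ and residue-$2$ pieces of the pentagonal-number series as $f_{25}/R(q^5)$ and $f_{25}R(q^5)$, which does indeed require the quintuple product identity (or an equivalent theta factorization) rather than bare reindexing. Since that last step is named but not carried out, your write-up is an accurate and correctly structured sketch rather than a complete proof, but it follows the same route as the cited source.
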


 The first three identities in the following lemma  are in the list of the forty identities recorded by Ramanujan  \cite{memoirs}. The remaining two identities of the lemma were found by Robins \cite[Chapter 1, (1.25), (1.26)]{R1}.
\begin{lemma}\label{Rama40Lemma}
We have
\begin{align}
\label{Rama1}G\left(q\right)G\left(q^4\right)-qH\left(q\right)H\left(q^4\right)&=\dfrac{f_{10}^5}{f_2f_5^2f_{20}^2},\\
\label{Rama2}G\left(-q\right)G\left(-q^4\right)+qH\left(-q\right)H\left(-q^4\right)&=\dfrac{f_4^2}{f_2f_8},\\
\label{Rama3}G\left(q^{16}\right)H\left(q\right)-q^3G\left(q\right)H\left(q^{16}\right)&=\dfrac{f_4^2}{f_2f_8},\\
\label{RobinsKey1}G^2\left(q\right)H\left(q^2\right) - G\left(q^2\right)H^2\left(q\right)&=2qH\left(q\right)H^2\left(q^2\right)\dfrac{f_{10}^2}{f_5^2},\\
\label{RobinsKey2}G^2\left(q\right)H\left(q^2\right) +G\left(q^2\right)H^2\left(q\right)&=2G\left(q\right)G^2\left(q^2\right)\dfrac{f_{10}^2}{f_5^2}.
\end{align}
\end{lemma}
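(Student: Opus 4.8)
These five identities are classical---\eqref{Rama1}--\eqref{Rama3} appear in Ramanujan's list of forty identities and \eqref{RobinsKey1}--\eqref{RobinsKey2} are Robins'---so the aim of a proof is to reduce each to a tractable statement about Ramanujan's theta functions. The plan is to first convert every Rogers--Ramanujan function into a theta quotient by the Jacobi triple product \eqref{jtpi},
\begin{align*}
G(q)=\frac{f(-q^2,-q^3)}{f_1},\qquad H(q)=\frac{f(-q,-q^4)}{f_1},\qquad G(q)H(q)=\frac{f_5}{f_1},
\end{align*}
together with the images of the first two formulas under $q\mapsto q^2,q^4,q^{16},-q$. Substituting these into a given left-hand side factors it as a fixed eta prefactor times a $\pm$-combination of \emph{products} of theta functions, while one further use of \eqref{jtpi} rewrites the target eta quotient on the right as a theta function. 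Each identity thereby becomes a purely theta-function statement to be established with the tools of Lemma~\ref{ThetaIdentityLemma}.

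The bulk of the work is arranging these theta products so that the addition formulas apply. For the three ``difference'' identities I would use \eqref{ThetaIdentity3} and for the two ``sum'' identities \eqref{ThetaIdentity2}, in each case choosing parameters $a,b,c,d$ subject to the balancing constraint $ab=cd$; whenever the two theta factors entering a product carry different moduli, I would first split one of them by the $2$-dissection \eqref{ThetaIdentity0} so that, after regrouping, the balancing condition can be met and \eqref{ThetaIdentity2}--\eqref{ThetaIdentity3} collapse the combination into a single product $2a\,f(\cdot,\cdot)f(\cdot,\cdot)$, respectively $2f(\cdot,\cdot)f(\cdot,\cdot)$. Reading this back through \eqref{jtpi} and comparing with the eta quotient on the right finishes the case. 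I expect \eqref{Rama1}--\eqref{Rama3}, which involve only products of two Rogers--Ramanujan functions, to follow from a single such reduction once the moduli are equalized.

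The genuine obstacle is \eqref{RobinsKey1} and \eqref{RobinsKey2}, whose left-hand sides contain the squares $G^2(q)$ and $H^2(q)$ and so cannot be treated by the two-factor formulas alone. Here the decisive tool is \eqref{ThetaIdentity1}, which linearizes a squared theta function; for instance, taking $a=q^2,\,b=q^3$ gives $f^2(-q^2,-q^3)=f(q^4,q^6)\,\varphi(q^5)-2q^2 f(q,q^9)\,\psi(q^{10})$, reducing $G^2(q)$ to theta functions whose modular argument is $q^5$, in harmony with the factor $f_{10}^2/f_5^2$ on the right. After linearizing both squares, the surviving products are combined by \eqref{ThetaIdentity2}--\eqref{ThetaIdentity3}. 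The delicate points throughout are the bookkeeping of the constraint $ab=cd$ and the exact determination of each sign and power of $q$, which is precisely where the classical arguments demand care; as an independent safeguard, one may note that in every case both sides are modular forms of equal weight and identical multiplier on a suitable congruence subgroup, so that by the valence formula the verification of agreement of the $q$-expansions up to an explicit finite order yields a complete mechanical confirmation of all five identities.
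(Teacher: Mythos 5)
The paper does not prove this lemma at all: \eqref{Rama1}--\eqref{Rama3} are quoted as three of Ramanujan's forty identities (with the proofs residing in the Berndt et al.\ memoir \cite{memoirs}), and \eqref{RobinsKey1}--\eqref{RobinsKey2} are quoted from Robins' thesis \cite{R1}. So there is no in-paper argument to compare against; what can be said is that your strategy closely parallels the method the authors \emph{do} use for the genuinely new companion identities in Lemma \ref{NewLemmaGH} --- namely, linearize squares by \eqref{ThetaIdentity1}, combine two-factor products by \eqref{ThetaIdentity2}--\eqref{ThetaIdentity3}, and clean up with \eqref{jtpi}. For \eqref{RobinsKey1}--\eqref{RobinsKey2} in particular, your proposed substitution $f^2(-q^2,-q^3)=f(q^4,q^6)\varphi(q^5)-2q^2f(q,q^9)\psi(q^{10})$ is exactly the move made in \eqref{NewGH1Key11}--\eqref{NewGH1Key21}, so that part of the plan is credible.

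The genuine gap is that the proposal is a roadmap, not a proof: at no point is a single choice of $(a,b,c,d)$ exhibited, and the one obstruction you correctly identify is never overcome. Concretely, in \eqref{Rama1} the products $f(-q^2,-q^3)f(-q^8,-q^{12})$ and $f(-q,-q^4)f(-q^4,-q^{16})$ pair theta functions of moduli $q^5$ and $q^{20}$, and in \eqref{Rama3} the mismatch is $q^5$ against $q^{80}$, so the balancing condition $ab=cd$ of \eqref{ThetaIdentity2}--\eqref{ThetaIdentity3} fails outright; saying one would ``first split one of them by \eqref{ThetaIdentity0} so that the balancing condition can be met'' is precisely the step where the forty identities become hard, and it is not carried out. (For \eqref{Rama2} and \eqref{Rama3} the known elementary route is Watson's $2$-dissection \eqref{2-dissectGH} combined with \eqref{Rama1} under $q\mapsto -q^4$ --- the computation the paper performs in \eqref{Modular5Key2}--\eqref{M3} for the companion identities --- but that still leaves \eqref{Rama1} itself needing a separate argument.) The valence-formula fallback would indeed constitute a rigorous proof, but as stated it is only an assertion: no congruence subgroup, weight, multiplier system, or Sturm bound is specified, and no coefficient check is performed. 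As it stands, none of the five identities is actually established.
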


In the next lemma we present three new identities for $G(q)$ and $H(q)$.
\begin{lemma}\label{NewLemmaGH}
We have
\begin{align}
\label{newGH1}G^3\left(q^2\right)H\left(q\right)+qG\left(q\right)H^3\left(q^2\right)&=\dfrac{f_2 f_{10}^9}{f_1 f_4 f_5^5 f_{20}^3}+4q^2\dfrac{f_4 f_{20}^3 }{f_2^2 f_5^2},\\
\label{newGH2}G\left(q^{16}\right)H\left(q\right)+q^3G\left(q\right)H\left(q^{16}\right)&=\dfrac{f_{20}^2}{f_2f_{40}}+2q^3\dfrac{f_8f_{80}}{f_2f_{16}},\\
\label{newGH3}G\left(-q\right)G\left(-q^4\right)-qH\left(-q\right)H\left(-q^4\right)&=\dfrac{f_{20}^2}{f_2f_{40}}-2q\dfrac{f_4f_{16}f_{40}^3}{f_2f_8^2f_{20}f_{80}}.
\end{align}
\end{lemma}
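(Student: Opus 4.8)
The plan is to reduce each of the three identities to an identity among the $f_n$ (equivalently, among Ramanujan's theta functions) and then verify that identity with the $2$-dissections of Lemma \ref{Lemma-2-dissection} together with the addition/subtraction formulas \eqref{ThetaIdentity2}--\eqref{ThetaIdentity3}. The key structural observation is that \eqref{newGH2} and \eqref{newGH3} are exactly the sign-companions of the known identities \eqref{Rama3} and \eqref{Rama2}: the left-hand sides differ only in the sign of the term carrying the explicit power of $q$, and the right-hand sides of \eqref{Rama2} and \eqref{Rama3} happen to coincide. Conceptually, each pair is the pair of outputs of \eqref{ThetaIdentity2} (for the sum) and \eqref{ThetaIdentity3} (for the difference) fed the same theta arguments; since the ``$+$'' member is already among Ramanujan's forty identities, only the companion is new, and this is what makes the ``sum and product'' technique available.

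Concretely, for \eqref{newGH2} I would set $A=G(q^{16})H(q)$ and $B=q^3G(q)H(q^{16})$, so that \eqref{Rama3} gives $A-B=f_4^2/(f_2f_8)$. The product $AB$ collapses, because the cross terms reassemble into $G(q)H(q)$ and $G(q^{16})H(q^{16})$; using $G(q)H(q)=f_5/f_1$ and its dilate $G(q^{16})H(q^{16})=f_{80}/f_{16}$ one finds $AB=q^3f_5f_{80}/(f_1f_{16})$. Then \eqref{newGH2} becomes, through $(A+B)^2=(A-B)^2+4AB$, the single identity
\begin{align*}
\left(\frac{f_{20}^2}{f_2f_{40}}+2q^3\frac{f_8f_{80}}{f_2f_{16}}\right)^{2}=\left(\frac{f_4^2}{f_2f_8}\right)^{2}+4q^3\frac{f_5f_{80}}{f_1f_{16}}.
\end{align*}
For \eqref{newGH3} I would proceed identically with $X=G(-q)G(-q^4)$ and $Y=qH(-q)H(-q^4)$, using \eqref{Rama2} for $X+Y$ and computing $XY$ from $G(-q)H(-q)=f_1f_4f_{10}^3/(f_2^3f_5f_{20})$ (obtained from $G(q)H(q)=f_5/f_1$ and $(-q;-q)_\infty=f_2^3/(f_1f_4)$ by the substitution $q\mapsto-q$) and its dilate; the claim then reduces to $(X-Y)^2=(X+Y)^2-4XY$.

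For \eqref{newGH1} the same bookkeeping shows that the product of the two summands is $q\,[G(q^2)H(q^2)]^3[G(q)H(q)]=qf_5f_{10}^3/(f_1f_2^3)$, but here no sign-companion is listed, so I would instead route through the Robins identities \eqref{RobinsKey1}--\eqref{RobinsKey2}. Adding and subtracting them isolates $G^2(q)H(q^2)$ and $G(q^2)H^2(q)$ as $(f_{10}^2/f_5^2)$ times $G(q)G^2(q^2)\pm qH(q)H^2(q^2)$; combined with $G(q)H(q)=f_5/f_1$ and $G(q^2)H(q^2)=f_{10}/f_2$ these four relations determine, in principle, all degree-four monomials in $G(q),H(q),G(q^2),H(q^2)$, and hence $G^3(q^2)H(q)+qG(q)H^3(q^2)$. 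Equivalently, dividing \eqref{newGH1} by $G(q)G^3(q^2)$ recasts it as a closed form for $R(q)+qR^3(q^2)$, which I would extract from the Robins relations for the ratio $R(q^2)/R^2(q)$.

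The main obstacle in every case is the final verification of the reduced identity among the $f_n$, which is genuine rather than formal: the right-hand sides carry $f_1$ and $f_5$ in denominators that are absent from the left, so no term-by-term matching is possible. I would dispatch these by $2$-dissecting each side with Lemma \ref{Lemma-2-dissection} and matching even and odd parts separately, reducing to lower-level theta identities handled by \eqref{ThetaIdentity2}--\eqref{ThetaIdentity3}; as a uniform fallback, each reduced identity is an equality of modular forms on a congruence subgroup and can be certified by checking finitely many $q$-coefficients. The most delicate point is for \eqref{newGH1}: one must produce the correct companion (the difference $G^3(q^2)H(q)-qG(q)H^3(q^2)$, or equivalently the closed form for $R(q)+qR^3(q^2)$) so that the square root implicit in the sum-and-product step is itself a quotient of the $f_n$.
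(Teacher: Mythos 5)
Your sum-and-product scheme is logically sound as far as it goes, and your bookkeeping is right: for \eqref{newGH2}, $AB=q^3f_5f_{80}/(f_1f_{16})$ and \eqref{Rama3} gives $A-B$, so $(A+B)^2$ is determined; similarly for \eqref{newGH3} via \eqref{Rama2}. The square-root sign is also fixed by the constant terms. But the step you defer is the entire content of the lemma. Squaring converts each identity into another eta-quotient identity of exactly the same depth (your reduced identity for \eqref{newGH2} still carries $f_5/f_1$ against pure $f_{2k}$'s, so it again needs the $2$-dissection of $f_5/f_1$ and, after extracting the even part, a further nontrivial theta identity), and you have not proved any of these reduced identities; ``check finitely many coefficients'' is a different method that you have not set up (weights, levels, Sturm bound). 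The paper avoids the squaring detour entirely: for \eqref{newGH2} and \eqref{newGH3} it substitutes Watson's $2$-dissections \eqref{2-dissectGH} into the left-hand sides, which reduces both directly to $G(-q^4)G(q^{16})+q^4H(-q^4)H(q^{16})$ plus a $G H$-product term, and the former is just \eqref{Rama1} with $q\mapsto -q^4$. That is a strictly linear argument -- no squares, no square roots, no residual identity to verify.

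The plan for \eqref{newGH1} has a more serious flaw. The four relations you list (the two Robins identities, $G(q)H(q)=f_5/f_1$, $G(q^2)H(q^2)=f_{10}/f_2$) do \emph{not} determine all degree-four monomials in $G(q),H(q),G(q^2),H(q^2)$. What they give you is precisely that $G^3(q^2)H(q)+qG(q)H^3(q^2)$ is a fixed eta-quotient multiple of $R^2(q)/R(q^2)+R(q^2)/R^2(q)$, together with a closed form for the \emph{difference} $R(q^2)/R^2(q)-R^2(q)/R(q^2)$; to get the \emph{sum} you must square again, landing on yet another unproven eta-quotient identity. Worse, this route is in tension with the paper's own logic: the paper proves \eqref{newGH1} first, by purely theta-functional means -- applying \eqref{ThetaIdentity1} to $f(q,q^4)$ and $f(q^2,q^3)$, and \eqref{ThetaIdentity2}--\eqref{ThetaIdentity3} to collapse the cross terms into $f(q)f(q^5)\varphi(q^5)+4q^2f_4f_{20}\psi(q^{10})$ -- and only then uses \eqref{newGH1} to evaluate $R^2(q)/R(q^2)+R(q^2)/R^2(q)$ in Theorem \ref{Theo5_1}(iii). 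So the closed form you propose to ``extract from the Robins relations'' is downstream of \eqref{newGH1}, not upstream of it, and your argument as sketched would be circular unless you supply an independent proof of one of the two.
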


\begin{proof}
Setting $a=-q,b=-q^4$ and $a=-q^2,b=-q^3$, in turn,  in \eqref{ThetaIdentity1}, we have
\begin{align}
\label{NewGH1Key11}f^2\left(q,q^4\right)&=f\left(q^2,q^8\right)\varphi(q^5)+2q f\left(q^3,q^7\right)\psi\left(q^{10}\right),\\
\label{NewGH1Key21}f^2\left(q^2,q^3\right)&=f\left(q^4,q^6\right)\varphi(q^5)+2q^2 f\left(q,q^9\right)\psi\left(q^{10}\right).
\end{align}
Multiplying \eqref{NewGH1Key11} by $qf\left(-q,-q^9\right)$ and \eqref{NewGH1Key21} by $f\left(-q^3,-q^7\right)$, and then adding the resulting identities, we find that
\begin{align}
\label{ModularKey1}f^2\left(q^2,q^3\right)&f\left(-q^3,-q^7\right)+qf^2\left(q,q^4\right)f\left(-q,-q^9\right)\notag\\
&=\varphi(q^5)\left(f\left(q^4,q^6\right)f\left(-q^3,-q^7\right)+qf\left(q^2,q^8\right)f\left(-q,-q^9\right)\right)\notag\\
&\quad+2q^2\psi\left(q^{10}\right)\left(f\left(-q^3,-q^7\right)f\left(q,q^9\right)+f\left(q^3,q^7\right)f\left(-q,-q^9\right)\right).
\end{align}

Now, setting $a=q,b=-q^4,c=-q^2,$ and $d=q^3$  in \eqref{ThetaIdentity2} and \eqref{ThetaIdentity3} and then adding the resulting identities, we obtain
\begin{align*}
f\left(q^4,q^6\right)f\left(-q^3,-q^7\right)+qf\left(q^2,q^8\right)f\left(-q,-q^9\right)&=f\left(q,-q^4\right)f\left(-q^2,q^3\right),
\end{align*}
which, by the Jacobi triple product identity \eqref{jtpi}, reduces to
\begin{align}\label{gh1}
f\left(q^4,q^6\right)f\left(-q^3,-q^7\right)+qf\left(q^2,q^8\right)f\left(-q,-q^9\right)=f(q)f(q^5).
\end{align}

On the other hand, setting $a=q,b=q^9,c=-q^3,$ and $d=-q^7$  in \eqref{ThetaIdentity2}, and then using \eqref{jtpi}, we have
\begin{align}\label{gh2}
f\left(-q^3,-q^7\right)f\left(q,q^9\right)+f\left(q^3,q^7\right)f\left(-q,-q^9\right)&=2f_{4}f_{20}.
\end{align}
Employing \eqref{gh1} and \eqref{gh2} in \eqref{ModularKey1}, we arrive at
\begin{align}
\label{QQQ}f^2\left(q^2,q^3\right)f\left(-q^3,-q^7\right)+qf^2\left(q,q^4\right)f\left(-q,-q^9\right)&=f(q)f(q^5)\varphi\left(q^5\right)\notag\\
&\quad+4q^2f_4f_{20}\psi\left(q^{10}\right).
\end{align}

Now, using \eqref{jtpi} and the $q$-product representations of $G(q)$ and $H(q)$, we find that
\begin{align*}
&f\left(q,q^4\right)=f_5\dfrac{G(q)}{G\left(q^2\right)}, \quad f\left(-q,-q^9\right)=f_{10}\dfrac{H\left(q^2\right)}{G\left(q\right)},\quad f\left(q^2,q^3\right)=f_{5}\dfrac{H\left(q\right)}{H\left(q^2\right)}, \notag\\
&f\left(-q^3,-q^7\right)=f_{10}\dfrac{G\left(q^2\right)}{H\left(q\right)}.
\end{align*}
Employing these above identities in \eqref{QQQ} and also noting that
\begin{align}\label{GHf5}
G\left(q\right)H\left(q\right)=\dfrac{f_5}{f_1},\\
\label{fq}f(q)=\dfrac{f_2^3}{f_1f_4},
\end{align}we obtain
\begin{align*}
G^3\left(q^2\right)H\left(q\right)+qG\left(q\right)H^3\left(q^2\right)=\dfrac{f_2 f_{10}^9}{f_1 f_4 f_5^5 f_{20}^3}+4q^2\dfrac{f_4 f_{20}^3 }{f_2^2 f_5^2}.
\end{align*} Thus, we complete the proof of \eqref{newGH1}.

Next, we recall the 2-dissections of $G(q)$ and $H(q)$ due to Watson \cite{W1}, namely,
\begin{align}\label{2-dissectGH}
G\left(q\right)=\dfrac{f_8}{f_2}\left(G\left(q^{16}\right)+qH\left(-q^4\right)\right),\quad H\left(q\right)=\dfrac{f_8}{f_2}\left(G\left(-q^4\right)+q^3H\left(q^{16}\right)\right).
\end{align}
Therefore,
\begin{align}
\label{Modular5Key2}G\left(q^{16}\right)H\left(q\right)+q^3G\left(q\right)H\left(q^{16}\right)&=\dfrac{f_8}{f_2}\bigg(2q^3G\left(q^{16}\right)H\left(q^{16}\right)+G\left(-q^4\right)G\left(q^{16}\right)\notag\\
&\quad+q^4H\left(-q^4\right)H\left(q^{16}\right)\bigg).
\end{align}

Now, replacing $q$ by $-q^4$ in \eqref{Rama1}, we have
\begin{align}
\label{M2}G\left(-q^4\right)G\left(q^{16}\right)+q^4H\left(-q^4\right)H\left(q^{16}\right)=\dfrac{f_{40}^5}{f_8f_{80}^2f^2(q^{20})}=\dfrac{f_{20}^2}{f_8f_{40}},
\end{align}
where we  use \eqref{fq} in the last equality. Employing \eqref{GHf5} and \eqref{M2} in \eqref{Modular5Key2}, we arrive at \eqref{newGH2}.

Next, with the aid of \eqref{2-dissectGH} with $q$ replaced by $-q$, we have
\begin{align}\label{M3}
G\left(-q\right)G\left(-q^4\right)-qH\left(-q\right)H\left(-q^4\right)&=\dfrac{f_8}{f_2}\bigg(G\left(-q^4\right)G\left(q^{16}\right)+q^4H\left(-q^4\right)H\left(q^{16}\right)\notag\\
&\quad-2qG\left(-q^{4}\right)H\left(-q^{4}\right)\bigg).
\end{align}
Employing \eqref{M2}, \eqref{GHf5}, and \eqref{fq} in \eqref{M3}, we arrive at \eqref{newGH3}.
\end{proof}

The first identity in the next lemma was communicated by Ramanujan in his first letter to Hardy whereas the remaining two   were proved by Gugg \cite[Theorem 3.1]{G1}.
\begin{lemma}\label{GuggLemma} We have
\begin{align}\label{R5q5}
\dfrac{R^5\left(q\right)}{R\left(q^5\right)}=\dfrac{1-2 q R\left(q^5\right)+4 q^2 R^2\left(q^5\right)-3 q^3 R^3\left(q^5\right)+q^4 R^4\left(q^5\right)}{1+3 q R\left(q^5\right)+4 q^2 R^2\left(q^5\right)+2 q^3 R^3\left(q^5\right)+q^4 R^4\left(q^5\right)},\\
\label{R5q5N}1-2 q R\left(q^5\right)+4 q^2 R^2\left(q^5\right)-3 q^3 R^3\left(q^5\right)+q^4 R^4\left(q^5\right)=R^2\left(q^5\right)\dfrac{H^5(q)f_1^2}{H\left(q^5\right)f_{25}^2},\\
\label{R5q5D}1+3 q R\left(q^5\right)+4 q^2 R^2\left(q^5\right)+2 q^3 R^3\left(q^5\right)+q^4 R^4\left(q^5\right)=R^2\left(q^5\right)\dfrac{G^5(q)f_1^2}{G\left(q^5\right)f_{25}^2}.
\end{align}
\end{lemma}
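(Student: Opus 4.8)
The plan is to exploit the fact that the three identities of Lemma~\ref{GuggLemma} are tightly linked, so that two of them carry all the content. Writing $R=H/G$ as in \eqref{R-GH}, one sees at once that dividing \eqref{R5q5N} by \eqref{R5q5D} reproduces the right-hand side of \eqref{R5q5}, since
$$\frac{H^5(q)/H(q^5)}{G^5(q)/G(q^5)}=\frac{H^5(q)}{G^5(q)}\cdot\frac{G(q^5)}{H(q^5)}=\frac{R^5(q)}{R(q^5)}.$$
Hence \eqref{R5q5} is the quotient of \eqref{R5q5N} and \eqref{R5q5D}, and it suffices to prove the latter two. For bookkeeping I set $X:=G^5(q)/G(q^5)$, $Y:=H^5(q)/H(q^5)$, and abbreviate $t:=R(q^5)$, $u:=qt$, so that the numerator and denominator polynomials of \eqref{R5q5}, rewritten in $u=qR(q^5)$, are $N(u)=1-2u+4u^2-3u^3+u^4$ and $D(u)=1+3u+4u^2+2u^3+u^4$.

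The product $XY$ is essentially free. From \eqref{GHf5} we have $G(q)H(q)=f_5/f_1$ and $G(q^5)H(q^5)=f_{25}/f_5$, whence
$$XY=\frac{[G(q)H(q)]^5}{G(q^5)H(q^5)}=\frac{f_5^6}{f_1^5f_{25}}.$$
Now convert to the variable $u$. The $5$-dissection \eqref{5df_1} gives $f_1/f_{25}=1/t-q-q^2t=(1-u-u^2)/t$, and the classical fifth-power identity $1/R^5(q)-11q-q^2R^5(q)=f_1^6/f_5^6$, applied with $q\mapsto q^5$, gives $f_5^6/f_{25}^6=(1-11u^5-u^{10})/t^5$. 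Substituting both into the displayed formula collapses all the eta-quotients and yields the clean rational function $XY=(1-11u^5-u^{10})/(1-u-u^2)^5$. A short polynomial check shows $1-11u^5-u^{10}=(1-u-u^2)\,N(u)D(u)$, so in fact $XY=N(u)D(u)/(1-u-u^2)^4$.

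The remaining input is one further relation between $X$ and $Y$, and here I would invoke the classical modular equation \eqref{R5q5} itself (Ramanujan's first letter to Hardy), which says $X/Y=D(u)/N(u)$. Combining the product $XY$ with this ratio gives $X^2=XY\cdot(X/Y)=D(u)^2/(1-u-u^2)^4$, and since every quantity is a power series with constant term $1$ as $q\to0$ (because $R(q^5)\to1$, hence $u\to0$), the positive root is forced: $X=D(u)/(1-u-u^2)^2$ and $Y=N(u)/(1-u-u^2)^2$. Finally, multiplying by $t^2f_1^2/f_{25}^2=(1-u-u^2)^2$ turns these into $t^2G^5(q)f_1^2/(G(q^5)f_{25}^2)=D(u)$ and $t^2H^5(q)f_1^2/(H(q^5)f_{25}^2)=N(u)$, which are exactly \eqref{R5q5D} and \eqref{R5q5N}; dividing them recovers \eqref{R5q5}.

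The main obstacle is the single genuinely modular ingredient: the product relation for $XY$ is pure bookkeeping, but separating $X$ from $Y$ requires a second, independent relation, namely Ramanujan's equation \eqref{R5q5} relating $R(q)$ and $R(q^5)$. If one wants a self-contained argument rather than citing Ramanujan, then \eqref{R5q5} (equivalently, a direct theta-function evaluation of the numerator $N(u)$, which is Gugg's route) must be established, e.g.\ from the $5$-dissection of $f(-q)$ together with Ramanujan's $u$--$v$ theory of the continued fraction. A secondary point to watch is the $q$-power bookkeeping: because the paper normalizes $R(q)$ as a pure product, without the customary factor $q^{1/5}$, the classical fifth-power identity must be rescaled accordingly, and this is precisely what produces the homogeneous variable $u=qR(q^5)$ and hence the powers of $q$ appearing in \eqref{R5q5}--\eqref{R5q5D}.
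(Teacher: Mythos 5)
The paper does not actually prove this lemma: it attributes \eqref{R5q5} to Ramanujan's first letter to Hardy and \eqref{R5q5N}--\eqref{R5q5D} to Gugg, so there is no internal argument to compare against, and your derivation is a genuine alternative to Gugg's (who establishes \eqref{R5q5N} and \eqref{R5q5D} directly by theta-function/5-dissection manipulation and gets \eqref{R5q5} as the quotient). Your computations all check out: with $u=qR(q^5)$, the product $XY=f_5^6/(f_1^5f_{25})$ follows from \eqref{GHf5}; the conversion via \eqref{5df_1} and the rescaled fifth-power identity gives $XY=(1-11u^5-u^{10})/(1-u-u^2)^5$; the factorization $1-11u^5-u^{10}=(1-u-u^2)N(u)D(u)$ is correct (I verified the coefficients of $N(u)D(u)=1+u+2u^2+3u^3+5u^4-3u^5+2u^6-u^7+u^8$ and of the triple product); and the square-root extraction is legitimate since $X$ and $D(u)/(1-u-u^2)^2$ are power series with constant term $1$ in the integral domain $\mathbb{Q}[[q]]$. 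What your route buys is economy: two of the three identities come nearly for free from the first. What it costs is self-containedness, in two respects you flag but which deserve emphasis. First, the argument is circular on \eqref{R5q5} itself: you obtain it by dividing \eqref{R5q5N} by \eqref{R5q5D}, but prove those two by invoking \eqref{R5q5}, so \eqref{R5q5} is never established and must be imported wholesale from the literature (exactly as the paper does). Second, you need the fifth-power identity $1/R^5(q)-11q-q^2R^5(q)=f_1^6/f_5^6$, which is classical but of comparable depth to \eqref{R5q5} and is not among the paper's stated preliminaries; calling the product relation ``pure bookkeeping'' undersells this input, and both citations should be made explicit (e.g.\ Berndt, \emph{Ramanujan's Notebooks, Part III}, Chapter 19). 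Neither point is an error, but as written your proposal proves only two-thirds of the lemma from the remaining third.
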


In the next lemma, we present a couple of identities connecting $S(q)$ and $T(q)$ defined in \eqref{Sq} and \eqref{Tq}.
\begin{lemma}\label{LemmaSTModular}
 We have
\begin{align}
\label{ModularIdentityST1}S\left(-q\right)T\left(q\right)S\left(q^4\right)-q^2  S\left(q\right)T\left(-q\right)T\left(q^4\right)&= \dfrac{f_1f_{16}^3}{f_2f_4f_8f_{32}},\\
\label{ModularIdentityST2}S\left(-q\right)T\left(q\right)S\left(q^4\right)+q^2  S\left(q\right)T\left(-q\right)T\left(q^4\right)&= \dfrac{f_1 f_8^4f_{16} }{f_2^3 f_4^2 f_{32}}+4 q^3\dfrac{ f_1 f_{32}^3 }{f_2^3 f_{16}}.
\end{align}
\end{lemma}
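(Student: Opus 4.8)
The plan is to establish the two identities in Lemma~\ref{LemmaSTModular} by the same theta-function machinery used in the proof of Lemma~\ref{NewLemmaGH}, exploiting the analogy between the Rogers--Ramanujan pair $(G,H)$ and the G\"ollnitz--Gordon pair $(S,T)$. First I would record the Jacobi-triple-product representations of $S$ and $T$ that parallel \eqref{GHf5}: from \eqref{Sq} and \eqref{Tq} one has $S(q)=1/(q,q^4,q^7;q^8)_\infty$ and $T(q)=1/(q^3,q^4,q^5;q^8)_\infty$, and in particular the product $S(q)T(q)$ collapses to a quotient of the $f_n$'s, together with expressions for the relevant general theta functions $f(q,q^7)$, $f(q^3,q^5)$, $f(-q,-q^7)$, $f(-q^3,-q^5)$ as $f_8$ times ratios of $S,T$ evaluated at $q$ and $q^2$. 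These are the G\"ollnitz--Gordon analogues of the four identities displayed just before \eqref{GHf5}, and deriving them cleanly is the necessary setup.

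Next I would feed the base $a,b$ into the quadratic theta identity \eqref{ThetaIdentity1}. The natural choices, mirroring \eqref{NewGH1Key11}--\eqref{NewGH1Key21}, are $a=-q,\,b=-q^7$ and $a=-q^3,\,b=-q^5$ (both with $ab=q^8$), which produce $f^2(q,q^7)$ and $f^2(q^3,q^5)$ as combinations of $\varphi(q^8)$ and $\psi(q^{16})$ against theta functions in $q$. I would then form the correct linear combination of these two expansions --- multiplying one by a suitable monomial times a single theta factor and adding --- so that the right-hand side organizes into $\varphi(q^8)$ times a sum of theta products plus a $\psi(q^{16})$ term, exactly as in \eqref{ModularKey1}. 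The sum and difference identities \eqref{ThetaIdentity2} and \eqref{ThetaIdentity3}, applied to appropriately signed arguments and simplified by \eqref{jtpi}, should telescope the two bracketed theta-product sums into single clean products (the analogues of \eqref{gh1} and \eqref{gh2}); adding and subtracting the two resulting master identities then yields \eqref{ModularIdentityST1} and \eqref{ModularIdentityST2}.

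Alternatively, and perhaps more efficiently for the $+$ case, I would try the route used for \eqref{newGH2}--\eqref{newGH3}: invoke a $2$-dissection of $S(q)$ and $T(q)$ (the G\"ollnitz--Gordon counterpart of Watson's \eqref{2-dissectGH}) to write $S(q),T(q)$ in terms of $S,T$ at $q^8$ and $q^{16}$ with a prefactor of the form $f_?/f_?$, substitute into $S(-q)T(q)S(q^4)\pm q^2 S(q)T(-q)T(q^4)$, collect the cross terms, and reduce the surviving $S(q^{16})T(q^{16})$-type products via the analogue of \eqref{GHf5} and a base G\"ollnitz--Gordon modular relation (the analogue of \eqref{Rama1} obtained from \eqref{Rama1} by the known $G,H\leftrightarrow S,T$ dictionary, or proved directly). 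I expect the main obstacle to be purely bookkeeping: pinning down the precise G\"ollnitz--Gordon analogues of \eqref{gh1}, \eqref{gh2}, and the $2$-dissection, and verifying that the $f_n$-quotients on the right-hand sides simplify to exactly $\tfrac{f_1f_{16}^3}{f_2f_4f_8f_{32}}$ and $\tfrac{f_1 f_8^4 f_{16}}{f_2^3 f_4^2 f_{32}}+4q^3\tfrac{f_1 f_{32}^3}{f_2^3 f_{16}}$. Throughout, repeated use of \eqref{fq} and the elementary relations among $f_n$'s will be needed to fold the various theta quotients into the stated compact forms, and matching the explicit power of $q$ (the $q^2$ and $4q^3$ coefficients) provides a useful internal consistency check.
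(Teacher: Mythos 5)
Your overall strategy is the one the paper uses for \eqref{ModularIdentityST2}: the expansions of $f^2(q,q^7)$ and $f^2(q^3,q^5)$ obtained from \eqref{ThetaIdentity1} with $a=-q,b=-q^7$ and $a=-q^3,b=-q^5$ (the paper's \eqref{Multi-q} and \eqref{ThenAdd}), combined with the product representations \eqref{STf} and reductions via \eqref{ThetaIdentity2} and the Jacobi triple product. However, as written your plan has a concrete gap at the decisive step. The left-hand side of the lemma contains the factors $S(q^4)$ and $T(q^4)$, which under \eqref{STf} become theta functions $f(-q^{12},-q^{20})$ and $f(-q^{4},-q^{28})$ in the argument $q^4$; your sketch never explains how these get absorbed into a computation carried out entirely with base-$q$ theta functions and the $\varphi(q^8)$, $\psi(q^{16})$ expansions. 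The paper's mechanism is the duplication identity $f(a,b)f(-a,-b)=f(-a^2,-b^2)f(-ab,-ab)$ (a special case of \eqref{ThetaIdentity2}), applied twice to produce \eqref{q35} and \eqref{q17}; these convert $f^2(q^3,q^5)f(q^6,q^{10})$ and $f^2(q,q^7)f(q^2,q^{14})$ into triple products containing exactly $f(-q^{12},-q^{20})$ and $f(-q^4,-q^{28})$, yielding the bridge identity \eqref{Modular8Key1}. Without this (or an equivalent device), the linear combination you describe --- ``multiplying one by a suitable monomial times a single theta factor and adding'' --- produces only \eqref{Modular8Key2}, whose left side is \emph{not} the left side of the lemma. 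There are in fact two distinct combination steps in the paper, and your sketch conflates them into one.

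Two further points. First, your claim that ``adding and subtracting the two resulting master identities'' yields both \eqref{ModularIdentityST1} and \eqref{ModularIdentityST2} is not substantiated: the paper does not prove \eqref{ModularIdentityST1} at all, but cites Xia and Yao for it, and the two identities do not come out of a single symmetric sum/difference computation (note the very different shapes of their right-hand sides). Second, your alternative route via a $2$-dissection of $S(q)$ and $T(q)$ analogous to Watson's \eqref{2-dissectGH} is plausible in spirit but would require you to actually produce and verify that dissection, which is not in the paper; as stated it is a hope rather than an argument. The evaluation $f^2(q^3,q^5)+qf^2(q,q^7)=f_4^5/(f_1f_8^2)$ (the paper's \eqref{Reduced}, used with $q$ replaced by $q^2$ to handle the $\varphi(q^8)$ coefficient in \eqref{Modular8Key2}) is another ingredient your outline would need to supply explicitly.
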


\begin{proof}
Identity \eqref{ModularIdentityST1} was found by Xia and Yao \cite[(2.7)]{XY1} whereas \eqref{ModularIdentityST2} seems to be new. So, we  prove only \eqref{ModularIdentityST2}. Using \eqref{jtpi} and manipulating the $q$-products, we have
\begin{align}
\label{STf} S(q)T(q)=\dfrac{f_2 f_8^2}{f_1 f_4^2}, ~f(-q,-q^{7})=\dfrac{f_{8}^2}{S(q)f_4}, ~f(-q^3,-q^{5})=\dfrac{f_{8}^2}{T(q)f_4}.
\end{align}

Setting $a=q^3,b= q^5,c=-q^3,$ and $d=-q^5$ in \eqref{ThetaIdentity2}, we find that
\begin{align*}
f(-q^3,-q^{5})f(q^3,q^{5})&=f(-q^6,-q^{10})f(-q^8,-q^{8}).
\end{align*}
Multiplying both sides by $f(q^6,q^{10})$ and once again using \eqref{ThetaIdentity2} with $a=q^6,$ $b= q^{10}$, $c=-q^6,$ and $d=-q^{10}$, we obtain
\begin{align}\label{q35}
f(-q^3,-q^{5})f(q^3,q^{5})f(q^6,q^{10})&=f(-q^{12},-q^{20})f(-q^8,-q^{8})f(-q^{16},-q^{16}).
\end{align}
In a similar way, we find that
\begin{align}\label{q17}
f(-q,-q^{7})f(q,q^{7})f(q^2,q^{14})&=f(-q^{4},-q^{28})f(-q^8,-q^{8})f(-q^{16},-q^{16}).
\end{align}
With the help of \eqref{q35} and \eqref{q17}, we have
\begin{align}
\label{Modular8Key1}&f(-q,-q^{7})f(q^3,q^{5})f(-q^{12},-q^{20})+q^2f(q,q^{7})f(-q^3,-q^{5})f(-q^{4},-q^{28})\notag\\
&=\dfrac{f(-q,-q^{7})f(-q^3,-q^{5})}{f(-q^8,-q^{8})f(-q^{16},-q^{16})}\Big(f^2(q^3,q^{5})f(q^6,q^{10})+q^2f^2(q,q^{7})f(q^2,q^{14})\Big).
\end{align}

Next, setting $a=-q,b=-q^7$ and  $a=-q^3,b=-q^5$, in turn, in \eqref{ThetaIdentity1}, we have
\begin{align}
\label{Multi-q} f^2(q,q^{7})&=f(q^2,q^{14})\varphi(q^8)+2q f(q^6,q^{10})\psi(q^{16}),\\
\label{ThenAdd}f^2(q^3,q^{5})&=f(q^6,q^{10})\varphi(q^8)+2q^3 f(q^2,q^{14})\psi(q^{16}).
\end{align}
With the help of the above identities, we find that
\begin{align}
\label{Reduce1} f^2(q^3,q^{5})+qf^2(q,q^{7})=\left(f(q^6,q^{10})+qf(q^2,q^{14})\right)\left(\varphi(q^8)+2q^2\psi(q^{16})\right).
\end{align}
But, setting $a=q$ and $b=q$ and $a=q$ and $b=q^3$, in turn, in  \eqref{ThetaIdentity0}, we have
\begin{align}\label{phi-2}
\varphi(q)&=\varphi\left(q^4\right)+qf(1, q^8)= \varphi\left(q^4\right)+2q\psi\left(q^{8}\right),\\
\label{psi-2}\psi(q)&=f(q^6,q^{10})+qf(q^2,q^{14}).
\end{align}
Using \eqref{phi-2} with $q$ replaced by $q^2$  and \eqref{psi-2} in \eqref{Reduce1}, and then employing the product representations of $\varphi(q^2)$ and $\psi(q)$, we find that
\begin{align}
\label{Reduced} f^2(q^3,q^{5})+qf^2(q,q^{7})=\dfrac{f_4^5}{f_1f_8^2}.
\end{align}

Now, from \eqref{Multi-q} and \eqref{ThenAdd}, we have
\begin{align}
\label{Modular8Key2}&f^2\left(q^3,q^{5}\right)f\left(q^6,q^{10}\right)+q^2f^2\left(q,q^{7}\right)f\left(q^2,q^{14}\right)\notag\\
&=\left(f^2\left(q^6,q^{10}\right)+q^2f^2\left(q^2,q^{14}\right)\right)\varphi\left(q^8\right)+4q^3f\left(q^2,q^{14}\right)f\left(q^6,q^{10}\right)\psi\left(q^{16}\right).
\end{align}
Employing \eqref{Reduced}, \eqref{jtpi}, the product representations of $\varphi(q)$ and $\psi(q)$, and then manipulating the $q$-products, we find from \eqref{Modular8Key1} and \eqref{Modular8Key2}  that
\begin{align*}
f\left(-q,-q^{7}\right)f\left(q^3,q^{5}\right)f\left(-q^{12},-q^{20}\right)&+q^2f\left(q,q^{7}\right)f\left(-q^3,-q^{5}\right)f\left(-q^{4},-q^{28}\right)\\
&=\frac{f_1 f_8^3 f_{16}^2}{f_2^2 f_{32}}+4 q^3\frac{f_1 f_4^2 f_{32}^3}{f_2^2 f_8},
\end{align*}
which, with the aid of \eqref{STf}, can be seen to be equivalent to \eqref{ModularIdentityST2}. Thus, we complete the proof.
\end{proof}

We end this section by  presenting some identities connecting $X(q)$ and $Y(q)$ defined in \eqref{Xq} and \eqref{Yq}.
\begin{lemma}\label{NewlemmaXY}
 We have
\begin{align}
\label{BBXY1}X\left(q\right)Y\left(q^3\right)+q^2 X\left(q^3\right)Y\left(q\right)&= \frac{f_4 f_6^5 f_9 f_{36}}{f_2^2 f_3^3 f_{12}^2 f_{18}},\\
\label{RobinsXY1} X\left(q\right)Y\left(q^3\right)-q^2 X\left(q^3\right)Y\left(q\right)&= \frac{f_{18}^2}{f_3 f_9},\\
\label{newXY1}X\left(q\right) Y\left(q^2\right)-q X\left(q^2\right) Y\left(q\right)&= \frac{f_1 f_6 f_{24}}{f_2^2 f_3},\\
\label{newXY2}Y\left(q^2\right) Y\left(q\right)-q^3 X\left(q^2\right) X\left(q\right)&=\frac{f_2^3 f_8 f_{12}^5}{f_1^2 f_4^3 f_6^2 f_{24}^2}- q\frac{f_1 f_6 f_{24}}{f_2^2 f_3}.
\end{align}
\end{lemma}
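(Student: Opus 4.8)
The plan is to reduce every identity, via the product representations $X(q)=f(-q,-q^{11})/f_1$ and $Y(q)=f(-q^5,-q^7)/f_1$ from \eqref{Xq} and \eqref{Yq}, to an identity among products of theta functions, and then to deploy the machinery of Lemmas \ref{NewLemmaGH} and \ref{LemmaSTModular}. First I would record the clean evaluation $X(q)Y(q)=f_6f_{12}^2/(f_1f_2f_3)$, which follows from \eqref{jtpi} once one notes $(q,q^5,q^7,q^{11};q^{12})_\infty=f_1f_6/(f_2f_3)$. Since \eqref{RobinsXY1} is the known identity of Robins, it may be quoted, leaving \eqref{BBXY1}, \eqref{newXY1}, and \eqref{newXY2} to establish.

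For the degree-two pair the anchor is the diagonal identity $Y^2(q)-q^2X^2(q)=f_2f_6^5/(f_1^2f_3^2f_{12}^2)$. To prove it I would set $(a,b)=(q,q^{11})$ and $(a,b)=(q^5,q^7)$ in \eqref{ThetaIdentity1}, obtaining $f^2(-q,-q^{11})=f(q^2,q^{22})\varphi(q^{12})-2qf(q^{10},q^{14})\psi(q^{24})$ and $f^2(-q^5,-q^7)=f(q^{10},q^{14})\varphi(q^{12})-2q^5f(q^2,q^{22})\psi(q^{24})$. Forming $f^2(-q^5,-q^7)-q^2f^2(-q,-q^{11})$ makes the result factor as $\left(f(q^{10},q^{14})-q^2f(q^2,q^{22})\right)\left(\varphi(q^{12})+2q^3\psi(q^{24})\right)$; here $\varphi(q^{12})+2q^3\psi(q^{24})=\varphi(q^3)$ by \eqref{phi-2} with $q\mapsto q^3$, while $f(q^{10},q^{14})-q^2f(q^2,q^{22})=f(-q^2,-q^4)=f_2$ by \eqref{ThetaIdentity0}, and the product form of $\varphi(q^3)$ finishes the computation. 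The genuinely new identity \eqref{newXY1} is the main obstacle: its cleared form $f(-q,-q^{11})f(-q^{10},-q^{14})-qf(-q^2,-q^{22})f(-q^5,-q^7)$ couples a modulus-$12$ theta with a modulus-$24$ theta, so \eqref{ThetaIdentity2} and \eqref{ThetaIdentity3} cannot be applied to it directly. I would follow the route of Lemma \ref{LemmaSTModular}, combining the two \eqref{ThetaIdentity1} expansions above with auxiliary product identities produced by \eqref{ThetaIdentity2} and \eqref{ThetaIdentity3}, then assembling the correct linear combination and simplifying with \eqref{phi-2}, \eqref{psi-2}, and the product representations of $\varphi$ and $\psi$.

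Once \eqref{newXY1} and the anchor are in hand, \eqref{newXY2} comes for free by an algebraic manoeuvre. Writing $u=Y(q)$, $v=qX(q)$, $s=Y(q^2)$, $t=q^2X(q^2)$, identity \eqref{newXY1} reads $vs-ut=qA$ with $A=f_1f_6f_{24}/(f_2^2f_3)$, the anchor and its image under $q\mapsto q^2$ give $u^2-v^2$ and $s^2-t^2$, and \eqref{newXY2} is $us-vt=B-qA$ with $B=f_2^3f_8f_{12}^5/(f_1^2f_4^3f_6^2f_{24}^2)$. The elementary relation $(us-vt)^2-(vs-ut)^2=(u^2-v^2)(s^2-t^2)$ then turns \eqref{newXY2} into the single eta-quotient identity $(B-qA)^2=(qA)^2+(u^2-v^2)(s^2-t^2)$, checkable by a $2$-dissection (or by matching enough coefficients), the positive square root being forced by the constant term $1$. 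Identity \eqref{BBXY1} is handled in the same spirit: with $P=X(q)Y(q^3)$ and $Q=q^2X(q^3)Y(q)$, Robins' \eqref{RobinsXY1} supplies $P-Q=f_{18}^2/(f_3f_9)$, while $PQ=q^2\,X(q)Y(q)\,X(q^3)Y(q^3)$ is a clean eta-quotient by the evaluation of $X(q)Y(q)$ above; then $(P+Q)^2=(P-Q)^2+4PQ$ reduces \eqref{BBXY1} to one more eta-quotient identity, again with the positive root. Thus the only real work is the theta assembly for \eqref{newXY1} and the two routine but lengthy eta-quotient verifications.
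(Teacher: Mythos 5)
Your reduction scheme is sound where it is explicit: the anchor $Y^2(q)-q^2X^2(q)=f_2f_6^5/(f_1^2f_3^2f_{12}^2)$ is correctly obtained from \eqref{ThetaIdentity1} together with $\varphi(q^{12})+2q^3\psi(q^{24})=\varphi(q^3)$ and $f(q^{10},q^{14})-q^2f(q^2,q^{22})=f(-q^2,-q^4)=f_2$, and the algebraic identities $(us-vt)^2-(vs-ut)^2=(u^2-v^2)(s^2-t^2)$ and $(P+Q)^2=(P-Q)^2+4PQ$, with the sign of the square root fixed by the constant term $1$, do legitimately reduce \eqref{newXY2} and \eqref{BBXY1} to \eqref{newXY1} plus eta-quotient identities. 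But the proposal has a genuine gap at exactly the point you yourself call the main obstacle: \eqref{newXY1} is never proved. You assert that \eqref{ThetaIdentity2} and \eqref{ThetaIdentity3} ``cannot be applied directly'' and gesture at an unspecified assembly in the style of Lemma \ref{LemmaSTModular}; in fact the direct application is precisely what works. Taking $(c,d)=(-a,-b)$ in \eqref{ThetaIdentity2} gives $f(q^5,q^7)f(-q^5,-q^7)=f(-q^{10},-q^{14})f(-q^{12},-q^{12})$ and $f(q,q^{11})f(-q,-q^{11})=f(-q^2,-q^{22})f(-q^{12},-q^{12})$, so the cleared form of \eqref{newXY1} factors as
$\dfrac{f(-q,-q^{11})f(-q^{5},-q^{7})}{f(-q^{12},-q^{12})}\bigl(f(q^{5},q^{7})-qf(q,q^{11})\bigr)$,
and \eqref{ThetaIdentity0} with $a=-q$, $b=-q^2$ evaluates the bracket to $f_1$. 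Without this (or an equivalent) argument, your whole chain has no base case.

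A secondary weakness is that the two ``routine but lengthy eta-quotient verifications'' you defer are themselves nontrivial theta identities --- your route to \eqref{newXY2}, for instance, requires in effect $B-2qA=f_4^4f_6^5/(f_2^4f_3^2f_8f_{12}^2)$ --- and ``matching enough coefficients'' is not a proof without a Sturm-type bound. The paper sidesteps both issues: it simply quotes \eqref{BBXY1} and \eqref{RobinsXY1} from Baruah--Bora and Robins, and derives \eqref{newXY2} from \eqref{newXY1} by writing $f(-q^5,-q^{7})f(-q^{10},-q^{14})-q^3f(-q,-q^{11})f(-q^2,-q^{22})$ as $\bigl(f(-q^5,-q^{7})+qf(-q,-q^{11})\bigr)\bigl(f(-q^{10},-q^{14})-q^2f(-q^2,-q^{22})\bigr)$ minus $q$ times the \eqref{newXY1} combination, each factor being evaluated by \eqref{ThetaIdentity0}; no additional eta-quotient identity is needed.
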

\begin{proof} The first two identities are due to Baruah and Bora \cite{BB1} and Robins \cite{R1}, so we prove  \eqref{newXY1} and \eqref{newXY2} only, which seem to be new.
Setting $a=q^5,b= q^7,c=-q^5,$ and $d=-q^7$ and $a=q,b= q^{11},c=-q,$ and $d=-q^{11}$, in turn, in \eqref{ThetaIdentity2}, we find that
\begin{align*}
f(q^5, q^7)f(-q^5,-q^7)&=f(-q^{10}, -q^{14})f(-q^{12}, -q^{12}),\\
f(q, q^{11})f(-q,-q^{11})&=f(-q^{2}, -q^{22})f(-q^{12}, -q^{12}).
\end{align*}
Therefore,
\begin{align}
\label{Modular12Key1}&f\left(-q,-q^{11}\right)f\left(-q^{10},-q^{14}\right)-qf\left(-q^5,-q^{7}\right)f\left(-q^2,-q^{22}\right)\notag\\
&=\dfrac{f\left(-q,-q^{11}\right)f\left(-q^{5},-q^{7}\right)}{f(-q^{12}, -q^{12})}\left(f\left(q^{5},q^{7}\right)-q f\left(q,q^{11}\right)\right).
\end{align}
As $a=-q$ and $b=-q^2$ in \eqref{ThetaIdentity0} gives
\begin{align}\label{Modular12Key11}
f\left(q^{5},q^{7}\right)-q f\left(q,q^{11}\right)=f\left(-q,-q^{2}\right)=f_1,
\end{align}
it follows from \eqref{Modular12Key1} that
\begin{align*}
f\left(-q,-q^{11}\right)f\left(-q^{10},-q^{14}\right)&-qf\left(-q^5,-q^{7}\right)f\left(-q^2,-q^{22}\right)\\
&=f_1
\dfrac{f\left(-q,-q^{11}\right)f\left(-q^{5},-q^{7}\right)}{f(-q^{12}, -q^{12})}.
\end{align*}
Employing \eqref{Xq}, \eqref{Yq}, and \eqref{jtpi} in the above identity, and then simplifying the $q$-products, we arrive at \eqref{newXY1}.

We now prove \eqref{newXY2}. We have
\begin{align}\label{xy12}
&f(-q^5,-q^{7})f(-q^{10},-q^{14})-q^3f(-q,-q^{11})f(-q^2,-q^{22})\notag\\
&=\left(f(-q^5,-q^{7})+qf(-q,-q^{11})\right)\cdot \left(f(-q^{10},-q^{14})-q^2f(-q^2,-q^{22})\right)\notag\\
&\quad-q\left(f(-q,-q^{11})f(-q^{10},-q^{14})-qf(-q^5,-q^{7})f(-q^2,-q^{22})\right).
\end{align}

Setting $a=-q^2$ and $b=q^4$ in \eqref{ThetaIdentity0}, we have
\begin{align}\label{Modular12Key121}
f(-q^2,q^4)=f(-q^{10},-q^{14})-q^2 f(-q^2,-q^{22}).
\end{align}
Employing \eqref{Xq}, \eqref{Yq}, \eqref{jtpi}, \eqref{newXY1}, \eqref{Modular12Key11} with $q$ replaced by $-q$,     and \eqref{Modular12Key121} in \eqref{xy12}, and then simplifying the $q$-products, we obtain \eqref{newXY2} to complete the proof.
\end{proof}

\section{Proofs of Theorems \ref{Theo_1} and \ref{ExtraTheo1}}\label{sec3}
\textit{Proof of Theorem \ref{Theo_1}} We recall from Cui and Gu \cite[Theorem 2.2]{CG1} that for a prime $p>3$
\begin{align}
\label{pdissectf1} f_1&=(-1)^{\frac{\pm p-1}{6}}q^{\frac{p^2-1}{24}}f_{p^2}+\sum_{\substack{k=-\frac{p-1}{2}\\ k\ne \frac{\pm p-1}{6}}}^{\frac{p-1}{2}}(-1)^k q^{\frac{3k^2+k}{2}}f\left(-q^{\frac{3p^2+(6k+1)p}{2}},-q^{\frac{3p^2-(6k+1)p}{2}}\right),
\end{align}
where
\begin{align*}
\dfrac{\pm p-1}{6}:=\left\{
                      \begin{array}{ll}
                        \frac{p-1}{6}, & \hbox{$p\equiv1~(\text{mod}~6)$,} \\
                      \frac{-p-1}{6}, & \hbox{$p\equiv-1~(\text{mod}~6)$.}
                      \end{array}
                    \right.
\end{align*}

Furthermore, for $-(p-1)/2\le k \le (p-1)/2$ and $k\ne (\pm p-1)/6$,
\begin{align}\label{condition1}
\dfrac{3k^2+k}{2}\not\equiv \dfrac{p^2-1}{24}~(\text{mod}~p).
\end{align}

Dividing both sides of \eqref{pdissectf1} by  $f_p$, we find that
\begin{align*}
 \left(q,q^2,\ldots,q^{p-1};q^{p}\right)_\infty&=(-1)^{\frac{\pm p-1}{6}}q^{\frac{p^2-1}{24}}\left(q^p,q^{2p},\ldots,q^{p(p-1)};q^{p^2}\right)_\infty^{-1}\notag\\
&\quad+\dfrac{1}{f_p}\sum_{\substack{k=-\frac{p-1}{2}\\ k\ne \frac{\pm p-1}{6}}}^{\frac{p-1}{2}}(-1)^k q^{\frac{3k^2+k}{2}}f\left(-q^{\frac{3p^2+(6k+1)p}{2}},-q^{\frac{3p^2-(6k+1)p}{2}}\right),
\end{align*}
Applying $U_{pn+(p^2-1)/24}$ on both sides of the above identity and then noting the condition \eqref{condition1}, we arrive at
\begin{align*}
U_{pn+(p^2-1)/24}\left(\sum_{n=0}^{\infty}\gamma_1(n)q^n\right)&=(-1)^{\frac{\pm p-1}{6}}\left(q,q^2,\ldots,q^{p-1};q^{p}\right)_\infty^{-1}\\
&=(-1)^{\frac{\pm p-1}{6}}\sum_{n=0}^{\infty}\gamma_1^{\prime}(n)q^n,
\end{align*}
from which (i) follows.

Next, we prove (iii). From Cui and Gu \cite[Theorem 2.1]{CG1}, we recall the $p$-dissection of $\psi(q)$ for any odd prime $p$ as
\begin{align}
\label{pdissectPsi}
\psi(q)=q^{\frac{p^2-1}{8}}\psi\left(q^{p^2}\right)+\sum_{k=0}^{\frac{p-3}{2}}q^{\frac{k^2+k}{2}}f\left(q^{\frac{p^2+(2k+1)p}{2}},q^{\frac{p^2-(2k+1)p}{2}}\right).
\end{align}
Furthermore, for $0\le k\le (p-3)/2$,
\begin{align}
\label{condition3}\dfrac{k^2+k}{2}\not\equiv  \dfrac{p^2-1}{8}~(\text{mod}~p).
\end{align}

Dividing both sides of \eqref{pdissectPsi} by  $\psi(q^p)={f_{2p}^2}/{f_{p}}$, we find that
\begin{align*}
\dfrac{\left(q^2,q^4,\ldots,q^{2(p-1)};q^{2p}\right)_{\infty}^{2}}{\left(q,q^2,\ldots,q^{p-1};q^{p}\right)_{\infty}}
&=q^{\frac{p^2-1}{8}}\dfrac{\left(q^p,q^{2p},\ldots,q^{p(p-1)};q^{p^2}\right)_{\infty}}{\left(q^{2p},q^{4p},\ldots,q^{2p(p-1)};q^{2p^2}\right)_{\infty}^{2}}\\
&\quad+\dfrac{f_{p}}{f_{2p}^2}\sum_{k=0}^{\frac{p-3}{2}}q^{\frac{k^2+k}{2}}f\left(q^{\frac{p^2+(2k+1)p}{2}},q^{\frac{p^2-(2k+1)p}{2}}\right).
\end{align*}
Using $U_{pn+(p^2-1)/8}$ on both sides of the identity above and then considering the condition \eqref{condition3}, we find that
\begin{align*}
U_{pn+(p^2-1)/8}\left(\sum_{n=0}^{\infty}\gamma_3(n)q^n\right)=\dfrac{\left(q,q^2,\ldots,q^{p-1};q^{p}\right)_{\infty}}{\left(q^2,q^4,\ldots,q^{2(p-1)};q^{2p}\right)_{\infty}^{2}}
=\sum_{n=0}^{\infty}\gamma_3^{\prime}(n)q^n,
\end{align*}
from which, (iii) follows readily.

Identity (ii) can be proved in a similar fashion by applying the $p$-dissection of $\varphi(q)$ for a prime $p$ from \cite[Eq. (3.32)]{BK1}, namely,
\begin{align*}
 \varphi(q)&=\varphi\left(q^{p^2}\right)+\sum_{k=1}^{p-1}q^{k^2} f\left(q^{p(p-2k)},q^{p(p+2k)}\right).
\end{align*}
Note that though the above identity is stated  for a prime $p\equiv2~(\text{mod}~3)$  in \cite{BK1} but it also holds for $p=3$ and any prime $p\equiv1~(\text{mod}~3)$.

\vspace*{2mm}
\noindent \textit{Proof of Theorem \ref{ExtraTheo1}}
From Lemma \ref{Lemma-3-dissection}, we recall the 3-dissections of $f_1^3$ and $f_1f_2$, namely,
\begin{align*}
	 f_1^3&=\dfrac{f_6f_9^6}{f_3f_{18}^3}-3qf_9^3+4q^3 \dfrac{f_3^2f_{18}^6}{f_6^2f_9^3},\\
    f_1f_2&=\frac{f_{6} f_{9}^4}{f_{3} f_{18}^2}-qf_{9} f_{18}-2 q^{2}\frac{f_{3} f_{18}^4}{f_{6}f_{9}^2}.
  \end{align*}
Therefore,
\begin{align}
	\label{3f_1a} \dfrac{f_1^3}{f_3^3}&=\dfrac{f_6f_9^6}{f_3^4f_{18}^3}-3q\dfrac{f_9^3}{f_3^3}+4q^3 \dfrac{f_{18}^6}{f_3f_6^2f_9^3},\\
    \label{3f_1f_2a} \dfrac{f_1f_2}{f_3f_6}&=\frac{f_{9}^4}{f_3^2 f_{18}^2}-q\dfrac{f_{9} f_{18}}{f_3f_6}-2 q^{2}\frac{f_{18}^4}{f_6^2f_{9}^2}.
  \end{align}
Applying the operator $U_{3n+1}$ in \eqref{3f_1a} and \eqref{3f_1f_2a}, we readily arrive at (i) and (xi), respectively.

On the other hand, squaring both \eqref{3f_1a} and \eqref{3f_1f_2a}, and then applying the operator  $U_{3n+2}$, we obtain  (ii) and (xii). Similarly, taking the fifth power on both sides of \eqref{3f_1f_2a}, we can easily arrive at (xiii).

The remaining identities of Theorem \ref{ExtraTheo1} can also be proved easily in a similar fashion. For example, to prove (xx), we use $p=11$ in \eqref{pdissectf1} to obtain the $11$-dissections of $f_1$ and $f_4$. We use those to arrive at
\begin{align*}
U_{7n+3}\left(\dfrac{f_1f_4}{f_{11}f_{44}}\right)=q\dfrac{f_{11}f_{44}}{f_1f_4},
\end{align*}
which readily gives (xx). \qed

\section{Proof of Theorem \ref{ExtraTheorem1}} \label{sec4}

The results in  Theorem \ref{ExtraTheorem1} follow from modular equations. For example, a degree 5 modular equation recorded by Ramanujan in his notebooks \cite{nb} and  proved by Berndt \cite[Entry 13(ii), p. 280]{B1}, namely,
\begin{align*}
\left(\dfrac{\alpha^5}{\beta}\right)^{1/8}-\left(\dfrac{(1-\alpha)^5}{1-\beta}\right)^{1/8}=1+2^{1/3}\left(\dfrac{\alpha^5(1-\alpha)^5}{\beta(1-\beta)}\right)^{1/24},
\end{align*}
where $\beta$ has degree 5 over $\alpha$, can be transcribed into the $q$-product identity (See Baruah and Berndt \cite[Eq. (7.4)]{BBerndt1})
\begin{align}
\label{ana1}\dfrac{\left(q;q^2\right)_{\infty}^5}{\left(q^5;q^{10}\right)_{\infty}}+\dfrac{\left(-q;q^2\right)_{\infty}^5}{\left(-q^5;q^{10}\right)_{\infty}}+2
=4\dfrac{\left(q^{10};q^{20}\right)_{\infty}}{\left(q^2;q^{4}\right)_{\infty}^5}.
\end{align}
Equivalently, we have
\begin{align*}
\sum_{n=0}^{\infty}\nu_{1,5}(n)q^n+\sum_{n=0}^{\infty}\nu_{1,5}(n)(-q)^n+2=4\sum_{n=0}^{\infty}\nu_{1,5}^{\prime}(n)q^{2n}.
\end{align*}
Equating the coefficients of $q^{2n+2}$ from both sides, we immediately arrive at (v).

The remaining identities of the theorem can be proved in a similar fashion. In the following list, we  cite only the sources of the used $q$-product identities.
\begin{align*}
\begin{tabular}{c c}
 \hline
  Results in Theorem \ref{ExtraTheorem1}  & Used identities analogous to \eqref{ana1} \T\B\\  \hline
    (i)--(iv), (vi) & \cite[(4.3), (6.5), (7.5)]{BBerndt1}\T\B\\
(vii), (viii) & \cite[(4.8), (4.12)]{BBoruah1}, \cite[(8.15)]{BBS} \T\B\\
 (ix)--(xiv) & \cite[(3.4), (4.4), (5.4), (6.4), (7.4))]{B2} \T\B\\
  (xv) & \cite[(8.4)]{BBerndt1} \T\B\\
  (xvi), (xvii) & \cite[(3.21)]{BBoruah2} (Also \eqref{5df_1}) \T\B\\\hline
\end{tabular}
\end{align*}
\qed

\section{Proof of Theorem \ref{Theo5_1}}\label{sec5}

\noindent\textit{Proofs of (i) and (ii)} Set
\begin{align*}
\Lambda_1:=\sum_{n=0}^{\infty}\lambda_{1}(n)q^n+q^2\sum_{n=0}^{\infty}\lambda^{\prime}_{1}(n)q^n=\dfrac{1}{R^5\left(q\right)}+q^2 R^5\left(q\right).
\end{align*}
Employing \eqref{R5q5}--\eqref{R5q5D} and \eqref{GHf5}, we find that
\begin{align*}
\Lambda_1&=\dfrac{f_1f_{25}^5}{R\left(q^5\right)f_5^6}\Big(1+6 q R\left(q^5\right)+18 q^2 R^2\left(q^5\right)+24 q^3 R^3\left(q^5\right)+42 q^4 R^4\left(q^5\right)\notag\\
&\quad+42 q^6 R^6\left(q^5\right)-24 q^7 R^7\left(q^5\right)+18 q^8 R^8\left(q^5\right)-6 q^9 R^9\left(q^5\right)+q^{10} R^{10}\left(q^5\right)\Big),
\end{align*}
which by  \eqref{5df_1} becomes
\begin{align*}
\Lambda_1&=\dfrac{f_{25}^6}{f_5^6}\bigg(\frac{1}{R^6\left(q^5\right)}+\frac{5q}{R^5(q^5)}+\frac{11q^2}{R^4(q^5)}-\frac{66q^5}{R(q^5)}-66 q^7 R(q^5)\notag\\
&\quad-11 q^{10} R^4(q^5)+5 q^{11} R^5(q^5)-q^{12} R^6(q^5)\bigg).
\end{align*}
Now it is easy to see that $U_{5n+r}(\Lambda_1)=0$ for $r\in\{3,4\}$, which is equivalent to (i). The proof of (ii) can similarly be accomplished.

\noindent\textit{Proof of (iii)} Set
\begin{align*}
\Lambda_3:=\sum_{n=0}^{\infty}\lambda_{3}(n)q^n+\sum_{n=0}^{\infty}\lambda^{\prime}_{3}(n)q^n=\dfrac{R^2(q)}{R(q^2)}+\dfrac{R(q^2)}{R^2(q)}.
\end{align*}

From \eqref{RobinsKey1} and \eqref{RobinsKey2}, we have
\begin{align*}
\dfrac{G^2(q)H(q^2)}{G(q^2)H(q)^2}-1=2q\dfrac{H^2(q^2)}{G(q^2)H(q)}\dfrac{f_{10}^2}{f_5^2}, \quad \dfrac{G(q^2)H^2(q)}{G^2(q)H(q^2)}+1=2\dfrac{G^2(q^2)}{G(q)H(q^2)}\dfrac{f_{10}^2}{f_5^2}.
\end{align*}
Adding the above two identities and then using \eqref{R-GH} and  \eqref{GHf5}, we obtain
\begin{align*}
\Lambda_3&=2\frac{f_1f_2 f_{10}}{f_5^3}\left(G^3(q^2)H(q)+qG(q)H^3(q^2)\right),
\end{align*}
which by \eqref{newGH1} becomes
\begin{align}
\label{Ident4}\Lambda_3&=2\frac{f_2^2 f_{10}^{10}}{f_4 f_5^8 f_{20}^3}+8 q^2\frac{f_1 f_4 f_{10} f_{20}^3}{f_2 f_5^5}=2\frac{f_{10}^{10}}{f_5^8 f_{20}^3}\varphi(-q^2)+8 q^2\frac{f_{10} f_{20}^3}{f_5^5}\psi(-q).
\end{align}
Employing \eqref{5dissectPhi} and \eqref{5dissectPsi}, we rewrite \eqref{Ident4} as
\begin{align*}
\Lambda_3&=2\frac{f_{10}^{10}}{f_5^8 f_{20}^3}\left(\varphi(-q^{50})-2q^2f(-q^{30},-q^{70})+2q^8f(-q^{10},-q^{90})\right)\notag\\
&\quad+8 q^2\frac{f_{10}f_{20}^3}{f_5^5}\left(f(q^{10},-q^{15})-qf(-q^{5},q^{20})-q^3\psi(-q^{25})\right).
\end{align*}
The above identity gives $U_{5n+r}(\Lambda_3)=0$ for $r\in\{1,4\}$, which is equivalent to (iii).

\noindent\textit{Proof of (iv)} We set
\begin{align*}
\Lambda_4:=\sum_{n=0}^{\infty}\lambda_{4}(n)q^n+q^2\sum_{n=0}^{\infty}\lambda_{4}^{\prime}(n)q^n
&=\dfrac{1}{R(q)R^2(q^2)}+q^2R(q)R^2(q^2).
\end{align*}

Adding \eqref{RobinsKey1} and \eqref{RobinsKey2} and then rearranging, we have
\begin{align}
\label{RobinsKey3} 1+q\dfrac{H\left(q\right)H^2\left(q^2\right)}{G\left(q\right)G^2\left(q^2\right)}&=\dfrac{G\left(q\right)H\left(q^2\right)}{G^2\left(q^2\right)}\dfrac{f_{5}^2}{f_{10}^2}.
\end{align}
Subtracting \eqref{RobinsKey1} from \eqref{RobinsKey2}, and then rearranging, we have
\begin{align}
\label{RobinsKey4}\dfrac{G\left(q\right)G^2\left(q^2\right)}{H\left(q\right)H^2\left(q^2\right)}-q&=\dfrac{G\left(q^2\right)H\left(q\right)}{H^2\left(q^2\right)}\dfrac{f_{5}^2}{f_{10}^2}.
\end{align}
With the aid of \eqref{RobinsKey3}, \eqref{RobinsKey4}, \eqref{R-GH}, and \eqref{GHf5}, we obtain
\begin{align*}
\Lambda_4&=\dfrac{f_5^2f_2^2}{f_{10}^4}\left(G^3\left(q^2\right)H\left(q\right)+qG\left(q\right)H^3\left(q^2\right)\right).
\end{align*}
Employing \eqref{newGH1} and \eqref{fq}, we find that
\begin{align*}
\Lambda_4&=\frac{f_2^3 f_{10}^5}{f_1 f_4 f_5^3 f_{20}^3}+4 q^2\frac{f_4 f_{20}^3}{f_{10}^4}=f(q)\frac{f_{10}^5}{f_5^3 f_{20}^3}+4 q^2f_4 \frac{f_{20}^3}{f_{10}^4}.
\end{align*}
Employing \eqref{5df_1} in the above identity, we have
\begin{align*}
\Lambda_4&=\dfrac{f_{10}^5f(q^{25})}{f_5^3 f_{20}^3}\left(\dfrac{1}{R\left(-q^5\right)}+q-q^2R\left(-q^5\right)\right)\\
&\quad+4q^2\dfrac{f_{20}^3f_{100}}{f_{10}^4}\left(\dfrac{1}{R\left(q^{20}\right)}-q^4-q^8R\left(q^{20}\right)\right),
\end{align*}
from which we arrive at $U_{5n+r}\left(\Lambda_4\right)=0$ for $r\in\{3,4\}$. This finishes the proof of (iv).

\noindent\textit{Proof of (v)}
Set
\begin{align*}
\Lambda_5&:=\sum_{n=0}^{\infty}\lambda_{5}(n)q^n-q^2\sum_{n=0}^{\infty}\lambda_{5}^{\prime}(n)q^n=\dfrac{1}{R(q)R(q^4)}-q^2R(q)R(q^4).
\end{align*}

Now, recall from Gugg \cite[Theorem 3.3 (i) and (ii)]{G2} that
\begin{align}
\label{GuggMu1}\dfrac{1+qR\left(q\right)R\left(q^{4}\right)}{1-qR\left(q\right)R\left(q^{4}\right)}&=\dfrac{\varphi(q)}{\varphi\left(q^5\right)}=\dfrac{f_2^5f_5^2f_{20}^2}{f_1^2f_4^2f_{10}^5},\\
\label{GuggMu2}\dfrac{R\left(q\right)R\left(q^{4}\right)}{(1-qR\left(q\right)R\left(q^{4}\right))^2}&=\dfrac{(-q;q^2)_\infty}{(-q^5;q^{10})_\infty^5}
=\dfrac{f_2^2f_5^5f_{20}^5}{f_1f_4f_{10}^{10}}.
\end{align}
Multiplying the numerator and the denominator of the left side of \eqref{GuggMu1} by the denominator, and then using \eqref{GuggMu2} and \eqref{5df_1}, we find that
\begin{align}\label{RqRq4}
\Lambda_5=\dfrac{f_2^3f_{10}^5}{f_1f_4f_5^3f_{20}^3}=\dfrac{f_{10}^5}{f_5^3f_{20}^3}f(q)=\dfrac{f_{10}^5f(q^{25})}{f_5^3f_{20}^3}\left(\dfrac{1}{R(-q^5)}+q-q^2R(-q^5)\right).
\end{align}
It follows  from \eqref{RqRq4} that $U_{5n+r}(\Lambda_5)=0$ for $r\in\{3,4\}$, which is equivalent to (v).

\noindent\textit{Proof of (vi)} We set
\begin{align*}
\Lambda_6&:=\sum_{n=0}^{\infty}\lambda_{6}(n)q^n-q^2\sum_{n=0}^{\infty}\lambda_{6}^{\prime}(n)q^n=\dfrac{1}{R(-q)R(-q^4)}-q^2R(-q)R(-q^4).
\end{align*}

Employing \eqref{R-GH}, \eqref{Rama2}, \eqref{newGH3}, and \eqref{GHf5}, we find that
\begin{align*}
\Lambda_6&=\dfrac{G\left(-q\right)G\left(-q^4\right)}{H\left(-q\right)H\left(-q^4\right)}-q^2\dfrac{H\left(-q\right)H\left(-q^4\right)}{G\left(-q\right)G\left(-q^4\right)}\\
&=\dfrac{\left(G(-q)G(-q^4)\right)^2-q^2\left(H(-q)H(-q^4)\right)^2}{G(-q)H(-q)G(-q^4)H(-q^4)}\\
&=\left(\frac{f_2f_8^2 f_{20}^4 f_{80}}{f_{10}^3 f_{16} f_{40}^4}-2q\frac{f_2 f_4 f_{20}}{f_{10}^3}\right)\cdot\dfrac{f_5}{f_1}.
\end{align*}
Employing the 2-dissection of $f_5/f_1$ from Lemma \ref{Lemma-2-dissection} and then applying $U_{2n+1}$, we obtain
\begin{align}
\label{E_6Trick1}U_{2n+1}\left(\Lambda_6\right)=\frac{f_2^3 f_4 f_{10}^3 f_{40}}{f_1^2 f_5^2 f_8 f_{20}^3}-2\frac{f_2 f_4 f_{10}^3}{f_1 f_5^3 f_{20}}=\frac{f_2 f_4 f_{10}^3 f_{40}}{f_1 f_5^3 f_8 f_{20}^3}\left(\frac{f_2^2 f_5}{f_1}-2\frac{f_8 f_{20}^2}{f_{40}}\right).
\end{align}

Now, putting $a=q,b=q^9,c=q^3,$ and $d=q^7$ in \eqref{ThetaIdentity2}, we have
\begin{align}
\label{E_6Trick2}f\left(q,q^9\right)f\left(q^3,q^7\right)+f\left(-q,-q^9\right)f\left(-q^3,-q^7\right)=2f\left(q^4,q^{16}\right)f\left(q^8,q^{12}\right).
\end{align}

Using Jacobi triple product identity \eqref{jtpi}, we have
\begin{align}\label{E_6Trick3}
 f\left(-q,-q^9\right)f\left(-q^3,-q^7\right)&=\dfrac{f_1f_{10}^3}{f_2f_5}, &
f\left(q,q^4\right)f\left(q^2,q^3\right)&=\dfrac{f_2f_5^3}{f_1f_{10}}.
\end{align}
Therefore, \eqref{E_6Trick2} can be rewritten in the form
\begin{align}
\label{Sameway1}\frac{f_2^2 f_5}{f_1}-2\frac{f_8 f_{20}^2}{f_{40}}=-\dfrac{f_1f_4f_{20}^3}{f_2f_5f_{20}}.
\end{align}
Employing \eqref{5dissectPhi} and  \eqref{Sameway1}, \eqref{E_6Trick1} can be written as
\begin{align*}
U_{2n+1}\left(\Lambda_6\right)&=-\dfrac{f_4^2f_{10}^6f_{40}}{f_8f_5^4f_{20}^4}=-\dfrac{f_{10}^6f_{40}}{f_5^4f_{20}^4}\varphi\left(-q^4\right)\\
&=-\dfrac{f_{10}^6f_{40}}{f_5^4f_{20}^4}\left(\varphi(-q^{100})-2q^4f(-q^{60},-q^{140})+2q^{16}f(-q^{20},-q^{180})\right).
\end{align*}
The above identity gives $U_{10n+r}(\Lambda_6)=0$ for $r\in\{5,7\}$, which is equivalent to (vi).

\noindent\textit{Proofs of (vii) and (viii)}
Let
\begin{align*}
\Lambda_7&:=\sum_{n=0}^{\infty}\lambda_{7}(n)q^n+\sum_{n=0}^{\infty}\lambda_{7}^{\prime}(n)q^n=\dfrac{R^4(q)}{R\left(q^{4}\right)}+\dfrac{R\left(q^{4}\right)}{R^4(q)},\\
\Lambda_8&:=\sum_{n=0}^{\infty}\lambda_{8}(n)q^n+q^4 \sum_{n=0}^{\infty}\lambda_{8}^{\prime}(n)q^n=\dfrac{R^2(q)}{R^3\left(q^{4}\right)}+q^4\dfrac{R^3\left(q^{4}\right)}{R^2(q)}.
\end{align*}
To prove (vii) and (viii), we show that $U_{10n+r}(\Lambda_7)=0$ for $r\in\{1,9\}$ and $U_{10n+r}(\Lambda_8)=0$ for $r\in\{5,9\}$.
We note that
\begin{align}\label{Lam7}
\Lambda_7&=\dfrac{R(q^4)}{R^2(q^2)}+\dfrac{R^2(q^2)}{R(q^4)}
-\left(\dfrac{R(q^2)}{R^2(q)}-\dfrac{R^2(q)}{R(q^2)}\right)\left(\dfrac{R^2(q)R(q^2)}{R(q^4)}-\dfrac{R(q^4)}
{R^2(q)R(q^2)}\right)\\
\label{Lam8}
\Lambda_8&=\left(\dfrac{R^2\left(q\right)R\left(q^2\right)}{R\left(q^4\right)}-\dfrac{R\left(q^4\right)}{R^2\left(q\right)R\left(q^2\right)}\right)\left(\dfrac{1}{R(q^2)R^2\left(q^4\right)}
-q^4R(q^2)R^2\left(q^4\right)\right)\notag\\
&\quad+\left(\dfrac{1}{R(q)R^2\left(q^2\right)}-q^2R(q)R^2\left(q^2\right)\right)\left(\dfrac{1}{R(q)R\left(q^4\right)}-q^2R(q)R\left(q^4\right)\right)\notag\\
&\quad+q^2\left(\dfrac{R^2\left(q\right)}{R\left(q^2\right)}+\dfrac{R\left(q^2\right)}{R^2\left(q\right)}\right).
\end{align}

From Baruah and Begum \cite[Lemma 1.3]{BBegum1}, we recall that
\begin{align}
\label{Ident3}\dfrac{1}{R(q)R^2(q^2)}-q^2R(q)R^2(q^2)&=\dfrac{f_2f_{5}^5}{f_1f_{10}^5},\\
\label{Ident8}\dfrac{R(q^2)}{R^2(q)}-\dfrac{R^2(q)}{R(q^2)}&=4q\dfrac{f_1f_{10}^5}{f_2f_{5}^5}.
\end{align}

Gugg \cite[Theorem 3.6 (i) and (ii)]{G2} proved that
\begin{align}\label{GuggMu11}
\dfrac{1+R^2(q)R(q^2)/R(q^4)}{1-R^2(q)R(q^2)/R(q^4)}&=\dfrac{\psi(q^2)}{q\psi(q^{10})}=\dfrac{f_4^2f_{10}}{qf_2f_{20}^2},\\
\label{GuggMu22}\dfrac{4R^2(q)R(q^2)/R(q^4)}{\left(1-R^2(q)R(q^2)/R(q^4)\right)^2}&=\dfrac{f_4f_{10}^5}{q^2f_2f_{20}^5}.
\end{align}
Multiplying the numerator and the denominator of the left side of \eqref{GuggMu11} by the denominator, and then using \eqref{GuggMu22}, we find that
\begin{align}
\label{Ident2}\dfrac{R^2(q)R(q^2)}{R(q^4)}-\dfrac{R(q^4)}{R^2(q)R(q^2)}&=-4q\dfrac{f_4f_{20}^3}{f_{10}^4}.
\end{align}

Employing \eqref{Ident4} with $q$ replaced by $q^2$, \eqref{Ident2}, and \eqref{Ident8} in \eqref{Lam7}, we obtain
\begin{align*}
\Lambda_7&=2\frac{f_4^2 f_{20}^{10}}{f_8 f_{10}^8 f_{40}^3}+16q^2\frac{f_4 f_{10}f_{20}^3 }{f_2}\cdot\dfrac{f_1}{f_5}\cdot\dfrac{1}{f_5^4}+8q^4\frac{ f_2f_8 f_{20}f_{40}^{3} }{f_4 f_{10}^5}.
\end{align*}
Using the 2-dissections of $f_1/f_5$ and $1/f_1^4$ with $q$ replaced by $q^5$ from Lemma \ref{Lemma-2-dissection} in the above identity, and then applying the operator $U_{2n+1}$, we obtain
\begin{align*}
U_{2n+1}\left(\Lambda_7\right)=
-16q\frac{f_{10}^{17} }{f_{5}^{15} f_{20}^3}f(q)+64q^3\frac{f_{10}^8 f_{20}^3 }{f_{5}^{12}}f_4.
\end{align*}
With the help of \eqref{5df_1}, the above identity becomes
\begin{align}
\label{NewJune8} U_{2n+1}\left(\Lambda_7\right)=&
-16q\frac{f_{10}^{17}f(q^{25}) }{f_{5}^{15} f_{20}^3}\left(\dfrac{1}{R(-q^5)}+q-q^2R(-q^5)\right)\notag\\
&+64q^3\dfrac{f_{10}^8 f_{20}^3 f_{100}}{f_{5}^{12}}f_{25}\left(\dfrac{1}{R(q^{20})}-q^4-q^8R(q^{20})\right).
\end{align}
Applying $U_{5n+r}$ for $r\in\{0,4\}$ in \eqref{NewJune8}, we have $U_{10n+r}(\Lambda_7)=0$ for $r\in\{1,9\}$.

Again, employing \eqref{Ident4}, \eqref{RqRq4}, \eqref{Ident2}, and \eqref{Ident3} with $q$ replaced by $q^2$ in \eqref{Lam8}, we find that
\begin{align*}
\Lambda_8&=\frac{f_2^4}{f_4 f_{20}^3}\cdot\dfrac{f_5^2}{f_1^2}-4q\frac{f_4^2 f_{10} }{f_2 f_{20}^2}+2q^2\frac{ f_4^2 f_{20}^{10} }{f_8 f_{10}^8 f_{40}^3}+8q^6\frac{ f_2 f_8 f_{20} f_{40}^3 }{f_4 f_{10}^5}.
\end{align*}
Using the 2-dissection of $f_5/f_1$ from Lemma \ref{Lemma-2-dissection} in the above identity and then applying $U_{2n+1}$ and \eqref{5dissectPsi}, we obtain
\begin{align*}
U_{2n+1}\left(\Lambda_8\right)&=-2\dfrac{f_2^2f_5}{f_1f_{10}}=-2\dfrac{f_5}{f_{10}}\psi(q)\notag\\
&=-2\dfrac{f_5}{f_{10}}\left(f(q^{10},q^{15})+qf(q^{5},q^{20})+q^3\psi(q^{25}\right),
\end{align*}
from which it follows that $U_{10n+r}(\Lambda_8)=0$ for $r\in\{5,9\}$. Thus, we complete the proofs of (vii) and (viii).

\noindent\textit{Proof of (ix)}
Set
\begin{align*}
\Lambda_9&:=\sum_{n=0}^{\infty}\lambda_{9}(n)q^n-q^6 \sum_{n=0}^{\infty}\lambda_{9}^{\prime}(n)q^n=\dfrac{R(q)}{R\left(q^{16}\right)}-q^6\dfrac{R\left(q^{16}\right)}{R(q)}.
\end{align*}
Due to \eqref{R-GH}, \eqref{GHf5}, \eqref{Rama3}, and \eqref{newGH2}, we find that
\begin{align*}
\Lambda_9=\dfrac{G\left(q^{16}\right)H\left(q\right)}{G\left(q\right)H\left(q^{16}\right)}-q^6 \dfrac{G\left(q\right)H\left(q^{16}\right)}{G\left(q^{16}\right)H\left(q\right)}
&=\dfrac{f_1f_{16}}{f_5f_{80}}\left(G^2(q^{16})H^2(q)-q^6G^2(q)H^2(q^{16})\right)\\
&=\left(\dfrac{f_4^2 f_{16} f_{20}^2}{f_2^2f_8 f_{40} f_{80}}+2q^3\dfrac{f_4^2 }{f_2^2}\right)\cdot \dfrac{f_1}{f_5}.
\end{align*}
Employing the 2-dissection of $f_1/f_5$ from Lemma \ref{Lemma-2-dissection} and then extracting the terms involving the odd powers of $q$, we find that
\begin{align}
\label{N5Trick1}U_{2n+1}\left(\Lambda_9\right)=-\dfrac{f_2^2f_8f_{10}^2}{f_1f_4f_5^3f_{20}f_{40}}\left(\dfrac{f_2^2f_5f_{20}}{f_1f_4}-2q\dfrac{f_4^2f_{10}f_{40}}{f_2f_8}\right).
\end{align}

Now, putting $a=q,b=q^9,c=q^3,$ and $d=q^7$ in \eqref{ThetaIdentity3}, we have
\begin{align*}
f(q,q^9)f(q^3,q^7)-f(-q,-q^9)f(-q^3,-q^7)=2q f(q^2,q^{18})f(q^6,q^{14}),
\end{align*}
which, with the help of \eqref{E_6Trick3} and \eqref{fq}, can be rewritten as
\begin{align}\label{ixnew}
\dfrac{f_2^2f_5f_{20}}{f_1f_4}-2q\dfrac{f_4^2f_{10}f_{40}}{f_2f_8}=\dfrac{f_1f_{10}^3}{f_2f_5}.
\end{align}
Employing \eqref{ixnew} in \eqref{N5Trick1} and then using \eqref{5dissectPsi}, we have
\begin{align*}
U_{2n+1}\left(A_9\right)&=-\dfrac{f_2f_8f_{10}^5}{f_4f_5^4f_{20}f_{40}}=-\dfrac{f_{10}^5}{f_5^4f_{20}f_{40}}\psi(-q^2)\\
&=-\dfrac{f_{10}^5}{f_5^4f_{20}f_{40}}\left(f(q^{20},-q^{30})-q^2f(-q^{10},q^{40})-q^6\psi(-q^{50})\right).
\end{align*}
The above identity ensures that $U_{10n+r}(\Lambda_9)=0$ for $r\in\{7,9\}$, which is equivalent to (ix).

\noindent\textit{Proofs of (x) and (xi)} From \eqref{GuggMu22}, we have
 \begin{align*}
\sum_{n=0}^\infty\lambda_{10}(n)q^n+\sum_{n=0}^\infty\lambda_{10}^\prime(n)q^n=\dfrac{R^2\left(q\right)R\left(q^2\right)}{R\left(q^4\right)}+\dfrac{R\left(q^4\right)}{R^2\left(q\right)R\left(q^2\right)}&=2+4q^2\dfrac{f_2f_{20}^5}{f_4f_{10}^5}.
\end{align*}
Due to \eqref{Ident2} and the above identity, it readily follows that
\begin{align*}
\lambda_{10}(2n+1)=-\lambda_{10}^\prime(2n+1),\quad
\lambda_{10}(2n)=\lambda_{10}^\prime(2n),
\end{align*}
which together imply (x).

Next, with the aid of \eqref{5df_1}, we can rewrite \eqref{Ident2} as
\begin{align*}
\sum_{n=0}^\infty\lambda_{10}(n)q^n-\sum_{n=0}^\infty\lambda_{10}^\prime(n)q^n&=-4q\dfrac{f_{20}^3f_{100}}{f_{10}^4}\left(\dfrac{1}{R(q^{20})}-q^4-q^8R(q^{20})\right).
\end{align*}
Equating the coefficients of $q^{5n+2}$ and $q^{5n+3}$ from both sides of the identity above, we arrive at $\lambda_{10}(5n+r)=\lambda_{10}^\prime(5n+r)$ for $r\in\{2,3\}$. Thus, we
complete the proof of (xi) as well as Theorem \ref{Theo5_1}.\qed

\section{Proof of Theorem \ref{Theo8_1}}\label{sec6}

Proofs of the identities in Theorem \ref{Theo8_1} are similar in nature. Therefore, we prove only (xv)--(xix). We set
\begin{align*}
P_1&:=\sum_{n=0}^{\infty}\rho_{5,1}(n)q^n+q^{4}\sum_{n=0}^{\infty}\rho_{5,1}^{\prime}(n)q^n\\
&=\dfrac{S\left(-q\right)T\left(q\right)S\left(q^4\right)}{S\left(q\right)T\left(-q\right)T\left(q^4\right)}+q^4\dfrac{S\left(q\right)T\left(-q\right)T\left(q^4\right)}{S\left(-q\right)T\left(q\right)S\left(q^4\right)},\\
P_2&:=\sum_{n=0}^{\infty}\rho_{5,1}(n)q^n-q^{4}\sum_{n=0}^{\infty}\rho_{5,1}^{\prime}(n)q^n\\
&=\dfrac{S\left(-q\right)T\left(q\right)S\left(q^4\right)}{S\left(q\right)T\left(-q\right)T\left(q^4\right)}-q^4\dfrac{S\left(q\right)T\left(-q\right)T\left(q^4\right)}{S\left(-q\right)T\left(q\right)S\left(q^4\right)},\\
P_3&:=\sum_{n=0}^{\infty}\rho_{5,2}(n)q^n+q^{8}\sum_{n=0}^{\infty}\rho_{5,2}^{\prime}(n)q^n\\
&=\dfrac{S^2\left(-q\right)T^2\left(q\right)S^2\left(q^4\right)}{S^2\left(q\right)T^2\left(-q\right)T^2\left(q^4\right)}+q^8\dfrac{S^2\left(q\right)T^2\left(-q\right)T^2\left(q^4\right)}{S^2\left(-q\right)T^2\left(q\right)S^2\left(q^4\right)}
\end{align*}

Using \eqref{ModularIdentityST1}, \eqref{STf}, and the first identity of Lemma \ref{Lemma-2-dissection}, we have
\begin{align*}
P_1&=\dfrac{\left(S\left(-q\right)T\left(q\right)S\left(q^4\right)-q^2  S\left(q\right)T\left(-q\right)T\left(q^4\right)\right)^2}{S\left(q\right)T\left(q\right)S\left(-q\right)T\left(-q\right)S\left(q^4\right)T\left(q^4\right)}+2q^2
=2q^2+\dfrac{f_4^2 f_{16}^8}{f_2 f_8^7 f_{32}^4}\cdot f_1^2\\
&=2q^2+\dfrac{f_{16}^6}{f_8^2 f_{32}^4}- 2q\dfrac{f_4^2 f_{16}^{10}}{f_8^8}.
\end{align*}
Therefore, it follows that $U_{4n+6}(P_1)=0$ and $U_{8n+4}(P_1)=0$, which are equivalent to (xv) and (xvii), respectively.

Next, using \eqref{ModularIdentityST1}, \eqref{ModularIdentityST2}, \eqref{STf}, and the first identity of Lemma \ref{Lemma-2-dissection}, we find that
\begin{align}\label{P2}
P_2&=\dfrac{\left(S\left(-q\right)T\left(q\right)S\left(q^4\right)\right)^2-q^4  \left(S\left(q\right)T\left(-q\right)T\left(q^4\right)\right)}
{S(q)T(q)S(-q)T(-q)S(q^4)T(q^4)}\notag\\
&=\left(\dfrac{f_4 f_{16}^6}{f_2^3 f_8^2 f_{32}^4}+4q^3\dfrac{f_4^3 f_{16}^4 }{f_2^3 f_8^6}\right)\cdot f_1^2\notag\\
&=\left(\dfrac{f_4 f_{16}^6}{f_2^3 f_8^2 f_{32}^4}+4q^3\dfrac{f_4^3 f_{16}^4 }{f_2^3 f_8^6}\right)\left(\frac{f_{2} f_{8}^5}{f_{4}^2 f_{16}^2}- 2q\frac{ f_{2} f_{16}^2}{f_{8}}\right).
\end{align}
Therefore,
\begin{align*}
U_{2n+1}(P_2)=\left(-\frac{2 f_2 f_8^8}{f_4^3 f_{16}^4}+4q\frac{ f_2 f_8^2 }{f_4}\right) \dfrac{1}{f_1^2}.
\end{align*}
Employing the $2$-dissection of $1/f_1^2$ from Lemma \ref{Lemma-2-dissection} in the above identity, we find that
\begin{align}
\label{GoOne}U_{2n+1}(P_2)=-2\frac{ f_8^{13}}{f_2^4 f_4^3 f_{16}^6}+8q^2\frac{f_4 f_8 f_{16}^2 }{f_2^4},
\end{align}
 which gives $U_{4n+7}(P_2)=0$, which is equivalent to (xvi).

From \eqref{GoOne}, we also have
\begin{align*}
U_{4n+1}(P_2)=-2\frac{f_4^{13}}{f_1^4 f_2^3 f_{8}^6}+8q\frac{f_2 f_4 f_{8}^2 }{f_1^4},
\end{align*}
which can be rewritten using the $2$-dissection of $1/f_1^4$ in Lemma \ref{Lemma-2-dissection}  as
\begin{align*}
U_{4n+1}(P_2)=-2\frac{f_4^{27}}{f_2^{17} f_8^{10}}+32q^2\frac{f_4^3 f_8^6 }{f_2^9}.
\end{align*}
From the above identity, we have $U_{8n+5}(P_2)=0$, which is equivalent to (xviii).

Now, we prove (xix). Using the expression for $P_2$ from \eqref{P2}, we have
\begin{align*}
P_3=P_2^2+2q^4=\dfrac{f_1^4 f_4^2 f_{16}^{12}}{f_2^6 f_8^4 f_{32}^8}+16q^6\dfrac{ f_1^4 f_4^6 f_{16}^8 }{f_2^6 f_8^{12}}+8q^3\dfrac{ f_1^4 f_4^4 f_{16}^{10} }{f_2^6 f_8^8 f_{32}^4}+2 q^4.
\end{align*}
Applying the 2-dissection of $f_1^4$ from Lemma \ref{Lemma-2-dissection} in the identity above, and then applying the extraction operator $U_{2n}$, we find that
\begin{align*}
U_{2n}(P_3)=\dfrac{f_8^{12} f_2^{12}}{f_1^8 f_4^8 f_{16}^8}+2 q^2-32q^2\dfrac{f_8^{10} f_2^2 }{f_1^4 f_4^4 f_{16}^4}+16q^3\dfrac{f_8^8 f_2^{16} }{f_1^8 f_4^{16}}.
\end{align*}
Again, invoking the 2-dissection of $1/f_1^4$ from Lemma \ref{Lemma-2-dissection} in the identity above, and then extracting the terms involving $q^{2n}$, we obtain
\begin{align*}
U_{4n}(P_3)=\frac{f_4^4 f_2^{20}}{f_1^{16} f_8^8}+2 q+16q\frac{f_4^{20} }{f_1^8 f_2^4 f_8^8}-32q\frac{ f_4^6 f_2^{10} }{f_1^{12} f_8^4}+128 q^2\frac{ f_4^8}{f_1^8},
\end{align*}
which, with the help of the 2-dissection of $1/f_1^4$ from Lemma \ref{Lemma-2-dissection}, can be rewritten as
\begin{align*}
U_{4n}(P_3)=\frac{f_4^{60}}{f_2^{36} f_8^{24}}+2 q-32q^2\frac{f_4^{36} }{f_2^{28} f_8^8}+256q^4\frac{f_8^8 f_4^{12} }{f_2^{20}}.
\end{align*}
Therefore, $U_{8n+12}(P_3)=0$, which is equivalent to (xix).

The remaining results in Theorem \ref{Theo8_1} can be proved in a similar fashion by using some modular relations between $S(q)$ and $T(q)$ found by Huang \cite{H1} and Xia and Yao \cite{XY1} that are analogous to \eqref{ModularIdentityST1} and \eqref{ModularIdentityST2}. In the following table, we mention the locations of the corresponding relations used to prove the results.

\begin{align*}
\begin{tabular}{c c}
 \hline
  Results in Theorem \ref{Theo8_1}  & Used identities analogous to  \eqref{ModularIdentityST1} and \eqref{ModularIdentityST2}\T\B\\  \hline
    (i)--(vi) & \cite[(2.1), (2.2)]{H1}\T\B\\
     (vii)--(x) & \cite[(2.5), (2.7), (2.3),(2.9)]{H1} \T\B\\
 (xi)--(xiii) & \cite[(2.4), (2.6), (2.10)]{H1} \T\B\\
  (xiv) & \cite[(2.8)]{XY1}\T\B\\
  (xx) & \cite[(2.9)]{XY1} \T\B\\
   (xxi) & \cite[(2.10)]{XY1} \T\B\\
    (xxii) & \cite[(2.11)]{XY1} \T\B\\
     (xxiii) & \cite[(2.12)]{XY1} \T\B\\\hline
\end{tabular}
\end{align*}\qed

\section{Proof of Theorem \ref{Theo12_1}}\label{sec7}
 Since the proofs of the results in Theorem \ref{Theo12_1} are similar in nature, we choose to prove (xv) and (xvi) only,  which are somewhat trickier to prove than the others.
We set
\begin{align}
\label{Lambda32}\Xi_{3,2}&:=\sum_{n=0}^{\infty}\xi_{3,2}(n)q^n-q^8\sum_{n=0}^{\infty}\xi_{3,2}^{\prime}(n)q^n
=\dfrac{X^2\left(q\right)Y^2\left(q^3\right)}{X^2\left(q^3\right)Y^2\left(q\right)}-q^8\dfrac{X^2\left(q^3\right)Y^2\left(q\right)}{X^2\left(q\right)Y^2\left(q^3\right)}.
\end{align}
To prove (xv) and (xvi), it is enough to show that $U_{24n+14}\left(\Xi_{3,2}\right)=0$ and \linebreak $U_{36n+25}\left(\Xi_{3,2}\right)=0$, respectively.

We have
\begin{align*}
\Xi_{3,2}&=\dfrac{\left(X^2\left(q\right)Y^2\left(q^3\right)-q^4X^2\left(q^3\right)Y^2\left(q\right)\right)}{X^2\left(q\right)Y^2
\left(q\right)X^2\left(q^3\right)Y^2\left(q^3\right)}\\
&\quad\times\Big(\left(X\left(q\right)Y\left(q^3\right)\mp q^2X\left(q^3\right)Y\left(q\right)\right)^2 \pm 2q^2X\left(q\right)Y\left(q\right)X\left(q^3\right)Y\left(q^3\right)\Big).
\end{align*}

Using \eqref{Xq}, \eqref{Yq}, and \eqref{jtpi}, it is easy to show that
\begin{align}\label{XYq}X(q)Y(q)=\dfrac{f_{12}^2f_6}{f_1f_2f_3}.
\end{align}

Employing \eqref{BBXY1}, \eqref{RobinsXY1}, and \eqref{XYq} in \eqref{Lambda32}, we find that
\begin{align}
\label{First12Gen}\Xi_{3,2}&=\frac{ f_4 f_6^5 f_{18}^3}{ f_{12}^6 f_{36}^3}\cdot\frac{f_1^2}{f_3^2}+2q^2\frac{ f_4 f_6^5 f_9}{f_2 f_3^2 f_{12}^4 f_{36}}\cdot\frac{f_1}{f_3}\cdot\frac{f_9}{f_3},\\
\label{Second12Gen}\Xi_{3,2}&=\frac{ f_6^{15} f_9^4}{ f_3^6 f_{12}^{10} f_{18}^3 f_{36}}\cdot\dfrac{f_1^2 f_4^2}{f_2^2}\cdot \dfrac{f_4}{f_2^2}-2q^2\frac{f_6^5 f_9}{ f_3^2 f_{12}^4 f_{36}}\cdot\dfrac{f_1 f_4}{f_2}.
\end{align}

For convenience, we use \eqref{First12Gen} and \eqref{Second12Gen} to prove (xv) and (xvi), respectively. First we prove (xv).

Invoking the 2-dissections of $f_1^2/f_3^2$, $f_1/f_3$, and $f_3/f_1$ from Lemma \ref{Lemma-2-dissection} in \eqref{First12Gen} and then extracting the terms involving the even powers of $q$, we find that
\begin{align*}
U_{2n}(\Xi_{3,2})&=\frac{f_1 f_2^3 f_9^3}{f_4 f_6^2 f_{12} f_{18}^3}+2q\frac{f_2 f_3 f_8 f_9 f_{12} f_{36}^2}{f_4 f_6^3 f_{18}^2 f_{72}}-2q^3\frac{ f_3 f_4^2 f_9 f_{12} f_{72} }{f_6^3 f_8 f_{18} f_{36}}.
\end{align*}
 From the above identity, employing the suitable $2$- and $3$-dissections from Lemmas \ref{Lemma-2-dissection} and \ref{Lemma-3-dissection}, we successively find  $U_{6n+2}(\Xi_{3,2})$ and $U_{12n+2}(\Xi_{3,2})$. Extracting the terms involving the odd powers of $q$ in the expression for  $U_{12n+2}(\Xi_{3,2})$, we obtain
\begin{align}
\label{U24n+14}U_{24n+14}(\Xi_{3,2})&=-6\frac{f_2^4 f_3^3}{f_1^7}+4\frac{f_4^4 f_6^{11}}{f_1^4 f_2 f_3^6 f_{12}^4}+2\frac{f_2^8 f_6^8}{f_1^7 f_3^5 f_4^2 f_{12}^2}+8q\frac{ f_2^2 f_4^2 f_6^2 f_{12}^2}{f_1^5 f_3^3}\notag\\
&\quad+4q\frac{f_2^{11} f_{12}^4}{f_1^8 f_3^2 f_4^4 f_6}\notag\\
&=-6\frac{f_2^4 f_3^3}{f_1^7}+\left(4\frac{f_2^2f_4^3 f_6^{2}}{f_1^5  f_3^3 f_{12}}+2\frac{f_2^{11} f_{12}}{f_1^8 f_3^2 f_4^3 f_{6}}\right)\left(\dfrac{f_1f_4f_{6}^9}{f_{2}^3f_3^3f_{12}^3}+2q\dfrac{f_{12}^3}{f_4}\right).
\end{align}

Now,  the eighth identity of Lemma \ref{Lemma-2-dissection} is
\begin{align}\label{f33}
\frac{f_{3}^3}{f_1}=\frac{f_{4 }^3 f_{6 }^2}{f_{2 }^2 f_{12 }}+q\frac{f_{12 }^3 }{f_{4}}.
\end{align}
Replacing $q$ by $-q$ in \eqref{f33} and then subtracting the resulting identity from \eqref{f33}, it follows that
\begin{align*}
\dfrac{f_3^3}{f_1}-\dfrac{f^3(q^3)}{f(q)}=2q\dfrac{f_{12}^3}{f_4},
\end{align*}
which, by \eqref{fq}, reduces to
\begin{align}
\label{TrickIden1}\dfrac{f_3^3}{f_1}=\dfrac{f_1f_4f_{6}^9}{f_{2}^3f_3^3f_{12}^3}+2q\dfrac{f_{12}^3}{f_4}.
\end{align}
Employing \eqref{TrickIden1} in \eqref{U24n+14}, we have
\begin{align}
\label{U24n+14a}U_{24n+14}(\Xi_{3,2})&=-6\frac{f_2^4 f_3^3}{f_1^7}+4\frac{f_2^2f_4^3 f_6^{2}}{f_1^6   f_{12}}+2\frac{f_2^{11} f_{12}}{f_1^6  f_4^3 f_{6}}\cdot\dfrac{f_3}{f_1^{3}}.
\end{align}
We now apply the 2-dissection of $f_3/f_1^3$ from Lemma \ref{Lemma-2-dissection} in \eqref{U24n+14a}, and then use \eqref{f33} to obtain
\begin{align*}
U_{24n+14}(\Xi_{3,2})&=-6\frac{f_2^4 f_3^3}{f_1^7}+4\frac{f_2^2f_4^3 f_6^{2}}{f_1^6   f_{12}}+2\frac{f_2^{11} f_{12}}{f_1^6  f_4^3 f_{6}}\left(\dfrac{f_4^6f_6^3}{f_2^9f_{12}^2}+3q\dfrac{f_4^2f_6f_{12}^2}{f_2^7}\right)\\
&=-6\frac{f_2^4 f_3^3}{f_1^7}+6\dfrac{f_2^4}{f_1^6}\left(\dfrac{f_4^3f_6^2}{f_2^2f_{12}}+q\dfrac{f_{12}^3}{f_4}\right),\\
&=-6\frac{f_2^4 f_3^3}{f_1^7}+6\frac{f_2^4 f_3^3}{f_1^7}=0.
\end{align*}
This completes the proof of (xv).

Now, we prove (xvi). Employing the 3-dissections of $f_1 f_4/f_2$ and $f_2/f_1^2$ from Lemma \ref{Lemma-3-dissection} in \eqref{Second12Gen} and then extracting the terms involving $q^{3n+1}$ and finally using \eqref{TrickIden1}, we have
\begin{align}
\label{36n+25Trick1}U_{3 n+1}(\Xi_{3,2})&=-2\frac{f_2^5 f_3^3 f_6^7}{f_1^5 f_4^5 f_{12}^5}+4q\frac{f_6^7 f_2^5}{f_1^4 f_4^6 f_{12}^2}+2q\frac{f_3^6 f_{12} f_2^8}{f_1^6 f_4^7 f_6^2}\notag\\
&=-2\frac{f_2^5 f_6^7}{f_1^4 f_4^5 f_{12}^5}\left(\dfrac{f_3^3}{f_1}-2q\dfrac{f_{12}^3}{f_4}\right)+2q\frac{f_3^6 f_{12} f_2^8}{f_1^6 f_4^7 f_6^2}\notag\\
&=-2\frac{f_2^2 f_6^{16}}{f_1^3 f_3^3 f_4^4 f_{12}^8}+2q\frac{f_3^6 f_{12} f_2^8}{f_1^6 f_4^7 f_6^2}
=-2\dfrac{f_2^8f_6^{16}}{f_1^6f_3^3f_4^7f_{12}^8}\left(\dfrac{f_1^3f_4^3}{f_2^6}-q\dfrac{f_{3}^9f_{12}^9}{f_6^{18}}\right).
\end{align}

Setting $\chi(-q):=(q;q^2)_\infty=f_1/f_2$ so that $\chi(q)=f_2^2/(f_1f_4)$ and $\chi(q)\chi(-q)=\chi(-q^2)$, we have
\begin{align}
\label{36n+25Trick2}\dfrac{f_1^3f_4^3}{f_2^6}-q\dfrac{f_{3}^9f_{12}^9}{f_6^{18}}&=\dfrac{\chi^3\left(-q\right)}{\chi^3\left(-q^2\right)}-q \dfrac{\chi^9\left(-q^3\right)}{\chi^9\left(-q^6\right)}
=\dfrac{\chi^3\left(-q\right)}{\chi^3\left(-q^2\right)}\left(1-q\dfrac{\chi^3\left(q\right)}{\chi^9\left(q^3\right)}\right).
\end{align}

Now, from \cite[p. 345, Entry 1(i)]{B1},
\begin{align}
\label{36n+5Trick3}1-q \dfrac{\chi^3\left(q\right)}{\chi^9\left(q^3\right)}=\dfrac{\chi^3\left(q\right)\psi^4\left(-q\right)}{\chi^9\left(q^3\right)\psi^4\left(-q^3\right)}=\frac{f_1 f_2^2 f_3^5 f_4 f_{12}^5}{f_6^{14}}.
\end{align}
From \eqref{36n+5Trick3}, \eqref{36n+25Trick2}, and \eqref{36n+25Trick1}, it follows that
\begin{align*}
U_{3 n+1}(\Xi_{3,2})=-2\frac{f_2^4 f_6^2 f_3^2}{f_1^2 f_4^3 f_{12}^3}.
\end{align*}
Similarly, applying the appropriate $2$- and $3$-dissections from Lemmas \ref{Lemma-2-dissection} and  \ref{Lemma-3-dissection}, it is routine to find $U_{9 n+7}(\Xi_{3,2}), $ $U_{18 n+7}(\Xi_{3,2}),$ and $U_{36 n+25}(\Xi_{3,2})$. Accordingly, we arrive at
\begin{align}
\label{The36n+5Target}U_{36 n+25}(\Xi_{3,2})&=32\frac{f_2^{10} f_3^7}{f_1^{15} f_6^2}+96 q\frac{f_2^2f_3^3 f_6^6 }{f_1^{11}}+16\frac{f_2^9 f_3^4 f_6^5}{f_1^{14} f_4^2 f_{12}^2}+64 q\frac{f_2^3f_3^6 f_4^2 f_{12}^2 }{f_1^{12} f_6}\notag\\
&\quad-48\frac{f_2^{12} f_6^{20}}{f_1^{19} f_3^5 f_{12}^8}+48q \frac{f_2^{24}f_6^8 }{f_1^{23} f_3 f_4^8}+768q^2\frac{f_4^8 f_6^8 }{f_1^{15} f_3}-768q^3\frac{f_2^{12} f_3^3 f_{12}^8 }{f_1^{19} f_6^4}.
\end{align}
Now, we proceed to show that $U_{36 n+25}(\Xi_{3,2})=0$.

From \eqref{36n+5Trick3} and \cite[pp. 345--346, Entries 1(iv) and (v)]{B1}, we have
\begin{align}
\label{36n+25Trick5}\dfrac{\psi^4\left(q\right)}{\psi^4\left(q^3\right)}+3q=4q+\dfrac{\chi^9\left(-q^3\right)}{\chi^3\left(-q\right)}=a(q)\dfrac{f_2^2f_3^3}{f_1f_6^6},
\end{align}
where $a(q)$ is the cubic theta function defined as $a(q):=\sum_{j,k=-\infty}^{\infty}q^{j^2+jk+k^2}$.

Therefore, with the help of \eqref{36n+25Trick5}, the first two terms of the right side of \eqref{The36n+5Target} become
\begin{align}
\label{FirstTwo}32\frac{f_2^{10} f_3^7}{f_1^{15} f_6^2}+96 q\frac{f_2^2f_3^3 f_6^6 }{f_1^{11}}
&=32 \dfrac{f_2^2f_3^3f_6^6}{f_1^{11}}\left(\dfrac{\psi^4(q)}{\psi^4\left(q^3\right)}+3q\right)
=32a(q)\dfrac{f_2^4f_3^6}{f_1^{12}}.
\end{align}

Now, from \cite[p. 93]{B5} and \cite[p. 232]{B1}, we recall that
\begin{align*}
a(q)&=\varphi(q)\varphi(q^3)+4q\psi(q^2)\psi(q^6),\\
4q\psi(q^2)\psi(q^6)&=\varphi(q)\varphi(q^3)-\varphi(-q)\varphi(-q^3),
\end{align*}
which can be written in the equivalent forms
\begin{align}
\label{a1inPhiPsi}a(q)&=\dfrac{f_2^5f_6^5}{f_1^2f_4^2f_3^2f_{12}^2}+4q\dfrac{f_4^2f_{12}^2}{f_2f_6},\\
\label{OddpartPhi1Phi3}4q\dfrac{f_4^2f_{12}^2}{f_2f_6}&=\dfrac{f_2^5f_6^5}{f_1^2f_4^2f_3^2f_{12}^2}-\dfrac{f_1^2f_{3}^2}{f_2f_6}.
\end{align}
Using \eqref{a1inPhiPsi}, the combined third and fourth terms of \eqref{The36n+5Target} become
\begin{align}
\label{SecondTwo}16\frac{f_2^9 f_3^4 f_6^5}{f_1^{14} f_4^2 f_{12}^2}+64 q\frac{f_2^3f_3^6 f_4^2 f_{12}^2 }{f_1^{12} f_6}&=16\dfrac{f_2^4f_3^6}{f_1^{12}}\left(\dfrac{f_2^5f_6^5}{f_1^2f_4^2f_3^2f_{12}^2}+4q\dfrac{f_4^2f_{12}^2}{f_2f_6}\right)\notag\\
&=16a(q)\dfrac{f_2^4f_3^6}{f_1^{12}}.
\end{align}

Again, with the aid of \eqref{a1inPhiPsi} and \eqref{OddpartPhi1Phi3}, the combined fifth and seventh terms of \eqref{The36n+5Target} can be simplified as
\begin{align}
\label{FifthSeventh}&-48\frac{f_2^{12} f_6^{20}}{f_1^{19} f_3^5 f_{12}^8}+768q^2\frac{f_4^8 f_6^8 }{f_1^{15} f_3}\notag\\
&=-48\dfrac{f_2^2f_4^4f_6^{10}}{f_1^{15}f_3f_{12}^4}
\left(\dfrac{f_2^5f_6^5}{f_1^2f_4^2f_3^2f_{12}^2}-4q\dfrac{f_4^2f_{12}^2}{f_2f_6}\right)\cdot\left(\dfrac{f_2^5f_6^5}{f_1^2f_4^2f_3^2f_{12}^2}+4q\dfrac{f_4^2f_{12}^2}{f_2f_6}\right)\notag\\
&=-48a(q)\dfrac{f_2f_3f_4^4f_6^{9}}{f_1^{13}f_{12}^4}.
\end{align}

Similarly, the combination of the sixth and the last terms of \eqref{The36n+5Target} becomes
\begin{align}
\label{SixthLast}48q \frac{f_2^{24}f_6^8 }{f_1^{23} f_3 f_4^8}-768q^3\frac{f_2^{12} f_3^3 f_{12}^8 }{f_1^{19} f_6^4}=48a(q)q\dfrac{f_2^{13}f_3^5f_{12}^4}{f_1^{17}f_6^3f_{4}^4}.
\end{align}

Employing \eqref{FirstTwo}, \eqref{SecondTwo}, \eqref{FifthSeventh}, and \eqref{SixthLast} in \eqref{The36n+5Target}, we find that
\begin{align*}
U_{36 n+25}(\Xi_{3,2})&=
48a(q)\dfrac{f_2^4f_3^6}{f_1^{12}}-48a(q)\dfrac{f_2f_3f_4^4f_6^{9}}{f_1^{13}f_{}12^{4}}-48qa(q)\dfrac{f_2^{13}f_3^5f_{12}^4}{f_1^{17}f_{4}^4f_6^3}.
\end{align*}
For further simplification, we rewrite the above identity as
\begin{align}
\label{36n+25ReducedTarget}U_{36 n+25}(\Xi_{3,2})&=48a(q)\dfrac{f_2^4f_3^6}{f_1^{12}}-48a(q)\dfrac{f_2^3f_3^5}{f_1^{13}f_6}\left(\psi^2\left(q^2\right)\varphi^2\left(q^3\right)-q\varphi^2\left(q\right)\psi^2\left(q^6\right)\right).
\end{align}

Now, from \cite[(3.4), (3.9)]{BBarman1}, we recall that
\begin{align}
\label{LastTrick}\varphi^2\left(q\right)+\varphi^2\left(q^3\right)&=2\varphi^2\left(-q^6\right)\dfrac{\chi\left(q\right)\psi\left(-q^3\right)}{\chi\left(-q\right)\psi\left(q^3\right)},\\
\psi^2\left(q^2\right)-q\psi^2\left(q^6\right)&=\dfrac{\varphi\left(q^3\right)\psi\left(q^3\right)}{\chi\left(q\right)\chi\left(-q^2\right)}\notag.
\end{align}
Multiplying the above two identities and then using the identities $\varphi(q)\psi(q^2)=\psi^2(q)$, $\varphi\left(-q\right)\varphi\left(q\right)=\varphi^2\left(-q^2\right)$, $\chi\left(-q\right)\chi\left(q\right)=\chi\left(-q^2\right)$, and \eqref{36n+25Trick5}, we find that
\begin{align*}
&\psi^2\left(q^2\right)\varphi^2\left(q^3\right)-q\varphi^2\left(q\right)\psi^2\left(q^6\right)\\
&=2\dfrac{\varphi^2\left(-q^6\right)\varphi\left(q^3\right)\psi\left(-q^3\right)}{\chi\left(-q\right)\chi\left(-q^2\right)}-\left(\varphi^2\left(q\right)\psi^2\left(q^2\right)-q\varphi^2\left(q^3\right)\psi^2\left(q^6\right)\right)\\
&=2\dfrac{\varphi^2\left(-q^6\right)\varphi\left(q^3\right)\psi\left(-q^3\right)}{\chi\left(-q\right)\chi\left(-q^2\right)}-\left(\psi^4\left(q\right)-q\psi^4\left(q^3\right)\right)\\
&=2\dfrac{\varphi^2\left(q^3\right)\varphi\left(-q^3\right)\psi\left(-q^3\right)}{\chi^2\left(-q\right)
\chi\left(q\right)}-\dfrac{\chi^9\left(-q^3\right)\psi^4\left(q^3\right)}{\chi^3\left(-q\right)}\\
&=\dfrac{f_3^4}{\chi^2\left(-q\right)\chi^2\left(-q^3\right)}\left(2\dfrac{\chi^3\left(q^3\right)}{\chi\left(q\right)}-\dfrac{\chi^3\left(-q^3\right)}{\chi\left(-q\right)}\right)\\
&=2\frac{ f_4 f_6^8}{f_1 f_3 f_{12}^3}-\frac{f_2^3 f_3^5}{f_1^3 f_6}.
\end{align*}
Using the above identity in \eqref{36n+25ReducedTarget}, and then applying \eqref{LastTrick} with $q$ replaced by $-q$, we have
\begin{align*}
U_{36 n+25}(\Xi_{3,2})&=
48a(q)\dfrac{f_2^4f_3^6}{f_1^{12}}\left(1+\dfrac{\varphi^2\left(-q^3\right)}{\varphi^2\left(-q\right)}-2\dfrac{f_4f_6^7}{f_1^{2}f_2f_3^2f_{12}^3}\right)\\
&=48a(q)\dfrac{f_2^4f_3^6}{f_1^{12}}\left(2\dfrac{f_4f_6^7}{f_1^{2}f_2f_3^2f_{12}^3}-2\dfrac{f_4f_6^7}{f_1^{2}f_2f_3^2f_{12}^3}\right)=0.
\end{align*}
Thus, we complete the proof of (xvi) of Theorem \ref{Theo12_1}.

For the remaining identities in Theorem \ref{Theo12_1}, we cite  the appropriate identities used to prove them in the following chart.
\begin{align*}
\begin{tabular}{c c}
 \hline
  Results in  Theorem \ref{Theo12_1}  & Used identities, like, \eqref{BBXY1} and \eqref{RobinsXY1} \T\B\\  \hline
    (i)--(v) & \cite[ Chapter 1, (1.34), (1.35)]{R1}\T\B\\
     (vi)--(x) & \cite[(30)]{BB1} and \eqref{newXY1} of Lemma \ref{NewlemmaXY}\T\B\\
 (xi)--(xiv) & \eqref{BBXY1} and \eqref{RobinsXY1} of Lemma \ref{NewlemmaXY}  \T\B\\
  (xvii)--(xxii) & \cite[(33)]{BB1} and \eqref{newXY2} of Lemma \ref{NewlemmaXY}\T\B\\\hline
\end{tabular}
\end{align*}
\qed

\section{Concluding remarks}\label{sec8}
In this paper, we present several $q$-products having matching coefficients with their reciprocals.
There might be many  more such $q$-products. Some interesting conjectures based on empirical observations are given below.
\begin{conjecture}
Suppose that
\begin{align*}
&\sum_{n=0}^{\infty}\lambda_{11}(n)q^n=\dfrac{1}{R(q)R\left(q^{14}\right)}, \quad \sum_{n=0}^{\infty}\lambda_{12}(n)q^n=\dfrac{1}{R(q)R\left(q^{2}\right)R\left(q^{4}\right)R\left(q^{8}\right)},\\
&\sum_{n=0}^{\infty}\lambda_{13}(n)q^n=\dfrac{R(q)R\left(q^{2}\right)}{R\left(q^{6}\right)R\left(q^{12}\right)}, \quad \sum_{n=0}^{\infty}\lambda_{14}(n)q^n=\dfrac{R(-q)R\left(q^{2}\right)}{R\left(-q^{6}\right)R\left(q^{12}\right)}.
\end{align*}
Then, for any $n\geq0$, we have
\begin{align*}
\lambda_{11}(10n+r)&=\lambda_{11}^{\prime}(10n+r-6),\quad r \in \{7,9,12,14\},\\
\lambda_{12}(10n+r)&=\lambda_{12}^{\prime}(10n+r-6),\quad r\in \{7,9\},\\
\lambda_{12}(20n+11)&=\lambda_{12}^{\prime}(20n+5),\\
\lambda_{13}(30n+r)&=\lambda_{13}^{\prime}(30n+r-6),\quad r\in \{10,16,20,26\},\\
\lambda_{14}(30n+r)&=\lambda_{14}^{\prime}(30n+r-6),\quad r\in \{7,17,19,29\}
\end{align*}
\end{conjecture}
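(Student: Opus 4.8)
The plan is to handle each of $\lambda_{11},\dots,\lambda_{14}$ by the template that already proves parts (vi), (viii), and (ix) of Theorem~\ref{Theo5_1}. For each product $P$ with reciprocal $P^{\ast}$ I would form the signed combination $\Lambda:=P-q^{6}P^{\ast}$, so that the asserted matching $\lambda(mn+r)=\lambda^{\prime}(mn+r-6)$ is exactly the statement $U_{mn+r}(\Lambda)=0$. I would then pass to the Rogers--Ramanujan functions through $R=H/G$ and \eqref{GHf5}, put $\Lambda$ over a common denominator, and factor the numerator as a difference of two squares. For $\lambda_{11}$ this reads
\begin{align*}
\Lambda_{11}&=\dfrac{1}{R(q)R(q^{14})}-q^{6}R(q)R(q^{14})\\
&=\dfrac{f_{1}f_{14}}{f_{5}f_{70}}\Bigl(G(q)G(q^{14})-q^{3}H(q)H(q^{14})\Bigr)\Bigl(G(q)G(q^{14})+q^{3}H(q)H(q^{14})\Bigr),
\end{align*}
where I have used \eqref{GHf5} on the denominator; the goal becomes $U_{10n+r}(\Lambda_{11})=0$ for $r\in\{7,9,12,14\}$.

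The decisive input is therefore a pair of modular relations $G(q)G(q^{14})\pm q^{3}H(q)H(q^{14})=(\text{eta-quotient})$. One of these belongs to Ramanujan's forty identities (the $(1,14)$ pair), and the companion I would derive exactly as \eqref{newGH1}--\eqref{newGH3} are derived in Lemma~\ref{NewLemmaGH}: specialize $a,b,c,d$ in \eqref{ThetaIdentity1}--\eqref{ThetaIdentity3}, combine the resulting theta identities, and read off the $G,H$ quotients from representations such as $f(q,q^{4})=f_{5}G(q)/G(q^{2})$ and their analogues. Once both factors are eta-quotients, $\Lambda_{11}$ is a single eta-quotient and I would finish with the dissection step used for (vi) and (ix): apply the $2$-dissections of Lemma~\ref{Lemma-2-dissection} to isolate the relevant parity part, which collapses to a product of a series in $q^{5}$ with one of $\varphi$, $\psi$, or $f_{1}$; the $5$-dissection \eqref{5df_1}--\eqref{5dissectPsi} of that factor then omits precisely the residues $r\in\{7,9,12,14\}$, forcing the extraction to vanish.

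For $\lambda_{13}$ and $\lambda_{14}$ the same scheme applies once one notes the factorizations $\sum\lambda_{13}(n)q^{n}=F(q)F(q^{2})$ and $\sum\lambda_{14}(n)q^{n}=F(-q)F(q^{2})$, where $F(q):=R(q)/R(q^{6})$. Here the modulus $30=5\cdot 6$ is expected, and the needed inputs are a degree-$6$ cross-type relation for $R(q)/R(q^{6})$ together with its companion (the analogues of \eqref{Rama3} and \eqref{newGH2}); the residues are then separated by the combined $2$-, $3$-, and $5$-dissections of Lemmas~\ref{Lemma-2-dissection}--\ref{5dissectf1phipsi}. The sign changes in $\lambda_{14}$ are accommodated through the $q\mapsto-q$ principle of the Remark together with Watson's $2$-dissection \eqref{2-dissectGH}.

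I expect the genuine obstacle to be $\lambda_{12}=1/\bigl(R(q)R(q^{2})R(q^{4})R(q^{8})\bigr)$, since a four-fold product does not collapse into a single difference of two squares, so the clean ``numerator $=$ product of two modular relations'' mechanism is no longer directly available. The natural route is to exploit the power-of-two structure of the arguments via Watson's $2$-dissection \eqref{2-dissectGH}, peeling $R(q)$ against $R(q^{16})$- and $R(-q^{4})$-type pieces and reducing step by step, or else to group $R(q)R(q^{8})$ with $R(q^{2})R(q^{4})$ and seek a usable relation for each pair. Either way one is forced to discover and prove one or two further modular identities of the Lemma~\ref{NewLemmaGH} type before the dissection bookkeeping, now at the two moduli $10$ and $20$ (the latter being required for $\lambda_{12}(20n+11)=\lambda_{12}^{\prime}(20n+5)$), can be completed. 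Locating and establishing those missing identities, rather than the lengthy but routine dissection computations, is where the real difficulty lies.
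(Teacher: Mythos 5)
There is no proof in the paper to compare against: this statement is posed in Section~\ref{sec8} explicitly as a \emph{conjecture}, supported only by empirical observation, and the authors leave it open. So the question is whether your proposal actually closes the gap, and it does not --- it is a strategy outline in which every load-bearing step is deferred. For $\lambda_{11}$, your difference-of-squares factorization of $\Lambda_{11}$ is algebraically correct, but the argument then rests entirely on the unproved assertion that \emph{both} combinations $G(q)G(q^{14})\pm q^{3}H(q)H(q^{14})$ reduce to eta-quotients and that the resulting $2$- and $5$-dissections ``omit precisely'' the residues in question. Note that here four residue classes modulo $10$ (namely $2,4,7,9$) must vanish, twice as many as in the proven analogues (vi) and (ix) of Theorem~\ref{Theo5_1}, where a single $5$-dissection of $\varphi$ or $\psi$ kills exactly two classes; your sketch does not explain how the extra two classes are to be eliminated, and this is precisely the kind of obstruction that could make the method fail even if the modular relations exist. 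The same criticism applies with more force to $\lambda_{13}$ and $\lambda_{14}$, where the ``degree-$6$ cross-type relation for $R(q)/R(q^{6})$ together with its companion'' is simply postulated; no such relation is exhibited, and the modulus-$30$ bookkeeping (simultaneous $2$-, $3$-, and $5$-dissections hitting four residue classes) is not carried out.

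For $\lambda_{12}$ you candidly acknowledge that the difference-of-squares mechanism breaks down for a four-fold product and that the necessary identities remain to be ``discovered and proved.'' That is an accurate self-assessment, but it means the proposal contains no proof of that case at all. In summary: your plan is methodologically consistent with how the authors prove Theorem~\ref{Theo5_1}, and it is a reasonable research program, but none of the new modular relations it requires are established, none of the dissection computations are performed, and one of the four cases is explicitly left unresolved. The conjecture remains open after your attempt.
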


The following conjectures are due to Schlosser \cite{schlosser}.

\begin{conjecture}\label{schlosser}
Suppose that $ a_{p}$ and $ b_{p}$ are  quadratic residue and non-residue modulo prime $p$, respectively, and
\begin{align*}
\sum_{n=0}^{\infty}\omega_{p}(n)q^n&=\prod_{1\le a_{p},  b_{p}\le p-1} \dfrac{\left(q^{b_{p}};q^{p}\right)_{\infty}}{\left(q^{a_{p}};q^{p}\right)_{\infty}}.
\end{align*}
Then, for any $n\geq0$, we have
\begin{align*}
\omega_{13}(13n+r)&=-\omega_{13}^{\prime}(13n+r-2), & r \in \{&3,6,7,8,9,12\},\\
\omega_{17}(17n+r)&=-\omega_{17}^{\prime}(17n+r-4),&  r \in \{&5,7,8,9,12,13,14,16\},\\
\omega_{29}(29n+r)&=-\omega_{29}^{\prime}(29n+r-6), & r \in \{&7,8,9,10,12,16,19,23,25,26,27,\\
& & &28,31,33\},\\
\omega_{53}(53n+r)&=-\omega_{53}^{\prime}(53n+r-14), & r \in \{&15,19,21,25,26,27,28,29,30,33,\\
& & &34,37,38,39,40,41,42,46,48,52,\\
& & &55,57,58,62,63,65\}.
\end{align*}
\end{conjecture}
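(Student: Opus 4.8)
The plan is to reduce each assertion to the vanishing of a single power series on certain arithmetic progressions, in direct analogy with the proof of Theorem~\ref{Theo5_1}. Writing $\chi(j)=\left(\tfrac{j}{p}\right)$ for the Legendre symbol, we have $\sum_{n\ge0}\omega_p(n)q^n=\prod_{j=1}^{p-1}(q^j;q^p)_\infty^{-\chi(j)}$ and $\sum_{n\ge0}\omega_p'(n)q^n=\prod_{j=1}^{p-1}(q^j;q^p)_\infty^{\chi(j)}$. Every prime appearing in the statement satisfies $p\equiv1\pmod4$, so $\chi(-1)=1$ and $\chi(j)=\chi(p-j)$; moreover there are equally many residues and non-residues in $\{1,\dots,(p-1)/2\}$, whence $\sum_{j=1}^{(p-1)/2}\chi(j)=0$. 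Pairing $j$ with $p-j$ and invoking \eqref{jtpi} therefore collapses each pair into one theta function,
\begin{align*}
\sum_{n\ge0}\omega_p'(n)q^n=\prod_{j=1}^{(p-1)/2}f\!\left(-q^{\,j},-q^{\,p-j}\right)^{\chi(j)}=\frac{P}{N},
\end{align*}
where $P=\prod_{\chi(j)=1}f(-q^{\,j},-q^{\,p-j})$ and $N=\prod_{\chi(j)=-1}f(-q^{\,j},-q^{\,p-j})$ (products over $1\le j\le(p-1)/2$), while $\sum_{n\ge0}\omega_p(n)q^n=N/P$. Setting
\begin{align*}
\Omega_p:=\sum_{n\ge0}\omega_p(n)q^n+q^{\,s}\sum_{n\ge0}\omega_p'(n)q^n=\frac{N}{P}+q^{\,s}\frac{P}{N}=\frac{N^2+q^{\,s}P^2}{f_1\,f_p^{(p-3)/2}},
\end{align*}
the target relation $\omega_p(pn+r)=-\omega_p'(pn+r-s)$ becomes exactly $U_{pn+r}(\Omega_p)=0$. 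A short computation with the second Bernoulli polynomial identifies the forced shift as the generalized Bernoulli quantity $s=\tfrac1p\sum_{j=1}^{(p-1)/2}\chi(j)\,j^2-\sum_{j=1}^{(p-1)/2}\chi(j)\,j$, which evaluates to $2,4,6,14$ for $p=13,17,29,53$, matching the statement.

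The core of the argument is then to produce a closed form for the numerator $N^2+q^{s}P^2$ whose $p$-dissection manifestly omits exactly the $(p-1)/2$ residue classes listed for each $p$. For $p=5$ the relevant products are powers of $R(q)$ (indeed $\omega_5^{\mp1}=R^{\pm1}(q)$), and the required closed forms are the classical Rogers--Ramanujan relations such as \eqref{R5q5}--\eqref{R5q5D}, whose combination with the $5$-dissection \eqref{5df_1} is precisely the mechanism behind Theorem~\ref{Theo5_1}. For general $p$ I would seek the analogous level-$p$ theta identity by feeding the Cui--Gu $p$-dissection \eqref{pdissectf1} of $f_1$ into the addition formulas of Lemma~\ref{ThetaIdentityLemma}, using \eqref{ThetaIdentity2} and \eqref{ThetaIdentity3} to rewrite the symmetric and antisymmetric combinations $N^2\pm q^{s}P^2$ as short sums of theta functions supported on prescribed residues modulo $p$. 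Once such a representation is in hand, applying $U_{pn+r}$ and reading off the vanishing classes finishes each case, just as in Sections~\ref{sec5}--\ref{sec7}.

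Conceptually, the half-and-half split (exactly $(p-1)/2$ of the $p$ classes survive) is the fingerprint of the Atkin--Lehner/Fricke involution $W_p$ acting on the modular function attached to $\omega_p^{-1}$: since $W_p$ interchanges the residue and non-residue products it should send $\omega_p^{-1}\mapsto\omega_p$ up to a multiplier, and the standard interaction of $W_p$ with the dissection operators $U_{pn+r}$ then converts this eigen-relation into matching of coefficients of $\omega_p$ and $\omega_p'$ in shifted progressions. On this picture the minus sign originates from the quadratic Gauss sum in the transformation law of the $\chi$-twisted theta product, while $s$ records the order of $\omega_p^{-1}$ at the cusp $0$, consistent with the Bernoulli formula above.

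The main obstacle is the first step of the core argument. For $p=13,17,29,53$ the explicit theta identities for $N^2+q^{s}P^2$ are not classically available, and extracting them from \eqref{pdissectf1} and Lemma~\ref{ThetaIdentityLemma} is genuinely hard because the number of theta factors in $P$ and $N$ grows like $(p-1)/4$ and the dissections must be iterated through several primes (e.g.\ the $2$-, $3$-, and higher-dissections interleaved as in the proof of Theorem~\ref{Theo12_1}(xvi)). Equivalently, making the $W_p$-eigen-relation effective demands pinning down the exact multiplier and the precise surviving residue set in terms of the quadratic character data, uniformly in $p$. This combinatorial explosion, together with the fact that $(p-1)/2$ separate classes must vanish simultaneously, is exactly why these assertions remain conjectural.
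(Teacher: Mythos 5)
The statement you are addressing is Conjecture \ref{schlosser} of the paper: it is recorded (with attribution to Schlosser) in the concluding remarks precisely because no proof is known, so there is no argument in the paper to compare yours against. Your reduction is sound as far as it goes: for $p\equiv 1\pmod 4$ the pairing $j\leftrightarrow p-j$ together with \eqref{jtpi} and the equinumerosity of residues and non-residues in $\{1,\dots,(p-1)/2\}$ does give $\sum\omega_p'(n)q^n=P/N$ and $\sum\omega_p(n)q^n=N/P$ with $PN=f_1f_p^{(p-3)/2}$; the identity $\omega_p(pn+r)=-\omega_p'(pn+r-s)$ is indeed equivalent to $U_{pn+r}(\Omega_p)=0$ for your $\Omega_p$; and your Bernoulli-type formula for the shift $s$ checks out numerically for $p=13,17,29,53$. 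These are useful structural observations, consistent with how the paper handles the $p=5$ situation in Theorem \ref{Theo5_1}.

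However, the entire content of the conjecture is concentrated in the step you defer: exhibiting a closed form for $N^2+q^{s}P^2$ whose $p$-dissection visibly omits exactly the listed residue classes. For $p=5$ this is supplied by Ramanujan's identities \eqref{R5q5}--\eqref{R5q5D} combined with \eqref{5df_1}; no analogues are known for $p=13,17,29,53$, and neither \eqref{pdissectf1} nor Lemma \ref{ThetaIdentityLemma} produces them by any mechanical procedure --- the addition formulas \eqref{ThetaIdentity2} and \eqref{ThetaIdentity3} combine two theta functions at a time, whereas $P$ and $N$ each contain $(p-1)/4$ factors and their squares many more, and no argument is given that the required cancellations occur. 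The Atkin--Lehner heuristic in your third paragraph plausibly explains why such identities \emph{should} exist, but it is not made effective: no multiplier system is computed and the surviving residue set is not derived from the character data. So what you have is a reasonable plan of attack together with verified bookkeeping, not a proof; as you yourself acknowledge in your final paragraph, the assertions remain conjectural, and nothing in the write-up changes that status.
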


\begin{conjecture}\label{schlosserComposite}
Suppose that
\begin{align*}
\sum_{n=0}^{\infty}\omega_{21}(n)q^n&=\dfrac{\left(q^2,q^8,q^{10},q^{11},q^{13},q^{19};q^{21}\right)_{\infty}}{\left(q,q^4,q^5,q^{16},q^{17},q^{20};q^{21}\right)_{\infty}},\\
\sum_{n=0}^{\infty}\omega_{28}(n)q^n&=\dfrac{\left(q^5,q^{11},q^{13},q^{15},q^{17},q^{23};q^{28}\right)_{\infty}}{\left(q,q^3,q^9,q^{19},q^{25},q^{27};q^{28}\right)_{\infty}},\\
\sum_{n=0}^{\infty}\omega_{77}(n)q^n&=\dfrac{\left(q^2,q^3,q^5,q^8,q^{12},q^{18},q^{20},q^{26},q^{27},
q^{29},q^{30},q^{31},q^{32};q^{77}\right)_{\infty}}{\left(q,q^4,q^6,q^{9},q^{10},q^{13},q^{15},q^{16},
q^{17},q^{19},q^{23},q^{24},q^{25};q^{77}\right)_{\infty}}\\
&\quad \times \dfrac{\left(
q^{34},q^{38},q^{39},q^{43},q^{45},q^{46},q^{47},q^{48},q^{50},q^{51},q^{57};q^{77}\right)_{\infty}}{\left(q^{36},q^{37},q^{40},q^{41},q^{52},q^{53},q^{54},q^{58},q^{60},q^{61},q^{62};q^{77}\right)_{\infty}}\\
&\quad \times \dfrac{\left(
q^{59},q^{65},q^{69},q^{72},q^{74},q^{75};q^{77}\right)_{\infty}}{\left(
q^{64},q^{67},q^{68},q^{71},q^{73},q^{76};q^{77}\right)_{\infty}}.
\end{align*}
Then, for any $n\geq0$, we have
\begin{align*}
\omega_{21}(21 n +r)&=\omega_{21}^{\prime}(21 n +r-4),& r \in \{&4,6,10,11,13,17,18,20,24\},\\
\omega_{28}(56n+r)&=\omega_{28}^{\prime}(56n+r-8),& r \in \{&14,21,38\},\\
\omega_{77}(77n+r)&=\omega_{77}^{\prime}(77n+r-24),& r \in \{&27,28,35,37,48,49,65,70,72,76,\\
& & & 79,83,90,93,98\}.
\end{align*}
\end{conjecture}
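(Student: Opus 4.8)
The plan is to recast each assertion as the vanishing of an extraction operator on a single self-reciprocal combination, and then attack that combination with the theta-dissection machinery used for Theorems \ref{Theo5_1}--\ref{Theo12_1}. A direct check shows that for each modulus the denominator consists of the residues $a$ with $\chi_N(a)=+1$ and the numerator of those with $\chi_N(a)=-1$, where $\chi_N$ is the real quadratic character modulo $N$: explicitly $\chi_{21}=\left(\tfrac{\cdot}{3}\right)\left(\tfrac{\cdot}{7}\right)$ and $\chi_{77}=\left(\tfrac{\cdot}{7}\right)\left(\tfrac{\cdot}{11}\right)$ (both even, attached to $\mathbb{Q}(\sqrt{21})$ and $\mathbb{Q}(\sqrt{77})$), while $\chi_{28}=\left(\tfrac{-28}{\cdot}\right)$ is the odd Kronecker character attached to $\mathbb{Q}(\sqrt{-7})$. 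Hence
\begin{align*}
\sum_{n=0}^\infty\omega_N(n)q^n&=P_N(q):=\prod_{\gcd(m,N)=1}\bigl(1-q^m\bigr)^{-\chi_N(m)},\\
\sum_{n=0}^\infty\omega_N'(n)q^n&=\frac{1}{P_N(q)}=\prod_{\gcd(m,N)=1}\bigl(1-q^m\bigr)^{\chi_N(m)},
\end{align*}
so passing to the reciprocal simply replaces $\chi_N$ by $-\chi_N$. Writing $(s_{21},s_{28},s_{77})=(4,8,24)$ and $(M_{21},M_{28},M_{77})=(21,56,77)$ and setting
\begin{align*}
\Omega_N:=P_N(q)-q^{s_N}\frac{1}{P_N(q)}=\sum_{n}\bigl(\omega_N(n)-\omega_N'(n-s_N)\bigr)q^n,
\end{align*}
each line of the conjecture becomes exactly $U_{M_N n+r}(\Omega_N)=0$ for the listed residues $r$.

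The next step is to produce a closed theta-quotient form for $\Omega_N$ and then dissect it one prime at a time. Factoring $\chi_N$ over the prime divisors of $N$ lets one rewrite $P_N$ through theta building blocks at the prime levels via the Jacobi triple product \eqref{jtpi}, after which the aim is to mimic the proofs of Theorems \ref{Theo5_1} and \ref{Theo12_1}: collapse $\Omega_N$ into a short combination of theta quotients, then apply $U_{an+b}$ successively (a $7$-dissection followed by a $3$-, $11$-, or $2$-dissection), using Lemmas \ref{5dissectf1phipsi}, \ref{Lemma-3-dissection}, and \ref{Lemma-2-dissection}, until the surviving series is manifestly supported off the target progressions. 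Here $P_N$ is a weight-zero modular unit on a congruence group of level dividing a power of $N$ (its logarithmic $q$-derivative being the weight-two Eisenstein series $\sum_n\bigl(\sum_{d\mid n}\chi_N(d)\,d\bigr)q^n$), and the shifts $s_N$ and moduli $M_N$ should be forced by its valuation bookkeeping at the cusps.

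The hard part will be supplying the collapsing identity for $\Omega_N$, that is, the composite-level analogue of the Robins and Baruah--Bora relations \eqref{RobinsKey1}--\eqref{RobinsKey2} and \eqref{BBXY1}--\eqref{RobinsXY1} that made Theorems \ref{Theo5_1} and \ref{Theo12_1} tractable. Structurally one expects $q^{s_N/2}P_N$ and $q^{s_N/2}/P_N$ to be the expansions at the two cusps interchanged by a Fricke/Atkin--Lehner involution $W$, so that the (anti-)invariance of $\Omega_N$ under $W$ is what pushes its Fourier support off the listed classes; converting this into explicit theta identities requires pinning down the exact level, the nebentypus, the eigenvalue of $W$, and the numbers $s_N$. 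I would first settle the prime analogues in Conjecture \ref{schlosser}, where $\chi_N$ is a genuine Legendre symbol and the underlying weight-one dihedral object is classical, and only then splice the prime pieces together through the Chinese Remainder structure of $\chi_N$; I also expect the odd case $N=28$ (imaginary field, modulus $56$) to require an argument separate from the even cases $N=21,77$. Producing these composite-level theta relations is precisely the obstruction that keeps the statement conjectural.
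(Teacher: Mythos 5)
There is no ``paper's own proof'' to compare against here: the statement you are addressing is Conjecture \ref{schlosserComposite}, which the authors attribute to Schlosser (private communication) and explicitly leave open in the concluding remarks. Your submission is likewise not a proof. Its first half is a correct reformulation --- identifying $\sum\omega_N(n)q^n$ with $P_N(q)=\prod_{\gcd(m,N)=1}(1-q^m)^{-\chi_N(m)}$ for a real quadratic character $\chi_N$, and recasting each congruence family as $U_{M_Nn+r}\bigl(P_N(q)-q^{s_N}P_N(q)^{-1}\bigr)=0$ --- but the second half is a programme, not an argument. The entire mathematical content of the conjecture is concentrated in the ``collapsing identity'' for $\Omega_N$ that you defer: the composite-level analogue of \eqref{RobinsKey1}--\eqref{RobinsKey2} or \eqref{BBXY1}--\eqref{RobinsXY1}, or equivalently the precise level, nebentypus, Atkin--Lehner eigenvalue, and cusp-width bookkeeping that would force the shifts $s_N$ and the moduli $M_N$. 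You never produce any of these, and your closing sentence concedes that supplying them ``is precisely the obstruction that keeps the statement conjectural.'' A proof sketch whose one nontrivial step is acknowledged to be missing is a restatement of the problem, not a solution to it.

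One concrete factual error in the setup should also be fixed before this outline is pursued: your identification of $\chi_{28}$ is wrong. The denominator exponents $\{1,3,9,19,25,27\}$ form the cyclic subgroup $\langle 3\rangle$ of $(\mathbb{Z}/28\mathbb{Z})^{*}$, and since $27\equiv-1\pmod{28}$ lies in it, the character is \emph{even}; it is the primitive Kronecker character of conductor $28$ attached to the real field $\mathbb{Q}(\sqrt{7})$, not the odd character of conductor $7$ attached to $\mathbb{Q}(\sqrt{-7})$ (which would put $3$ in the numerator, since $\bigl(\tfrac{-7}{3}\bigr)=-1$). This matters because your proposed dichotomy --- treating $N=28$ separately as the ``imaginary/odd'' case --- rests on a misreading of the data; the genuinely distinctive features of that case are the conductor being even and the progression modulus being $56=2\cdot 28$ rather than $28$, neither of which your outline explains.
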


\section*{Acknowledgements}
The authors would like to thank Dr. Michael J. Schlosser from Universit\"{a}t  Wien, Austria, for his permission to include Conjectures \ref{schlosser} and \ref{schlosserComposite} in this paper. The authors are also thankful to the referee for his/her careful reading and helpful comments. The first author was partially supported by Grant no. MTR/2018/000157 of Science \& Engineering Research Board (SERB), DST, Government of India under the MATRICS scheme. The second author was partially supported by Council of Scientific \& Industrial Research (CSIR), Government of India under CSIR-JRF scheme. The authors thank both the funding agencies.

\end{document}